\newcommand{\executeiffilenewer}[3]{%
 \ifnum\pdfstrcmp{\pdffilemoddate{#1}}%
 {\pdffilemoddate{#2}}>0%
 {\immediate\write18{#3}}\fi%
}
\newcommand{%
 \executeiffilenewer{.svg}{.pdf}%
 {inkscape -z -D --file=.svg %
 --export-pdf=.pdf --export-latex}%
 \input{.pdf_tex}%
}[1]{%
 \executeiffilenewer{#1.svg}{#1.pdf}%
 {inkscape -z -D --file=#1.svg %
 --export-pdf=#1.pdf --export-latex}%
 \input{#1.pdf_tex}%
}
\theoremstyle{plain}
\newtheorem{theo}{Theorem}[section]
\newtheorem{lem}[theo]{Lemma}
\newtheorem*{lem*}{Lemma}
\newtheorem{prop}[theo]{Proposition}
\newtheorem{conj}[theo]{Conjecture}
\newtheorem*{conj*}{Conjecture}
\newtheorem*{theo*}{Theorem}
\theoremstyle{remark}
\newtheorem{rem}{Remark}
\theoremstyle{definition}
\newcommand{\ga}{\gamma}
\newcommand{\Si}{\Sigma}
\newcommand{\al}{\alpha}
\newcommand{\si}{\sigma}
\newcommand{\om}{\omega}
\newcommand{\Ga}{\Gamma}
\newcommand{\la}{\lambda} 
\newcommand{\Om}{\Omega}
\newcommand{\ep}{\varepsilon} 
\newcommand{\Hilb}{{\mathcal{H}}}
\newcommand{\C}{\mathbb C}
\newcommand{\R}{\mathbb R}
\newcommand{\N}{\mathbb N}
\newcommand{\Z}{\mathbb Z}
\newcommand{\T}{\mathbb T}
\newcommand{\su}{\operatorname{SU}_2}
\newcommand{\lu}{\operatorname{su}_2}
\newcommand{\mo}{{\mathcal{M}}}  % espace de module
\newcommand{\Ad}{\operatorname{Ad}} % representation adjointe
\newcommand{\tr}{\operatorname{Tr}}
\newcommand{\id}{\operatorname{id}}
\newcommand{\tor}{\operatorname{Tor}}
\newcommand{\Ci}{{\mathcal{C}}^{\infty}}  %%% Classe C infini 
\newcommand{\End}{\operatorname{End}} 
\newcommand{\CS}{\operatorname{CS}}  % Chern-Simons
\newcommand{\alt}{\operatorname{alt}} 
\newcommand{\ab}{\operatorname{ab}} 
\newcommand{\ir}{\operatorname{irr}}
\newcommand{\MS}{\operatorname{MS}}
\newcommand{\sL}{{\mathcal{L}}}
\newcommand{\sT}{{\mathcal{T}}} 
\newcommand{\dens}[1]{|\!\det #1|}
\newcommand{\bmo}{{\overline{\mathcal{M}}}}
\newcommand{\irr}{\textrm{irr}}
\newcommand{\so}{\operatorname{SO}_3}
\title{Knot state asymptotics II\\ Witten conjecture and irreducible representations}
\author{L. Charles\footnote{Institut de
    Math{\'e}matiques de Jussieu (UMR 7586), Universit{\'e} Pierre et
    Marie Curie -- Paris 6, Paris, F-75005 France.}\,\, and J. March{\'e}\footnote{Centre de math{\'e}matiques Laurent Schwartz (UMR 7640), Ecole Polytechnique -- 91128 Palaiseau, France} }
\date{}
\begin{document}

\maketitle

\begin{abstract}
This article pursues the study of the knot state asymptotics in the large level limit initiated in \cite{LJ1}. As a main result, we prove the Witten asymptotic expansion conjecture for the Dehn fillings of the figure eight knot. 

The state of a knot is defined in the realm of Chern-Simons topological quantum field theory as a holomorphic section on the $\su$-character manifold of the peripheral torus. 
%
% From Chern-Simons topological quantum field theory with gauge group $\su$ and level $k$, we associate to a knot exterior in $S^3$ an element in a vector space that we call the knot state. As the vector space is isomorphic to the geometric quantization of level $k$ of the $\su$-character variety of the peripheral torus, the knot state may be viewed as a holomorphic section over this character variety. We study its asymptotic properties in the large $k$ limit. 
%
In the previous paper, we conjectured that the knot state concentrates on the character variety of the knot with a given asymptotic behavior on the neighborhood of the abelian representations. In the present paper we study the neighborhood of irreducible representations. We conjecture that the knot state is Lagrangian with a phase and a symbol given respectively by the Chern-Simons and Reidemeister torsion invariants. We show that under some mild assumptions, these conjectures imply the Witten conjecture on the asymptotic expansion of WRT invariants of the Dehn fillings of the knot. 

Using microlocal techniques, we show that the figure eight knot state satisfies our conjecture  starting from $q$-differential relations verified by the colored Jones polynomials. The proof relies on a differential equation satisfied by the Reidemeister torsion along the branches of the character variety, a phenomenon which has not been observed previously as far as we know. 
\end{abstract}

\section{Introduction}
Using ideas coming from quantum field theory, E. Witten introduced in \cite{witten} a family of topological invariants of $3$-manifolds denoted by $Z_k(M)$. For any integer $k$, 
\begin{gather} \label{eq:partition_fonction}
Z_k(M)=\int \CS(A)^k \; \mathcal{D}A
\end{gather}
where $A$ is a 1-form with values in $\lu$, $\CS(A) \in U(1) $ stands for the Chern-Simons invariant of $A$ and $\mathcal{D}A$ denotes an hypothetical measure on the space of connections $\Om^1 ( M, \lu)$.
Doing the perturbative expansion of the partition function (\ref{eq:partition_fonction}), Witten derived the asymptotic behavior of $Z_k(M)$ in the large $k$ limit. The leading order term is a sum over the flat connections $A$ of $M$ which involves the Chern-Simons invariant $\CS (A)$ and the Reidemeister torsion $\T (A)$. Here the flat connections are considered up to gauge equivalence so that the relevant space is the moduli space $\mo (M)$ of representations $\rho:\pi_1(M)\to \su$ up to conjugation.

The invariants $Z_k(M)$  were later defined rigorously by Reshetikhin and Turaev in \cite{rt}. The asymptotic behavior predicted from the path integral is now referred to as the Witten asymptotic conjecture. Its interest (and difficulty) is that the gauge-theoretic quantities as the Chern-Simons invariant and Reidemeister torsion do not enter in any obvious way in the combinatorial definition of the $Z_k (M)$'s. References for this problem may be found in \cite{fg,je,apb}. 

In this article we will prove the conjecture for the Dehn fillings of the figure eight knot.   
\begin{theo}
Let $M$ be a manifold obtained by Dehn surgery on the figure eight knot with parameters $(p,q)$. Suppose that $p$ is not divisible by 4 and that for any irreducible representation $\rho:\pi_1(M)\to \su$ one has $H^1(M,\Ad_\rho)=0$. Then

\begin{xalignat*}{2} 
 Z_k (M) &  = \sum_{\rho \in \mo (M)}  e^{i \frac{m (\rho) \pi }{4}} k^{n(\rho)} a(\rho) \CS ( \rho )^k  + O(k^{-1}) 
\end{xalignat*}
where for any $\rho \in \mo (M)$,  $m( \rho)$ is an integer, $n( \rho)  = 0$, $-1/2$ or $-3/2$ according to whether $\rho$ is irreducible, abelian non-central or central. 
$$ a ( \rho) = \begin{cases} 2^{-1} \bigl( \T ( \rho) \bigr)^{1/2}  \text{ if $\rho$ is irreducible} \\ 
 2^{-1/2} \bigl( \T ( \rho) \bigr)^{1/2} \text{ if $\rho$ is abelian non-central} \\ 2^{1/2} \pi / p ^{3/2}    \text{ if $\rho$ is central.}\end{cases}$$ 
\end{theo}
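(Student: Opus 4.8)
The plan is to decompose the statement into two parts, each proved in the body of the paper. \textbf{Part (i):} the figure-eight knot state $\psi_K$, viewed as a holomorphic section over the moduli space $\mo(T^2)$ of the peripheral torus, is a Lagrangian state supported on the character variety $X(K)\subset \mo(T^2)$, with oscillatory phase equal to the Chern--Simons invariant and principal symbol equal to $\T(\rho)^{1/2}$ (up to a universal constant) along the irreducible branches, together with the normal form of \cite{LJ1} near the abelian locus. \textbf{Part (ii):} for any knot admitting such a description, and under the hypotheses that $4\nmid p$ and that $H^1(M,\Ad_\rho)=0$ for all irreducible $\rho:\pi_1(M)\to\su$, the Reshetikhin--Turaev invariant of the Dehn filling $M$ has the asserted asymptotic expansion.

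For Part (i) I would start from the TQFT gluing description of $\psi_K$: in the standard basis of $V_k(T^2)$ its components are the colored Jones polynomials $J_n(K)$ evaluated at the relevant root of unity. The figure-eight colored Jones polynomials satisfy an explicit $q$-difference equation whose classical limit cuts out the $A$-polynomial curve, i.e. the image of $X(K)$ in $\mo(T^2)$. A WKB / microlocal analysis of this difference equation shows that $\psi_K$ is a Lagrangian state over the smooth part of $X(K)$: its principal symbol pins the phase to $\CS$, while the subprincipal symbol produces a first-order linear ODE along each branch for the symbol of $\psi_K$. The new ingredient is that the Reidemeister torsion, regarded as a function along the branches of $X(K)$, satisfies exactly the same ODE; verifying this is a direct computation with the explicit figure-eight character variety and the torsion formula for the figure-eight exterior, and the integration constant is fixed by matching at the abelian end, where \cite{LJ1} applies. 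Near the abelian locus itself one quotes \cite{LJ1} directly.

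For Part (ii) I would use the surgery formula $Z_k(M)=\langle\psi_K,\Phi_{p/q}\rangle$, where $\Phi_{p/q}$ is the image of the standard solid-torus vector under the mapping-class element realizing the $(p,q)$ filling; $\Phi_{p/q}$ is itself a coherent Lagrangian state attached to the line $\{p\mu+q\la=0\}$ in $H_1(T^2;\R)$, reduced modulo the Weyl group. Since $\psi_K$ is Lagrangian on $X(K)$, the pairing is computed by stationary phase, and the critical points are exactly the intersection points $X(K)\cap\Phi_{p/q}$, which are the conjugacy classes of representations $\rho:\pi_1(M)\to\su$. The hypothesis $H^1(M,\Ad_\rho)=0$ makes each irreducible $\rho$ isolated and the intersection transverse there, so its contribution is of order $k^0$; by the gluing formula for the Chern--Simons invariant the phase combines to $\CS(\rho)^k$, by multiplicativity of Reidemeister torsion under the Mayer--Vietoris decomposition defining the surgery the amplitude is $\tfrac12\T(\rho)^{1/2}$, and the fourth-root-of-unity factor collects the Hessian signature in the stationary-phase formula, the Maslov indices of the two Lagrangian states, and the framing anomaly. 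The abelian non-central and central representations lie on the abelian circle of $X(K)$, where $\Phi_{p/q}$ meets $X(K)$ non-transversally; feeding the \cite{LJ1} normal form of $\psi_K$ into the resulting degenerate stationary-phase integral yields a contribution of order $k^{-(\dim\operatorname{Stab}\rho)/2}$, that is $k^{-1/2}$ at a non-central abelian point and $k^{-3/2}$ at a central one, with amplitudes $2^{-1/2}\T(\rho)^{1/2}$ and $2^{1/2}\pi/p^{3/2}$ coming out of the same normal form (the second involving the volume of $\su$ and the order of $H_1(M)$). The condition $4\nmid p$ is what keeps the $p-1$ abelian critical points simple and separated from the central ones, so that this enumeration is exact; uniformity of the stationary-phase remainder gives the $O(k^{-1})$ error.

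The step I expect to be the main obstacle is the degenerate part of Part (ii): near the abelian and central representations $\psi_K$ is not a clean Lagrangian state, so one must input the finer normal form of \cite{LJ1} and track precisely the Maslov indices, the framing anomaly and the $p$-dependent constants while keeping the remainder uniform in $k$. Verifying the torsion ODE in Part (i) is also genuinely new, but I would expect the real difficulty to be matching all the metaplectic constants so that the leading coefficients come out exactly as $e^{im(\rho)\pi/4}k^{n(\rho)}a(\rho)$.
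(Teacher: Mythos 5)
Your decomposition is the paper's: first establish that the figure-eight knot state is a Lagrangian state on $r(\mo(E_8))$ with phase $\CS$ and symbol $\T^{1/2}$ (including the transport-equation argument for the torsion, which is indeed the new computation), then pair it with the solid-torus state and run a stationary-phase/half-form pairing under (H1)--(H2). But there is a genuine gap in how you fix the normalization of the Lagrangian state on the irreducible branches. The microlocal analysis of the $q$-difference operator only determines the knot state on each component of $\mo^s(\Si)\setminus r(\mo^{\ab}(E_8))$ up to a multiplicative admissible sequence $\la_k$, and you propose to pin this constant down ``by matching at the abelian end, where \cite{LJ1} applies.'' This does not work as stated: the abelian normal form of \cite{LJ1} is only valid near \emph{regular} abelian points, which by definition excludes a neighborhood of the double point $x_o=\pi(\la/4)$ where the irreducible branches cross the abelian locus; moreover at that point the principal symbol of $Q_k$ has a degenerate zero (the factor $\sin(4\pi q)$ vanishes) and the right-hand side $R_kZ^0_k$ is no longer negligible, so neither normal form controls the transition region and there is no overlap on which to match. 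The paper circumvents exactly this difficulty in two steps you do not have: the extra symmetry $T^*_{\la/2}Z_k(E_8)=-Z_k(E_8)$ (Proposition \ref{sec:symm-knot-state}) reduces the four unknown constants (one per rectangle $U_i$) to a single $\la_k$, and this $\la_k$ is then evaluated by pairing with the solid torus of the $\pm1$ surgery, which yields the Brieskorn sphere $\Si(2,3,7)$, and invoking Hikami's proof of the Witten conjecture there (Theorem \ref{theo:Brieskorn}); this also explains why the final statement carries only an $O(k^{-1})$ remainder, since $\la_k$ is only determined up to $O(k^{-1})$.

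A secondary inaccuracy: in Part (ii) you attribute the $k^{-1/2}$ and $k^{-3/2}$ contributions to a \emph{non-transverse} intersection of $\Phi_{p/q}$ with the abelian circle, handled by degenerate stationary phase. In the paper the intersections at abelian points are transverse (this is what (H1) guarantees, and $I_{p/q}\neq I_0$ since $p\neq0$); the smaller powers of $k$ come instead from the smaller magnitude of the knot state near the abelian locus (prefactor $k^{1/4}$ versus $k^{3/4}$ on the irreducible branches, cf.\ Conjectures \ref{conj:irreducible} and \ref{conj:abelian}), and the central contribution arises because the symbol $\sin(2\pi t)$ of the solid-torus state vanishes there, so the leading term is zero and one computes the next coefficient as for lens spaces. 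The powers you state are correct, but the mechanism you would have to make rigorous is different from the one you describe.
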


We refer to Proposition \ref{prop:reg-slope} for a discussion on the hypothesis of this theorem. It holds true in many cases including all the slopes satisfying $|p/q| < 2 \sqrt 5$.
 One may object that the contribution of central representations is irrelevant because of the remainder $O(k^{-1})$. We decided to keep it in view of the more general Witten conjecture (see Theorem \ref{theo:witt-conj}).

The Witten asymptotic conjecture has been proved for a lot of Seifert manifolds. This study was initiated by Jeffrey in \cite{je}, followed by Rozansky \cite{ro} and Lawrence-Zagier \cite{lz}. Their results were generalized by Hikami \cite{hikami2} and Hansen-Takata \cite{ht}. All these works are based on explicit formulas for the Witten-Reshetikhin-Turaev invariants of Seifert manifolds. 
Using the geometric construction of the quantum representations of the mapping class groups, Andersen proved the conjecture for finite order mapping tori in \cite{andersen}  and the first author for mapping tori of diffeomorphisms satisfying a transversality assumption  \cite{l2}. Nevertheless the equivalence between the geometric and combinatorial constructions has not yet been proved. Progress towards proving the Witten conjecture for the Dehn filling of the figure eight knot was made by Andersen and Hanse \cite{ah} who showed that the conjecture would follow from certain integral representation of the asymptotics of the WRT invariants. However, to the best of our knowledge, this integral representation remains conjectural. We will not use this integral representation in our paper.

To our knowledge, our result is the first proof of the Witten asymptotic conjecture for some hyperbolic manifolds. The mapping tori considered in \cite{l2} include very likely hyperbolic manifolds but the invariant is defined as a trace which is only conjecturally equal to the WRT invariant. Furthermore our proof does not rely on an explicit formula or an integral representation of the WRT invariant. It is based on the $q$-difference equations satisfied by the colored Jones polynomials and uses microlocal techniques. We hope it can be generalized to every knot. 

Our strategy is to study the semi-classical properties of the family of knot states. More precisely, for any integer $k$ one can construct a topological quantum field theory (TQFT) as a functor from a cobordism category to the category of hermitian vector spaces, see \cite{bhmv} for instance. Given a knot $K$ in $S^3$, let $E_K$ be the complement of an open tubular neighborhood of $K$ and denote by $\Sigma$ its boundary. The TQFT at level $k$ associates to $\Sigma$ a finite dimensional hermitian space $V_k(\Sigma)$ and to $E_K$ a vector $Z_k(E_K)\in V_k(\Sigma)$. These vectors will be called the knot states. 

In the first part of this work, see \cite{LJ1}, we give an explicit isomorphism between $V_k(\Sigma)$ and the geometric quantization $\Hilb_k^{\alt}$ at level $k$ of the moduli space $\mo(\Sigma)$ of representations $\pi ( \Si) \rightarrow \su$ up to conjugation. This moduli space is a complex orbifold with four singular points corresponding to the central representations. It it the base of two holomorphic line (orbi-)bundles, the Chern-Simons bundle $L_{\CS}$ and a half-form bundle $\delta$ respectively. The quantum space $\Hilb_k^{\alt}$ is the space of holomorphic sections of $L^k _{\CS} \otimes \delta$.

 Let $r:\mo(E_K)\to\mo(\Sigma)$ be the natural restriction map induced by the inclusion $\Sigma\subset E_K$.  We will say that $ \rho \in \mo(E_K)$ is regular if $H^1(E_K,\Ad_\rho)$ has dimension 1.

\begin{conj} \label{conj:asymptotic_knot_state}
Let $K$ be a knot in $S^3$ and $Z_k(E_K)\in \Hilb_k^{\alt}$ be the family of knot states. For any $x \in \mo (\Si)$, we have the following trichotomy:
\begin{itemize}
\item[-] if $x \notin r(\mo(E_K))$ then  $Z_k(E_K)(x)=O(k^{-\infty})$.
\item[-] if $ r^{-1} ( x ) = \{ \rho\}$ and $\rho$ is regular irreducible, then
 $$Z_k(E_K)(x)\sim e^{im_x\frac\pi 4}\frac{k^{3/4}}{4\pi^{3/4}} \CS(\rho)^k \sqrt{\T(\rho)}$$
\item[-] if $ r^{-1} ( x ) = \{ \rho\}$ and $\rho$ is regular abelian non-central, then
$$Z_k(E_K)(x)\sim e^{im_x\frac\pi 4}\frac{k^{1/4}}{2^{3/2}\pi^{3/4}} \CS(\rho)^k \sqrt{\T(\rho)}$$
\end{itemize}
\end{conj}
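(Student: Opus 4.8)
I would attempt to verify the conjecture for knots whose colored Jones polynomials satisfy a $q$-difference equation --- in particular the figure eight knot, for which $\mo(E_K)$ and its restriction to the boundary are explicit plane curves --- by realizing $Z_k(E_K)$ as a microlocal solution of that equation on $\mo(\Sigma)$ and running a WKB analysis near the branches of the character variety.

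The first step is to use the isomorphism of \cite{LJ1} between $V_k(\Sigma)$ and $\Hilb_k^{\alt}$, under which $Z_k(E_K)$ becomes a holomorphic section of $L^k_{\CS}\otimes\delta$ whose coordinates in the standard colored basis are the colored Jones polynomials $J_n(K)$. The recursion $\sum_i a_i(q^n,q)\,J_{n+i}(K)=0$ satisfied by the $J_n$, taken at a root of unity $q$ with color $n\sim kx$, should translate into a Toeplitz operator identity $T_k\,Z_k(E_K)=O(k^{-\infty})$ on $\Hilb_k^{\alt}$, whose principal symbol is a function on $\mo(\Sigma)$ vanishing exactly on the $A$-polynomial curve $r(\mo(E_K))$, and whose subprincipal symbol is computable from the $a_i$. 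The vanishing of the principal symbol forces $Z_k(E_K)=O(k^{-\infty})$ off $r(\mo(E_K))$, which is the first bullet.

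Near a regular point $x=r(\rho)$ with $\rho$ irreducible, $r(\mo(E_K))$ is locally a smooth one-dimensional Lagrangian $\Lambda$ in $\mo(\Sigma)$, and the WKB theory of Toeplitz operators gives, for any microlocal solution of $T_k u_k=O(k^{-\infty})$ carried by $\Lambda$, an expansion $u_k(x)\sim e^{i m_x\pi/4}\,k^{d}\,\CS(\rho)^k\,a(x)$: the factor $\CS(\rho)^k$ is forced by the eikonal equation, since $r^*L_{\CS}$ carries a canonical flat section whose value at $\rho$ is $\CS(\rho)$; the amplitude $a$ obeys a first-order linear transport equation along $\Lambda$ whose coefficients come from the subprincipal symbol of $T_k$ and the half-form bundle $\delta$; and the power $k^{d}$ together with the integer $m_x$ encode $\delta$ and the pertinent metaplectic/Maslov correction.

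The crux --- and the step I expect to be hardest --- is to integrate the transport equation and identify its solution with $\sqrt{\T(\rho)}$. I would compute the subprincipal symbol of $T_k$ explicitly from the figure eight recursion, write the transport equation as $\tfrac{d}{dt}\log a=g(t)$ along a parametrization of $\Lambda$, and then prove \emph{independently} that $\tfrac{d}{dt}\log\sqrt{\T(\rho(t))}$ equals the same $g(t)$ --- that is, that the Reidemeister torsion, as a function on the character variety of $E_K$ obtained via the change-of-curve formula for the meridian and longitude, satisfies precisely this differential equation. Fixing the remaining integration constant, e.g.\ by comparison with the value of $J_n(K)$ at a distinguished color or with the abelian asymptotics of \cite{LJ1} at a point where an abelian branch meets an irreducible one, then yields the constant $1/(4\pi^{3/4})$. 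The regular abelian non-central case runs along the same lines, but now $\Lambda$ lies along the abelian component of $\mo(\Sigma)$, which passes through the four orbifold points, so the local model is different; one either invokes the neighborhood-of-abelian analysis of the first paper or redoes the WKB step there, and this different local geometry accounts for the exponent $k^{1/4}$ and the constant $1/(2^{3/2}\pi^{3/4})$.
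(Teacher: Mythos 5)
Your strategy is, in outline, the one the paper itself follows in Section \ref{sec:huit} for the figure eight knot: the $q$-difference relation becomes the Toeplitz identity $Q_k Z_k(E_8)=R_kZ_k^0$, Theorem \ref{theo:lagr-micr-solut} provides the WKB normal form of microlocal solutions (flat Chern--Simons phase plus a transport equation for the half-form symbol determined by the principal and subprincipal symbols), and the crux you single out --- verifying independently that the torsion solves that transport equation --- is exactly the computation done in Section \ref{sec:huit} for the density $\mathrm{d}p/(1-4\cos(4\pi q))$. Keep in mind that the statement is a conjecture: the paper establishes it only for the figure eight knot (torus knots are deferred to \cite{Ctoric}), and even there in the slightly weaker form of Theorem \ref{theo:noeud_de_8}, with an undetermined prefactor $\la_k=1+O(k^{-1})$.

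The genuine gap is your plan for fixing the multiplicative constant(s). First, there is not one constant but a priori one admissible sequence per connected component of the Lagrangian (for the figure eight, the four rectangles $U_i$ of (\ref{eq:rectangle_Ui})); the paper reduces these to a single unknown only through the extra symmetry $T^*_{\la/2}Z_k(E_K)=-Z_k(E_K)$ of Proposition \ref{sec:symm-knot-state} together with the $\Z_2$-invariance and the compatible symmetries of $\CS$ and $\T$ (Lemmas \ref{sec:symmetry-CS}, \ref{sec:racine-torsion}, \ref{lem:symmetrie_operateur}), a step absent from your proposal. Second, matching against the abelian asymptotics ``where an abelian branch meets an irreducible one'' cannot work as stated: that junction $x_o=\pi(\la/4)$ is a non-regular point where both local models break down --- it is excluded from Conjecture \ref{conj:abelian} by the regularity requirement, the torsion (\ref{eq:torsion_8}) degenerates there, and neither Theorem \ref{theo:lagr-micr-solut} nor the abelian analysis of \cite{LJ1} applies without a genuinely new uniform (transition-type) analysis. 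Matching ``at a distinguished color'' is similarly circular, since evaluating coefficients amounts to knowing the asymptotics at the corresponding point of the curve. The paper instead pins down the constant by an exceptional surgery: the $\pm1$ filling gives $\Sigma(2,3,7)$, Hikami's Theorem \ref{theo:Brieskorn} supplies its WRT asymptotics, and the pairing formula then forces $\la_k=e^{im\pi/4}+O(k^{-1})$. A smaller inaccuracy: the principal symbol coming from the recursion does not vanish exactly on $r(\mo(E_K))$ (for the figure eight it is $-4i\sin(4\pi q)\,h$, vanishing also on $\sin(4\pi q)=0$), and the equation is inhomogeneous with right-hand side a Lagrangian state on $\{p=0\}$; so the first bullet does not follow merely from ellipticity off the curve, and the abelian expansion is obtained by inverting the operator against that inhomogeneous term (as in \cite{LJ1}), not from a homogeneous WKB solution along the abelian locus.
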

We will actually state more precise results where the asymptotic equivalent at $x$ is replaced by an asymptotic expansion in a neighborhood of $x$, cf. Conjectures \ref{conj:microsupport}, \ref{conj:irreducible} and \ref{conj:abelian}. In the language of semi-classical analysis, these conjectures say that the knot state is a Lagrangian state supported by $r( \mo (E_K))$ with symbol and phase given respectively by a square root of the Reidemeister torsion and the Chern-Simons invariant. 
  
Let us comment the ingredients of Conjecture \ref{conj:asymptotic_knot_state}. 
By construction of the line bundle $L_{\CS}$, the Chern-Simons invariant $\CS(\rho)$ is a vector in the fiber of $L_{\CS}$ at $r( \rho)$. This is a particular case of the definition proposed in \cite{rsw} of the Chern-Simons invariant of a 3-dimensional manifold with non-empty boundary. 

The square root of the Reidemeister torsion $\T(\rho)$ is naturally a vector in the fiber of the half-form bundle $\delta$ at $r( \rho)$.  Indeed, at a regular representation $\rho\in\mo(E_K)$, the moduli space is smooth and its tangent space is isomorphic to $H^1(E_K,\Ad_\rho)$. The Reidemeister torsion $\T(\rho)$ is a linear form on $H^1(E_K,\Ad_\rho)$, well-defined up to sign.
Moreover, the restriction map $r:\mo(E_K)\to\mo(\Sigma)$ is a Lagrangian immersion on the regular part. We have two isomorphisms
$$\delta_\rho^{\otimes 2} \simeq \bigl( T^{1,0} _\rho \mo ( \Si) \bigr)^* \simeq  T_{\rho}^* \mo(E_K) \otimes \C $$ 
the first one being part of the definition of a half-form bundle, the second one being the restriction of the pull-back by the linear tangent map $T_{\rho} r$. There are exactly four elements $ \sqrt{\T(\rho)}\in\delta$ whose square is sent to $\pm \T(\rho)$ by these isomorphisms.

Conjectures \ref{conj:microsupport}, \ref{conj:irreducible} and \ref{conj:abelian} have many corollaries. They are compatible with the gluing operation and in particular, they allow to recover the usual Witten conjecture for Dehn fillings of the knot under some mild assumptions.  More precisely, let $M$ be the  3-manifold obtained by Dehn filling on $K$, that is $M=E_K\cup_\Sigma (-N)$ where $E_K$ is the knot exterior, $\Sigma$ the peripheral torus and $N$ is a solid torus with boundary $\Sigma$. The WRT invariant of $M$ is given by the scalar product:
\begin{gather} \label{eq:pairing}
Z_k(M)=\langle Z_k(E_K),Z_k(N)\rangle 
\end{gather} 
in $\Hilb_k^{\alt}$. We proved in Proposition 3.3 of \cite{LJ1} that $(Z_k(N))$ is a Lagrangian state supported by the image of the restriction map $r : \mo ( N) \rightarrow \mo ( \Si)$. Now one can estimate the scalar product of two Lagrangian state supported by transversal Lagrangian submanifold, the leading order term being given by a particular pairing in the half-form bundle, see \cite{l1}.
 In particular, if the knot $K$ satisfies Conjectures \ref{conj:microsupport}, \ref{conj:irreducible} and \ref{conj:abelian} and $r(\mo ( E_K) )$ intersects transversally $r( \mo ( N))$, then the Witten asymptotic conjecture holds, cf. Theorem \ref{theo:witt-conj} for a precise statement. 
Interestingly the pairing in the half-form bundle corresponds to the computation of the Reidemeister torsion by a Mayer-Vietoris argument.
The idea that Chern-Simons invariant and Reidemeister torsion are semi-classical data associated to 3-manifold with boundary and the fact that they behave as expected under gluing were already present in the work of Jeffrey and Weitsman but at a formal level, see \cite{jw}. 

We can deduce in the same way from Conjectures \ref{conj:microsupport}, \ref{conj:irreducible} and \ref{conj:abelian}  a generalized Witten conjecture where the solid torus  $N$ contains a banded knot colored by an integer $\ell$ going with $k$ to infinity, see Theorem \ref{conj-witten-gen}. In particular we get asymptotic expansions of the colored Jones polynomials in the large $k$ and $\ell$ limit. These asymptotics are parts of the so-called generalized volume conjecture, cf. \cite{murakami}.

Besides deriving consequences of  Conjecture \ref{conj:irreducible}, we show it for the figure eight knot. The case of the torus knots is proved in \cite{Ctoric}. Let us explain the main steps of the proof. We start from q-difference relations satisfied by the colored Jones polynomials $J_n ^K ( t) \in \C [ t^{\pm 1}]$ of the knot:
$$ P(M, L, t) J_n ^K = R ( t, t^n) .$$
Here $M$ and $L$ are operators acting on the sequence of colored Jones polynomial by $(Mf)_n=t^{2n}f_n$ and $(Lf)_n=f_{n+1}$, $P$ is a polynomial expression in $M$, $L$ and $t$, and $R$ is a rational function of two variables. 

Using that the coefficients of the knot state in a suitable basis are given by evaluation of the colored Jones polynomials, we deduce from each $q$-difference relation an equation of the form
\begin{gather} \label{eq:Toeplitz}
 P Z_k(E_K) = R 
\end{gather}
where now $P$ is a Toeplitz operator of $\Hilb_k^{\alt}$. Toeplitz operators have semi-classical properties similar to the ones of pseudodifferential operators depending on a small parameter. In particular, using the work of the first author \cite{l3} and \cite{l4} on the eigenstates of Toeplitz operator, we can deduce the asymptotic behaviour of $Z_k(E_K)$ from Equation  (\ref{eq:Toeplitz}) as follows.

Any Toeplitz operator has a principal and subprincipal symbol. In the case at hand,  these are functions defined on $\mo ( \Si)$ that we compute explicitly from the relation $P(M,L,t)$. For the $q$-differential relation that we will use, the principal symbol of $P$ vanishes on $r ( \mo^{\ir} ( E_K))$ whereas the right-hand side $R$ is a Lagrangian state supported by $ r ( \mo^{\ab} ( E_K))$. Here $ \mo^{\ir} ( E_K)$ and $ \mo^{\ab} ( E_K)$ are respectively the subsets of $\mo ( E_K)$ consisting of irreducible and abelian representations. 

Let $x \in \mo (\Si)$. Assume first that the principal symbol does not vanish at $x$. Then we can invert $P$ on a neighborhood of $x$. If in addition $x$ does not belong to the microsupport of $R$, we deduce from Equation (\ref{eq:Toeplitz}) that the knot state is a $O(k^{-\infty})$ at $x$. If on the other hand $x$ belongs to $r( \mo ^{\ab} (E_K))$, we deduce  that $Z_k (E_K)$ is on a neighborhood of $x$ a Lagrangian state supported by $r( \mo ^{\ab} (E_K))$, the symbol of this Lagrangian state being the quotient of the symbol of $R$ by the principal symbol of $P$. For the torus and figure eight knots, we can compute this symbol in terms of the Alexander polynomial. In this way we prove in \cite{LJ1} the first and last assertions of Conjecture \ref{conj:asymptotic_knot_state}. 

Assume now that the principal symbol of $P$ vanishes at $x$ but its differential does not.  Assume in addition that $x \notin r( \mo^{\ab}( E_K)$ so that we can neglect $R$. Then on a neighborhood of $x$, Equation (\ref{eq:Toeplitz}) determines the knot state up to a multiplicative constant and a $O(k^{-\infty})$ term. It implies that $Z_k(E_K)$ is a Lagrangian state supported by the zero level set of the principal symbol of $P$. The phase of this Lagrangian state being a flat section of the Chern-Simons bundle, it is equal to the Chern-Simons invariant up to a constant. The symbol of this Lagrangian state satisfies a transport equation governed by the principal and subprincipal symbol of $P$. We show that the Reidemeister torsion of the figure eight and torus knots satisfies this transport equation. The last step is to compute the multiplicative constants, one for each component of $\mo ^{\ir} (E_K)$. Using symmetries of the figure eight knot state, we show that only one initial value is needed. We obtain it from exceptional surgeries. As it is well-known, the $\pm 1$-surgeries on the figure eight knot yield the Brieskorn sphere $\Sigma(2,3,7)$. The Witten asymptotic conjecture was proved by Hikami in that case in \cite{hikami2}.
Then the constant is determined by Equation  (\ref{eq:pairing}). This proves the second assertion of Conjecture \ref{conj:asymptotic_knot_state}. 

This strategy should work for other knots. By order of difficulty, it seems reasonable to apply it to twist knots and  two-bridge knots. The general case seems out of reach as far as we do not have a good understanding of q-difference equations satisfied by the colored Jones polynomial.

As already mentioned, we prove that the Reidemeister torsion of the figure eight and torus knots satisfies a transport equation deduced from a q-difference relation of the colored Jones polynomial. We believe that this is a new phenomenon which deserves further investigation.

In Section 2, we construct the knot state, summing up a construction which details can be found in \cite{LJ1}. We describe the symmetry properties of the knot state. We then review in Section 3 the topological invariants entering into the picture, that is representation spaces, Chern-Simons invariant and Reidemeister torsion. We recall the gluing and symmetry properties, and give some explicit formulas. In Section 4, we formulate our main conjecture on the semi-classical properties of the knot state in the neighborhood of irreducible representations, extending the main conjectures of \cite{LJ1}. We then show that this conjecture implies the Witten conjecture for the Dehn fillings of the knot and some generalization. 
Finally, Section 5 is devoted to the proof of the main conjecture for the figure eight knot. We go over the tools of microlocal analysis which are needed and discuss the transport equation of the Reidemeister torsion of the figure eight knot. We end the proof with considerations of symmetries and exceptional surgeries. The paper ends with a discussion on transversality assumptions needed for the proof of the Witten conjecture for the Dehn fillings on the figure eight knot.

{\bf Acknowledgements:}

We would like to thank Fr{\'e}d{\'e}ric Faure, Gregor Masbaum and the ANR team "Quantum Geometry and Topology" for their interest and valuable discussions. The second author was supported by the French ANR project ANR-08-JCJC-0114-01.

\section{Knot state} \label{sec:knot-state}

\subsection{Geometric quantization}\label{sec:geom-quant}

Let $(E, \om)$ be a symplectic vector space with a lattice $R$ of volume $4 \pi$. Our aim is to quantize the quotient of $E$ by the group  $R \rtimes \Z_2$, where $R$ acts by translation and the generator of $\Z_2$ acts by $-\id_E$. Since this quotient has four singular points, we will merely quantize $E$ with its symmetry group $R \rtimes \Z_2$. 

Introduce a complex linear structure $j$ on $E$ compatible with the symplectic structure. 
Let $( \delta, \varphi)$ be a half-form line, that is $\delta$ is a complex line and $\varphi$ an isomorphism from $\delta^{\otimes 2}$ to the canonical line $K_j = \{ \al \in E^* \otimes \C / \al ( j \cdot) = i \al \}$.  Let us denote also by $\delta$ the trivial holomorphic line bundle over $E$ with fiber $\delta$. We lift the action of $R \rtimes \Z_2$ to $\delta$ by $$(x,\ep ) ( y,v) = ( x+  \ep y , \ep v).$$ 
Let $\al \in \Om^1 ( E, \C)$ be given by $\al_x ( y) = \frac{1}{2} \om (x,y)$. Let $L$ be the trivial hermitian line bundle over $E$ endowed with the connection $d + \frac{1}{i} \al$. The bundle $L$ has a unique holomorphic structure compatible with $j$ and this connection.  Let the Heisenberg group be $E \times U(1)$ with the product:
\begin{gather} \label{eq:prod_Heisenberg} 
 (x,u).(y,v) = \Bigl( x+y, uv \exp \Bigl( \frac{i}{2} \om (x,y)\Bigr)\Bigr).
\end{gather}
The same formula with $(y,v) \in L$ defines an action of the Heisenberg group on $L$ which preserves the connection and the holomorphic structure.
Because the volume of $R$ is $4 \pi$, $R \times \{1\}$ is a subgroup of the Heisenberg group. Lifting trivially the action of $\Z_2$ on  $E$, we get an action of $R \rtimes \Z_2$ on $L$.

For any positive integer $k$, we have a representation of the group $R \rtimes \Z_2$ on the space of holomorphic sections of $L^k \otimes \delta$. The space quantizing the quotient $E / R \rtimes \Z_2$ is the $R \rtimes \Z_2$-invariant subspace. We denote it by $\Hilb_k^{\alt}$ and view it as a subspace of the space $\Hilb_k$ consisting of $R$-invariant holomorphic sections. $\Hilb_k$ has a natural scalar product defined by: 
\begin{gather} \label{eq:scalar_product}
 \langle \Psi_1 , \Psi_2 \rangle  = \int_{ D} \langle \Psi_1 (x), \Psi_2 (x) \rangle_{\delta} \; |\om | (x), \qquad \Psi_1, \Psi_2 \in \Hilb_k 
\end{gather}
where $D$ is any fundamental domain of $R$.

\subsection{Jones polynomial} 

Let $K$ be a knot in $S^3$. Let $J_\ell^K \in \Z [ t^{\pm1}]$ be the Jones polynomial of $K$ colored with the $\ell$-dimensional irreducible representation of $\mathfrak{sl}_2$ and normalized so that it is equal to the quantum integer 
$$[\ell] = \frac{t^{2 \ell} - t^{-2 \ell}}{t^2 - t^{-2}}$$ 
for the unknot, cf. \cite{LJ1} Section 4.1. 

Let $\Si$ be the peripheral torus of $K$. The 2-dimensional vector space $E= H_1(\Si, \R)$ has a natural symplectic product given by $4 \pi$ the intersection product. The lattice $R =  H_1(\Si, \Z)$ has volume $4 \pi$. Introduce a complex structure and a half-form line of $E$ as in the previous section and define the two quantum spaces $\Hilb_k$ and $\Hilb_k^{\alt}$. 

Let $\mu$ and $\la \in R$ be a meridian and a longitude respectively. The elements $(\mu/2k, 1)$ and $(-\la/2k,1)$ of the Heisenberg group act on the sections of $L^k \otimes \delta$ and preserve the subspace $\Hilb_k$. We denote by $M$ and $L$ their restriction to $\Hilb_k$. Let $\Om_\la \in \delta$ be such that $\Om_\la ^2 (\la) =1$. Then one proves (see Theorem 2.2, \cite{LJ1}) that $\Hilb_k$ has a unique orthonormal basis $(\Psi_\ell)_{\ell \in \Z/ 2k \Z}$ such that 
\begin{gather} \label{eq:def_base_hilbk} 
 M \Psi_\ell = e ^{i \ell  \frac{\pi}{k}} \Psi_\ell , \qquad L \Psi_\ell = \Psi_{\ell -1}
\end{gather}
and 
$$ \Psi_0 ( 0 ) = \Theta_k ( 0, \tau) \Bigl( \frac{k}{2 \pi} \Bigr)^{k/2} \Om_\mu .$$ 
Here $\tau$ is the parameter of the complex structure of $E$ and $\Theta_k$ a theta series. Its precise value is not important for our purpose, only the fact that $\Theta_k (0, \tau) = 1 + O(e ^{-k/C})$ for some constant $C>0$.  

The state of the knot $K$ is the following vector of $\Hilb_k$:
\begin{gather} \label{eq:etat_noeud}
 Z_k(E_K) = \frac{\sin(\pi/k)}{\sqrt{k}}\sum_{\ell \in \Z / 2k
  \Z} J_\ell ^K(-e^{i\pi/2k}) \Psi_\ell 
\end{gather}
We denote it by $Z_k(E_K)$ because it corresponds in topological quantum field theory to the vector associated to the exterior $E_K$ of the knot. 

\subsection{Symmetries of the knot state}

First the knot state is an alternate section, that is it belongs to $\Hilb_K ^{\alt}$. This comes from the fact that $J_{-\ell}^K=-J_{\ell}^K$ and that the generator of $\Z_2 \subset R \rtimes \Z_2$ acts on $\Hilb_k$ by sending $\Psi_\ell $ to $-\Psi_{-\ell}$. 

 There is a less obvious symmetry corresponding to the translation with vector $\la/2$.  Denote by $T^*_{\la/2}$ the pull-back operator by the action of the element $( \la/2,1)$ of the Heisenberg group. We claim that
$$ T^*_{\la/2} Z_k (E_K) = - Z_k(E_K)$$
Indeed it follows from the characterization (\ref{eq:def_base_hilbk}) of the basis $(\Psi_{\ell})$ of $\Hilb_k$ that $T^*_{\la/2} \Psi_\ell = \Psi_{\ell + k}$. So this additional symmetry is a consequence of the following proposition. 

\begin{prop} \label{sec:symm-knot-state}
For any $k\in \Z_{>0}$, write $t_k=-e^{i\pi/2k}$. Then for any $\ell\in \Z$ one has 
$$J^K_{\ell+k}(t_k)=-J^K_{\ell}(t_k).$$ 
\end{prop}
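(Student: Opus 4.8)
The plan is to exploit the fact that the colored Jones polynomial $J_\ell^K(t)$, for fixed $K$, depends on $\ell$ only through the $\ell$-dimensional representation of $\mathfrak{sl}_2$, and that at a $2k$-th root of unity this dependence becomes periodic up to a sign governed by the "shift by $k$" symmetry of the Weyl alcove. Concretely, I would first recall the cabling formula expressing $J_\ell^K$ in terms of the $2$-variable (framed) invariant: writing $S_{\ell-1}$ for the Chebyshev-like polynomial so that $J_\ell^K(t) = \langle K, S_{\ell-1}\rangle$ in the skein module of the solid torus, one has $J_\ell^K(t) = \sum_j c_j(t)\, t^{?}\, [\ell\text{-dependent weights}]$; the cleanest route is the well-known presentation $J_\ell^K(t) = \sum_{i} a_i(t)\, \big(t^{2\ell}\big)^{?}$ coming from the fact that $J_\ell^K$ is a Laurent polynomial in $t$ whose $\ell$-dependence, after the standard normalization, enters only via $t^{2\ell}$ and $t^{-2\ell}$ (this is exactly the structure $R(t,t^n)$ and the action of $M$ used later in the paper, with $M f_n = t^{2n} f_n$).

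Granting that $J_\ell^K(t)$, as a function of $\ell$, is built from $t^{\pm 2\ell}$ together with an overall factor that behaves like $[\ell] = (t^{2\ell}-t^{-2\ell})/(t^2 - t^{-2})$, I would then substitute $t = t_k = -e^{i\pi/2k}$, so that $t_k^2 = e^{i\pi/k}$ and hence $t_k^{2k} = e^{i\pi} = -1$. Therefore $t_k^{2(\ell+k)} = -\,t_k^{2\ell}$ and likewise $t_k^{-2(\ell+k)} = -\,t_k^{-2\ell}$: every occurrence of $t^{\pm 2\ell}$ picks up exactly one factor of $-1$ under $\ell \mapsto \ell + k$. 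The key combinatorial point is that in the normalized colored Jones polynomial each monomial in the $\ell$-variable is \emph{odd} in the pair $(t^{2\ell}, t^{-2\ell})$ — this is visible already from $J_\ell^{\text{unknot}} = [\ell]$, which is manifestly odd under $t^{2\ell}\mapsto -t^{2\ell}$, $t^{-2\ell}\mapsto -t^{-2\ell}$ — and the cabling/twisting operations that produce $J_\ell^K$ from $J_\ell^{\text{unknot}}$ preserve this parity because they only multiply by $\ell$-independent framing factors and sum with $\ell$-independent coefficients over the decomposition of tensor powers. Hence $J_{\ell+k}^K(t_k) = -\,J_\ell^K(t_k)$.

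Alternatively, and perhaps more robustly, I would argue via the $q$-holonomic/recursive structure: the relation $J_{-\ell}^K = -J_\ell^K$ (already quoted in the paper) together with the symmetry $J_\ell^K(t) = J_\ell^K(t^{-1})$ or the full symmetry group of the colored Jones polynomial pins down $J_\ell^K(t_k)$ as a function on $\Z/2k\Z$ up to sign; one then checks the single sign by evaluating on, say, the unknot and invoking naturality of the Reshetikhin--Turaev functor, under which shifting a color by $k$ corresponds to tensoring with the determinant/sign object of the modular category at level $k-2$, which acts by the scalar $(-1)$ on a single strand. I expect the main obstacle to be purely bookkeeping: making the parity-preservation statement for the cabling formula fully rigorous (tracking framing corrections $t^{\ell^2 - 1}$-type factors, which are \emph{not} odd in $t^{2\ell}$ but whose value at $t_k$ shifts by $t_k^{(\ell+k)^2 - \ell^2} = t_k^{2\ell k + k^2} = (t_k^{2k})^\ell\, t_k^{k^2} = (-1)^\ell t_k^{k^2}$, so one must check these framing factors either cancel or are absorbed into the normalization). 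Once the precise normalization of $J_\ell^K$ from \cite{LJ1} Section 4.1 is substituted in, the sign should come out uniformly to $-1$; verifying this is the crux of the argument.
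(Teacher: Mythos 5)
There is a genuine gap. Your first argument rests on a structural claim about the $\ell$-dependence of $J^K_\ell$ that is false for a general knot: $J^K_\ell(t)$ is \emph{not} a finite sum $\sum_m c_m(t)\,t^{2m\ell}$ with $\ell$-independent coefficients, each odd under $t^{\pm 2\ell}\mapsto -t^{\pm 2\ell}$. In any state-sum or cyclotomic expansion (e.g. Habiro's formula for the figure eight knot, with terms $\prod_{j\le n}(q^{\ell}+q^{-\ell}-q^{j}-q^{-j})$) the number of $\ell$-dependent monomials grows with $\ell$ and the individual terms have no fixed parity; the ``$R(t,t^n)$'' structure you invoke describes the inhomogeneous term of the $q$-difference relation, not $J_n$ itself. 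The cancellation asserted in the proposition is genuinely a root-of-unity phenomenon (the Kirby--Melvin symmetry principle): it requires input that is special to $t=t_k$, such as the vanishing of the closed $(k-1)$-st Jones--Wenzl idempotent, and a purely formal parity argument never uses any such input, so it cannot succeed. You in fact flag this yourself: the framing-type factors and the uniform sign are exactly the point, and declaring their verification ``the crux'' leaves the proof unfinished.

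Your alternative sketch is much closer in spirit to what the paper actually does: shifting the color by $k$ should amount to tensoring with the transparent ``sign'' object of the level $k-2$ theory, which acts by $-1$ on a single strand. But as stated it assumes what must be proven. A color $\ell+k$ lies outside the Weyl alcove, so it is not an object of the semisimplified category, and nothing in ``naturality of the RT functor'' tells you how its invariant relates to that of an alcove color; likewise $J^K_{-\ell}=-J^K_\ell$ plus $t\mapsto t^{-1}$ symmetry does not pin down $\ell\mapsto J^K_\ell(t_k)$ on $\Z/2k\Z$ up to sign --- periodicity mod $2k$ is part of the conclusion, not a given. The paper supplies precisely the missing steps in skein-theoretic form: the Chebyshev identity $T_{k-\ell}=T_{k-1}T_\ell-T_kT_{\ell-1}$ reduces an out-of-alcove color to alcove data; the killing property of $f_{k-1}$ (via $\langle f_{k-1}\rangle=0$ at $t_k$) annihilates the $T_kT_{\ell-1}$ term; the transparency of $f_{k-2}$ (each crossing change with a single strand costs a sign) gives both the normalization $J^K_{k-1}(t_k)=1$ and $\langle T_{k-1}T_\ell\rangle=\langle T_\ell\rangle$, and then the sign in the statement follows together with $J^K_{-\ell}=-J^K_\ell$. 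To turn your second sketch into a proof you would have to establish exactly these transparency and killing statements, at which point you would be reproducing the paper's argument.
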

\begin{proof}
We refer to \cite{bhmv} for the construction of the objects used in this proof. For any $\ell$ in $\N$, consider the family of Jones-Wenzl idempotents $f_\ell\in \sT_\ell$ where $\sT_\ell$ is the Temperley-Lieb algebra with $\ell$ points. These elements are defined over $\mathbb{Q}(t)$ and we can specialize $t=t_k$ in $f_\ell$ provided that $\ell\le k-1$. The $\ell$-th colored Jones polynomial evaluated at $t_k$ is obtained by cabling the knot $K$ by $\ell-1$ strands, inserting the idempotent $(-1)^{\ell-1}f_{\ell-1}$ and computing the Kauffman bracket at $t=t_k$. 

The closure of the idempotent $(-1)^{\ell-1}f_{\ell-1}$ in the skein module of the solid torus gives the polynomial $T_\ell\in \Z[x]$ where $x^\ell$ stands for $\ell$ parallel copies of the core and $T_\ell$ is defined by 
$$T_0=0, \quad T_1=1,\quad T_{\ell+1}+xT_{\ell}+T_{\ell-1}=0$$
One proves by induction that for any $\ell\in \Z$, 
\begin{gather} \label{eq:rec_rel} 
T_{k-\ell}=T_{k-1}T_\ell-T_kT_{\ell-1}.
\end{gather} 
Due to the "killing property" of $f_{k-1}$,  $\langle T_kT_{\ell-1}\rangle$ vanishes when it is evaluated at $t_k$. Indeed,  the idempotent property implies that anything outside the idempotent can be reduced up to a multiplicative factor to the standard closure $\langle f_{k-1}\rangle$ of the idempotent. Then the formula $$\langle f_{k-1}\rangle=(-1)^{k-1}\frac{t_k^k-t_k^{-k}}{t^2-t^{-2}}=0$$ shows the result. 

Let us prove now that $J^K_{k-1}=1$. A simple computation of fusion rule shows that changing a crossing of a bunch of $k-2$ strands with $f_{k-2}$ inserted with a single strand amounts in multiplying by $-1$ (evaluated at $t_k$), see Figure \ref{fig:transparent}. 

\begin{figure}[htbp]
\centering
 \def\svgwidth{6cm}
 \executeiffilenewer{transparent.svg}{transparent.pdf}%
 {inkscape -z -D --file=transparent.svg %
 --export-pdf=transparent.pdf --export-latex}%
 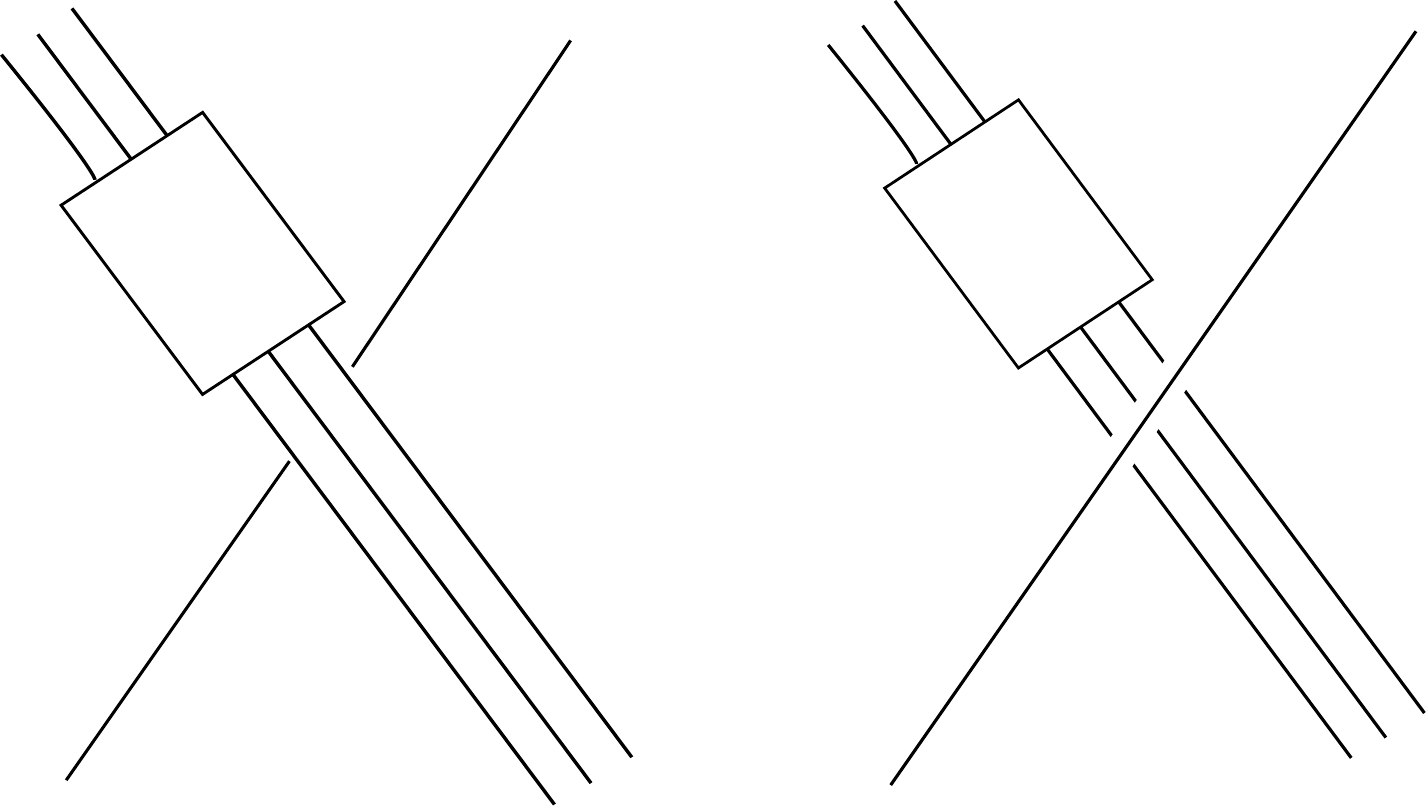%

  \caption{Transparent idempotents}
  \label{fig:transparent} 
\end{figure}

As the knot $K$ can be untied with an even number of crossing changes, we get $$J^K_{k-1}(t_k)=\frac{t_k^{2k-2}-t_k^{-2k+2}}{t_k^2-t_k^{-2}}=\frac{\sin(\pi-\pi/k)}{\sin(\pi/k)}=1.$$
By a similar reasoning, one shows the "transparency property" of the idempotent $f_{k-2}$ and deduces that $\langle T_{k-1}T_\ell\rangle$ is equal to $\langle T_\ell\rangle$ when evaluated at $t_k$.  Indeed, assume the knot $K$ is cabled by two idempotents $f_{k-2}$ and $f_{l-1}$. If we change a crossing between the copy cabled by $f_{k-2}$ and the copy cabled by $f_{l-1}$, the result is multiplied by $(-1)^{l-1}$. By changing an even number of crossings, we can separate the two parallel copies and get $$\langle T_{k-1}T_\ell\rangle=\langle T_{k-1}\rangle\langle T_\ell\rangle=\langle T_\ell\rangle .$$ 
So by Equation (\ref{eq:rec_rel}), $\langle T_{k-\ell} \rangle = - \langle T_\ell\rangle $, which ends the proof.
\end{proof}

\section{Topological invariants}\label{sec:top-inv}
In this section, we will review some well-known facts about representation spaces, Chern-Simons invariants and Reidemeister torsion.
Unless these theories make sense for any 3-manifolds with boundary, we will restrict ourselves to either closed 3-manifold or knot exteriors.

\subsection{Representation spaces} \label{sec:rep-spaces}

For any compact manifold $N$, the moduli space $$\mo ( N)=\textrm{Hom}(\pi_1(N),\su)/\su$$ is a real algebraic variety. Let $\rho:\pi_1(N)\to \su$ be a representation. We denote by $\Ad_\rho$ the vector space $\lu$ where $\gamma\in\pi_1(N)$ acts on $\xi\in\lu$ by $\gamma.\xi=\Ad_{\rho(\gamma)}\xi$. Let $H^*_\rho(N)$ be the cohomology of $N$ with twisted coefficients in $\Ad_{\rho}$.
It is well-known that for any $\rho \in \mo (N)$, the Zariski tangent space at $\rho$ is identified with the first cohomology group $H_\rho^1 (N)$ of $N$ with coefficient in $\Ad \rho$, cf. as instance \cite{hk} p.40.

Consider on $\lu$ the Euclidean pairing $\langle A,B\rangle=\tr(A^*B)$. When $N$ is a $n$-dimensional oriented manifold with possibly empty boundary, the Poincar{\'e} duality in the twisted case gives a non-degenerate pairing 
\begin{equation}\label{eq:poincare-dualite}
H^k_\rho(N)\times H^{n-k}_\rho(N,\partial N)\to\R.
\end{equation}
\subsubsection{The torus case} 
Let $\Si$ be an oriented 2-dimensional torus. The smooth part $\mo ^s ( \Si)$ of $\mo ( \Si)$ consists on the non-central representations. 
Let $\pi$ be the map  from $E= H_1(\Si, \R)$ onto $\mo ( \Si)$ defined by
$$\pi(x)(\gamma)=\exp((\gamma\cdot x) D)\quad \forall \gamma\in H_1(\Sigma,\Z)$$
where $\cdot$ stands for the intersection product and $D$ is the diagonal matrix with entries $2i\pi,-2i\pi$. Let $R = H_1( \Si, \Z)$ and consider the action of $R \rtimes \Z_2$ on $E$ as in Section \ref{sec:geom-quant}. The projection $\pi$ factors through a bijection between the quotient of $E$ by $R \rtimes \Z_2$ and  $\mo(\Si)$.

Let us describe the tangent map to $\pi$ at $x \in H_1( \Si , \R)$. First $H_1( \Si, \R)$ is isomorphic with $H^1( \Si, \R)$ by the  Poincar{\'e} duality mapping $y\in H_1(\Si,\R)$ to the cocycle $z\to z\cdot y$. Then, the inclusion $\R\to \Ad_{\pi(x)}$ mapping $t$ to $tD$ induces a map $H^1(\Sigma,\R)\to H^1_{\pi(x)}(\Sigma)$. 
The tangent map is the composition 
$$T_x\pi:H_1(\Si,\R) \simeq H^1( \Si, \R) \to H^1_{\pi(x)}(\Si).$$
When $\pi (x) $ is not central, that is $x \!\notin\! \frac{1}{2} R$, $T_x \pi$  is easily shown to be an isomorphism. Furthermore, the action of $R \rtimes \Z_2$ on $E^s = E \setminus \frac{1}{2} R$ being proper and free, the quotient $E^s / R \rtimes \Z_2$ is smooth and  $\pi$ induces a diffeomorphism from $E^s / R\rtimes \Z_2$ to  $\mo ^{s} ( \Si)$. 

Finally, if $\rho \in \mo(\Si)$ is not central, the pairing on $H^1_{\rho}(\Si)\simeq H_1(\Sigma,\R)$ given by Equation \eqref{eq:poincare-dualite} is a symplectic form whose integral over a fundamental domain for $H_1(\Sigma,\Z)$ is $\tr(D^*D)=8\pi^2$. So it is equal to $2\pi\omega$, where $\om$ is the symplectic form considered in Section \ref{sec:geom-quant}. 

\subsubsection{The knot exterior case}

Let $K$ be a knot in $S^3$ and denote as previously its exterior by $E_K$ and its peripheral torus by $\Si$.  Since $\Si$ is contained in $E_K$, we have a restriction  map $r:\mo(E_K)\to\mo(\Si)$. The set $\mo(E_K)$ is the disjoint union of $\mo ^{\ab}(E_K)$ and $\mo ^{\irr}(E_K)$, which consist respectively in the abelian and irreducible representations.

The moduli space $\mo ^{\ab}(E_K)$ is homeomorphic to a closed interval. The restriction of $r$ to $\mo ^{\ab}(E_K)$ is injective and its image is $\pi ( [0,1] \la)$, where $\la \in H_1(\Si, \Z)$ is a longitude.  
An abelian representation $\rho \in \mo ( E_K)$ is said to be regular if the eigenvalues of $\rho ( \mu)^2 $, with $\mu$ a meridian of $K$,  are not root of the Alexander polynomial of $K$. 

We say that an irreducible representation $\rho \in \mo(E_K)$ is regular if its restriction to the boundary $r( \rho)$ is not central and if the vector space $H^1_\rho(E_K)$ is one-dimensional. If it is the case, the morphism $H^1_\rho ( E_K) \rightarrow H^1_{r ( \rho)}(\Si)$ induced by the inclusion $\Si \subset E_K$ is injective. Conversely, if $\rho$ satisfies this last condition and $r( \rho)$ is not central, then $\rho$ is regular, cf. \cite{hk} p. 42. 

The set $\mo^s ( E_K) $ consisting of irreducible regular representations is a smooth open one-dimensional submanifold of $\mo(E_K)$. Its tangent space at $\rho$ is $H^1_\rho (E_K)$. Furthermore the map $r$ restricts into an immersion from $\mo^ s(E_K)$ to $ \mo ( \Si)$, the tangent map at $\rho \in \mo ^s (E_K)$ being the morphism $H^1_\rho ( E_K) \rightarrow H^1_{r ( \rho)}(\Si)$ induced by the inclusion $\Si \subset E_K$.

\subsubsection{Symmetries on representation spaces}
Denote by $\rho_{\pm 1}\in \mo(E_K)$ the central representations mapping the meridian to $\pm 1$. As the product of any representation with a central one is again a representation, one gets an action of $\Z_2$ on $\mo(E_K)$. Its quotient $\bmo(E_K)$ can be identified with the set of representations of $E_K$ in $\so$. 
Indeed two representations in $\mo(E_K)$ are the same as representations of $\so$ if and only if they differ by a central representation. Moreover, any representation in $\so$ lifts to $\su$, as the obstruction to the existence of such a lifting is in $H^2(E_K,\Z_2)=0$. 

Consider the quotient $\bmo(\Si)=E/\Gamma'$ where $\Gamma'=R'\rtimes \Z_2$ and $R' = \mu \Z \oplus \frac{1}{2} \lambda \Z$. This quotient is in intermediate position between the representation spaces of $\Si$ in $\su$ and $\so$. It has the nice property that the restriction map descends to a map $\overline{r}$ in the following diagram:

$$\xymatrix{
\mo(E_K)\ar[r]^r\ar[d]& \mo(\Si)\ar[d]\\
\bmo(E_K)\ar[r]^{\overline{r}}& \bmo(\Si)}$$
The topological invariants introduced in the next subsection will descend to these quotients. 
\subsection{Chern-Simons invariants}\label{sec:chern-simons-invar}

\subsubsection{Generalities}

Let $M$ be a manifold with a possibly non-empty boundary and of dimension not greater than $3$.  Then any representation $\rho\in\mo(M)$ is the holonomy of a flat connection $\alpha\in \Omega^1 (M,\lu)$. This connection form is unique up to gauge transformation, that is any other one has the form 
$$\alpha^g=g^{-1}\alpha g+g^{-1}\mathrm{d}g$$ 
for some $g:M\to\su$.
In other terms, we may identify the moduli space $\mo ( M)$ with the quotient of the space $\Omega^1_{\flat}(M,\lu)$ of flat connections by the action of $\Ci(M, \su)$.  

Assume that $M$ is compact and 3-dimensional. We define the Chern-Simons functional by the following formula:
\begin{equation}\label{def-CS}
\CS(\al)=\exp \Bigl( \frac{1}{12i\pi}\int_{M}\tr(\alpha\wedge\alpha\wedge\alpha) \Bigr), \qquad \al \in \Om^1_{\flat} ( M, \lu)
\end{equation}
If $M$ is closed, a well-known computation shows that $\CS( \al^g) = \CS ( \al)$, cf \cite{freed-cs}. So we can define the Chern-Simon invariant of $\rho \in \mo (M)$ by  $\CS( \rho) = \CS ( \al)$.  

Assume now that the boundary $\Si$ of $M$ is not empty. Then Equation (\ref{def-CS}) does not give a gauge invariant quantity, but $$ \CS (\al^g) \CS( \al)^{-1} =  c(a,h)$$ with $a$ and $h$ the restrictions of  $\al$ and $g$ respectively  to $\Si$ and 
$$ c (a,h)  = \exp \Bigl( iW(h) +\frac{1}{4i\pi} \int_{\Si} \tr(a\wedge \mathrm{d} h h^{-1}) \Bigr)$$
where $W(h)=\frac 1\pi \int_{M}\tr((g^{-1}\mathrm{d}g)^{\wedge 3})$ is the Wess-Zumino-Witten functional of $h$ which is independent on $g$ modulo $2\pi$.

The vector space  $\Om^1 ( \Si , \lu)$ has a symplectic form 
$$ \Om (\al , \beta) = - \frac{1}{2 \pi} \int_{\Si} \tr (\al \wedge \beta ) $$
The quotient of the subset of all flat connections by the gauge group action can be viewed as a symplectic reduction. The trivial line bundle $\Om^1 ( \Si , \lu) \times \C$ equipped with the connection 
$ \Theta_A (\alpha) = \frac{1}{2} \Om (A, \al)  $
is a prequantum bundle. 
The gauge group $\Ci ( \Si, \su)$ acts on $\Om^1 ( \Si , \lu) \times \C$ by $$h.(\al,z) = ( \al^h, c(\al,h) z ).$$ 
Restricting to the flat connections and dividing by this action, we get a line bundle $L_{\CS} \rightarrow \mo ( \Si)$, called the Chern-Simons line bundle. 

Let $r$ be the restriction map from  $\mo ( M)$ to $\mo ( \Si)$ induced by the inclusion of $\Si$ into $M$. By construction, for any representation $\rho \in \mo (M)$, the family $\CS(\al)$ where $\al$ runs over the flat connections with holonomy $\rho$,  defines a vector $\CS(\rho)$ in the fiber at $r( \rho)$ of $L_{\CS}$. Furthermore, the connection descends to the Chern-Simons bundle and the section $\rho \rightarrow \CS ( \rho)$ of $r^* L_{\CS}$ is flat. Here to avoid the singularities, we should restrict everything to the subspace of irreducible connections when $\Si$ has a genus $\geqslant 2$, and to the non-central ones when $\Si$ has genus 1.  We refer the reader to \cite{freed-cs} for more details. 

Consider two compact oriented manifolds $M_1$ and $M_2$ with boundary identified with $\Si$. Let $M$ be the gluing of $M_1$ with $-M_2$ along $\Si$. Then one checks easily that for any $\rho \in \mo (\Si)$, 
\begin{gather} \label{eq:gluing_Chern_Simons}
 \CS( \rho) = \langle \CS ( \rho_1), \CS ( \rho_2) \rangle
\end{gather}
where $\rho_1$ and $\rho_2$ are the restriction of $\rho$ to $M_1$ and $M_2$ respectively. The bracket in the right hand side is the scalar product in the fiber of $L_{\CS}$ at the restriction of $\rho$ to $\Si$. 

\subsubsection{Comparison between bundles} 

Assume now that $\Si$ is a torus so that $\mo ( \Si) \simeq E/ R \rtimes \Z_2$. Recall that we introduced in Section \ref{sec:geom-quant} a prequantum bundle $L \rightarrow E$. 

\begin{lem} 
The quotient of $L$ by $\Gamma=R \rtimes \Z_2$ is isomorphic to the Chern-Simons bundle $L_{\CS}$. The restrictions  of these two bundles over $\mo ^s ( \Si)$ have isomorphic connections. 
\end{lem}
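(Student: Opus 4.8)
The plan is to build an explicit $\Gamma$-equivariant isomorphism between the trivial prequantum bundle $L \to E$ and the pull-back $\pi^* L_{\CS}$, and then to pass to the quotient by $\Gamma = R \rtimes \Z_2$. Fixing a flat structure on $\Si$, let $\eta_x$ be, for $x \in E = H_1(\Si,\R)$, the translation-invariant real $1$-form with periods $\int_\gamma \eta_x = \gamma \cdot x$, and set $\al_x = D \otimes \eta_x \in \Om^1(\Si,\lu)$; this depends linearly on $x$. Since $\al_x$ takes values in the abelian subalgebra $\R D$ it is flat, and its holonomy along $\gamma \in R$ is $\exp\bigl((\gamma \cdot x) D\bigr) = \pi(x)(\gamma)$, so $x \mapsto \al_x$ is a linear section of the space of flat connections over $E$ lifting $\pi$. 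Composing with the quotient map in the definition of $L_{\CS}$, the assignment $x \mapsto \tilde\si(x) := \bigl[(\al_x,1)\bigr]$ is a nowhere-vanishing section of $\pi^* L_{\CS}$, hence a trivialization $\Phi : L = E \times \C \to \pi^* L_{\CS}$, $(x,v) \mapsto v \cdot \tilde\si(x)$.

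I would first check that $\Phi$ is connection-preserving over the smooth locus. Because $x \mapsto \al_x$ is linear, its differential at $x$ in the direction $v$ is $\al_v$, so the covariant derivative of $\tilde\si$ for the connection $\tfrac1i\Theta$ of $\Om^1_\flat(\Si,\lu) \times \C$ is $\nabla_v \tilde\si(x) = \tfrac1{2i}\,\Om(\al_x,\al_v)\,\tilde\si(x)$. Now $\Om(\al_x,\al_v) = \om(x,v)$: this follows from the normalization established in Section~\ref{sec:rep-spaces} --- the pull-back under $\pi$ of the Poincar\'e-duality pairing on $H^1_\rho(\Si)$ is $2\pi\om$, and comparing it with $\Om$ accounts for the remaining factor $-\tfrac1{2\pi}$ and a sign coming from $\langle\,\cdot\,\wedge\,\cdot\,\rangle = -\tr(\,\cdot\,\wedge\,\cdot\,)$ on $\lu$. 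On the other side, the constant section $\mathbf 1$ of $L$ satisfies $\nabla_v \mathbf 1 = \tfrac1i \al_x(v)\,\mathbf 1 = \tfrac1{2i}\om(x,v)\,\mathbf 1$. Hence $\Phi$ intertwines $\nabla^L$ with $\pi^*\nabla^{L_{\CS}}$ over $E^s$, which is where the Chern-Simons connection is defined.

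The heart of the matter is the $\Gamma$-equivariance of $\Phi$. For a translation by $\gamma \in R$, the connections $\al_{x+\gamma}$ and $\al_x$ have equal holonomy $\pi(x)$, so $\al_{x+\gamma} = \al_x^{g_\gamma}$ for a gauge transformation $g_\gamma : \Si \to U(1) \subset \su$ --- concretely an exponentiated (multivalued) primitive of $D \eta_\gamma$ with winding numbers dual to $\gamma$. One then computes the cocycle $c(\al_x,g_\gamma) = \exp\bigl(i W(g_\gamma) + \tfrac1{4i\pi}\int_\Si \tr(\al_x \wedge \mathrm{d}g_\gamma\, g_\gamma^{-1})\bigr)$. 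The Wess-Zumino-Witten term vanishes, $W(g_\gamma) = 0$, because $g_\gamma$ takes values in an abelian subgroup so that $(g_\gamma^{-1}\mathrm{d}g_\gamma)^{\wedge 3} = 0$; and the remaining integral equals $\tfrac1{4i\pi}\tr(D^2)\,(x\cdot\gamma)$, which by the same normalization is precisely the Heisenberg cocycle $\exp\bigl(\tfrac i2 \om(\gamma,x)\bigr)$ of \eqref{eq:prod_Heisenberg}. Thus $[(\al_{x+\gamma},1)]$ and $[(\al_x,1)]$ differ exactly by the scalar by which the Heisenberg element $(\gamma,1)$ transports the fiber of $L$ over $x$ to the fiber over $x+\gamma$, so $\Phi$ intertwines the two actions of $R$. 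For the generator of $\Z_2$ one has $\al_{-x} = \al_x^{w}$ with $w = \left(\begin{smallmatrix} 0 & -1 \\ 1 & 0 \end{smallmatrix}\right)$ a \emph{constant} gauge transformation (as $w^{-1}Dw = -D$), and $c(\al_x,w) = 1$ since $\mathrm{d}w = 0$; this matches the trivial lift of $\Z_2$ to $L$. Therefore $\Phi$ is $\Gamma$-equivariant, descends to an isomorphism $L = (E\times\C)/\Gamma \xrightarrow{\ \sim\ } \pi^*L_{\CS}/\Gamma = L_{\CS}$ over $\mo(\Si) = E/\Gamma$, and restricts to a connection-preserving isomorphism over $\mo^s(\Si)$ by the previous paragraph. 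At the four central points both bundles are orbifold line bundles and $\Phi$, being globally defined on $E$, descends there as well.

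I expect the main obstacle to be the cocycle identity $c(\al_x,g_\gamma) = \exp\bigl(\tfrac i2\om(\gamma,x)\bigr)$: one must exhibit the large gauge transformation $g_\gamma$ explicitly and then carry the various factors of $2\pi$ through the three normalizations in play --- $\om$ is $4\pi$ times the intersection pairing, $\Om = -\tfrac1{2\pi}\int_\Si \tr$, and $c$ carries a $\tfrac1{4i\pi}$ --- so that everything, including the signs (which depend on the orientation of $\Si$), matches on the nose. By comparison, the vanishing of the Wess-Zumino-Witten term and the connection computation are routine.
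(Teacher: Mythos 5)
Your proposal is correct and follows essentially the same route as the paper: represent points of $E$ by diagonal flat connections $D\eta_x$ (the paper's $bD$), realize the lattice and $\Z_2$ actions by the winding diagonal gauge transformations and the constant Weyl element, and identify the Chern-Simons cocycle $c$ with the Heisenberg cocycle of (\ref{eq:prod_Heisenberg}); the sign/normalization bookkeeping you flag is exactly what the paper also leaves implicit. You are in fact slightly more complete than the paper in verifying the connection statement explicitly, and the only soft spot — asserting $W(g_\gamma)\equiv 0$ because $g_\gamma$ is torus-valued — is harmless since one can choose a bounding manifold and an abelian-valued extension (or use independence of $W$ mod $2\pi$), a point the paper glosses over as well.
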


\begin{proof} 
We can represent any representation $\rho \in \mo ( \Si) $ by an element in $\Om^1_\flat( \Si, \su)$ of the form $b D $ with $D =\operatorname{diag} ( 2i\pi, -2i \pi)$ and  $b$ a real valued 1-form on $\Si$. It is unique up to a gauge transformation of the form $h_1 h_2$ where 
$$ h_1 = \begin{pmatrix} 0 &1\\-1&  0 \end{pmatrix} \text { or } \id, \qquad h_2 =  \begin{pmatrix}e^{2i\pi H}&0\\0&e^{-2i\pi H}\end{pmatrix} $$
where $H$ is a map from $\Si$ to $\R / \Z$. Identify $E$ with $H^1( \Si, \R)$ so that the de Rham class of $b$ defines an element of $E$ satisfying $\pi ( [b]) = \rho$. Observe furthermore that 
$$ (bD)^{h_1} = \pm b D, \qquad (bD)^{h_2} = ( b + d H) D
$$
which proves again that $\mo ( \Si)$ is the quotient of $E$ by $R \rtimes \Z_2$. 
One shows that $c( bD, h_1)= 1$ and 
\begin{xalignat*}{2} 
c( bD, h_2) = & \exp \Bigl( 2 i \pi \int_\Si b \wedge dH \Bigr) \\
 = & \exp \Bigl( \frac{i}{2} \om ( [b], [dH]) \Bigr)   
\end{xalignat*}
Comparing with equation (\ref{eq:prod_Heisenberg}), the result follows. 
\end{proof}

Assume now that $M$ is a knot exterior $E_K$, so that $\Si$ is the peripheral torus of the knot. Then the map sending $\rho \in \mo (E_K)$ to $\CS ( \rho)$ is a section of the bundle $r^* L_{CS} \rightarrow \mo ( E_K)$. Its restrictions to $\mo^{s} ( E_K)$ and $\mo^{\ab} (E_K)$ are flat. 

\subsubsection{Symmetry}

The group $\Ga = R \rtimes \Z_2$ is an index 2 normal subgroup of $\Gamma'=R'\rtimes \Z_2$. Here $R' = \mu \Z \oplus \frac{1}{2} \lambda \Z$.  We extend the action of $\Ga$ on the bundle $L$ to $\Ga'$ in such a way that 
$$ (\la /2  , 1  ) .( x, v)  =  (  x +  \la/2 , e^{\frac{i}{2} \om (\la/2, x )} v) , \qquad  (x, v) \in L
$$  
Denote by $\tau$ and $\tau_L$ respectively the actions of the generator of $\Ga'/ \Ga$ on $\mo ( \Si ) \simeq E/ \Ga$ and $L_{\CS} \simeq L / \Ga$. We denote $\si$ the action of $\rho_{-1}$ on $\mo (E_K)$.
\begin{lem} \label{sec:symmetry-CS}
For any representation $\rho\in\mo(E_K)$ one has
$$\CS(\si (\rho) )=\tau_L (\CS(\rho))$$
\end{lem}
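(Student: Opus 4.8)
The idea is to realise the symmetry $\si$, which is multiplication by the central representation $\rho_{-1}$ and hence \emph{not} a gauge transformation, by a sign‑ambiguous gauge transformation on $E_K$ (valued in $\su$ up to a global sign), and to observe that its restriction to $\Si$ is exactly the gauge transformation defining $\tau_L$.

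Concretely, fix a flat connection $\al\in\Om^1_\flat(E_K,\lu)$ with holonomy $\rho$; after a gauge transformation of $E_K$ — every gauge transformation of $\Si$ extends, since $\su=S^3$ is $2$‑connected — we may assume $\al|_\Si=bD$ with $b$ a closed real $1$‑form on $\Si$ and $D=\operatorname{diag}(2i\pi,-2i\pi)$. As $H^1(E_K,\Z)\simeq\Z$ is generated by the meridian class, choose a closed integral $1$‑form $\theta$ on $E_K$ with $\oint_\mu\theta=1$, a multivalued primitive $\phi\colon E_K\to\R$, and put $g=\exp(\tfrac12\phi D)$. Because $\exp(\tfrac12 D)=-\id$ lies in the centre, $g$ is a well‑defined map $E_K\to\so$, the form $g^{-1}dg=\tfrac12\theta D$ is single‑valued, and $\al^g=g^{-1}\al g+g^{-1}dg$ is a genuine flat connection on $E_K$. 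Travelling once around a loop $\gamma$ multiplies $g$ by $(-\id)^{[\gamma]}$, so the holonomy of $\al^g$ around $\gamma$ is $(-1)^{[\gamma]}$ times a fixed conjugate of $\rho(\gamma)$; hence $\al^g$ represents $\rho_{-1}\rho=\si(\rho)$, and $\CS(\si(\rho))=[(\al^g|_\Si,\,\CS(\al^g))]$ in the fibre of $L_{\CS}$ over $r(\si(\rho))$.

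It remains to compare $\CS(\al^g)$ with $\CS(\al)$. The gauge‑change identity recalled in Section~\ref{sec:chern-simons-invar} gives $\CS(\al^g)=c(\al|_\Si,g|_\Si)\,\CS(\al)$; its derivation applies to our $g$ as well, since only $g^{-1}dg$ and $dg\,g^{-1}|_\Si$ enter it, and the Wess--Zumino term $W(g|_\Si)$ vanishes, $(g^{-1}dg)^{\wedge 3}$ being a constant multiple of $\theta\wedge\theta\wedge\theta=0$. Now $g|_\Si=\operatorname{diag}(e^{i\pi\phi|_\Si},e^{-i\pi\phi|_\Si})$ is the $\so$‑gauge transformation $\operatorname{diag}(e^{2i\pi H},e^{-2i\pi H})$ with $H=\tfrac12\phi|_\Si$ valued in $\R/\tfrac12\Z$ and $[dH]=\tfrac12[\theta|_\Si]$, that is $\la/2$ under the identification $H^1(\Si)\simeq H_1(\Si)=E$; thus $g|_\Si$ represents the translation by $\la/2$, the generator of $\Ga'/\Ga$, and by the comparison lemma of Section~\ref{sec:chern-simons-invar} (now with $H$ half‑integral) it acts on $L_{\CS}\simeq L/\Ga$ exactly as the Heisenberg element $(\la/2,1)$, i.e.\ as $\tau_L$. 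Therefore
$$\CS(\si(\rho))=[((\al|_\Si)^{g|_\Si},\,c(\al|_\Si,g|_\Si)\,\CS(\al))]=\tau_L([(\al|_\Si,\,\CS(\al))])=\tau_L(\CS(\rho)).$$

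The delicate point is this last bridge: one must check that $[d(\tfrac12\phi|_\Si)]$ really equals $\la/2$ for the Poincar\'e‑duality and orientation conventions fixed in Sections~\ref{sec:geom-quant} and~\ref{sec:rep-spaces} — this is what pins down the sign $\CS(\si(\rho))=+\tau_L(\CS(\rho))$ — and that the Polyakov--Wiegmann variation formula for $\CS$ on a manifold with boundary survives for a gauge transformation defined only up to sign. Both are routine, but a stray sign here would be fatal; everything else is formal once $\si$ has been exhibited as the gauge transformation $g$. As a consistency check one may note that $\rho\mapsto\CS(\rho)$ and $\rho\mapsto\tau_L^{-1}\CS(\si(\rho))$ are both flat sections of $r^*L_{\CS}$ over the regular locus, so the lemma reduces to an equality of locally constant functions, which the computation above evaluates to $1$.
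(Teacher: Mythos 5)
Your proof is correct and is essentially the paper's own argument in different packaging: the paper's modified connection $\al'$ is exactly your $\al^g$ with $g=\exp(\tfrac12\phi D)$, where $d\phi$ is chosen supported in a collar of a Seifert surface (so Poincar\'e dual to the meridian class, just like your $\theta$), and the central ambiguity $e^{D/2}=-\id$ plays the same role in both. The only difference is that you compute the Chern--Simons change via the boundary cocycle $c(a,h)$ (with the Wess--Zumino term killed by $(g^{-1}dg)^{\wedge 3}=0$) and then quote the comparison lemma to recognize the Heisenberg element $(\la/2,1)$, whereas the paper evaluates the cubic integral over $F\times[0,1]$ directly and applies Stokes; the sign/orientation check you defer as ``routine'' is handled at exactly the same implicit level in the paper's ``one recognizes the action of $\tau_L$''.
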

\begin{proof}
Let $F\subset E_K$ be a Seifert surface of $K$ such that $F\cap \Si=\partial F=\lambda$. Let $\alpha\in\Omega^1_\flat(E_K,\lu)$ be a flat connection representing $\rho$. Thicken $F$ such that there is an embedding $j:F\times [0,1]\to E_K$. Then, up to gauge transformation, one can suppose that 
\begin{itemize}
\item[-] 
$\alpha=bD$ on the boundary where $b\in\Omega^1(\Si,\R)$ and $D$ is the diagonal matrix with entries $2i\pi$ and $-2i\pi$.
\item[-]
$j^*\alpha=p^*\beta$ where $p:F\times[0,1]\to F$ is the projection on the first factor and $\beta$ is a flat connection in $\Omega^1_\flat(F,\lu)$.
\end{itemize}

Let  $\phi$ be a smooth function on $[0,1]$ which is equal to 0 on a neighborhood of 0 and 1 on a neighborhood of 1. Then one can construct a connection $\alpha'$ which represents $\tau(\rho)$ in the following way: $\alpha'$ coincide with $\alpha$ outside $F\times[0,1]$ and satisfies on $F \times [0,1]$
$$j^*\alpha'=e^{-D\phi/2}(p^*\beta) e^{D\phi/2}+\frac{1}{2}D\mathrm{d}\phi.$$ 
Using the formula defining the Chern-Simons invariant, we get 
$$\CS(\alpha')=\CS(\alpha)\exp(\theta) \quad \text{ with } \theta=\frac{3}{12i\pi}\int_{F\times[0,1]} \tr \Bigr( \frac{D}{2}\mathrm{d}\phi\wedge \beta\wedge \beta \Bigr).$$ 
Integrating over $[0,1]$ and using the flatness of $\beta$, we get 
$$\theta=\frac{i}{8\pi}\int_F \tr(D\mathrm{d}\beta)=\frac{i}{8\pi}\int_{\partial F}b\tr(D^2)=-i\pi\int_{\lambda}b=\frac{i}{2}\omega(\lambda/2,[b]).$$
One recognizes the action of $\tau_L$ on the pair $([b],\CS(\alpha))$ and the lemma is proved.
\end{proof}

\subsection{Reidemeister torsion} \label{sec:reidemeister-torsion}

Let $M$ be an oriented manifold with possibly empty boundary. We will be interested in the cases when $M$ is a closed 3-manifold, a 2-torus or a knot exterior although Reidemeister torsion makes sense in the general case. 
\subsubsection{Construction}
Given a real vector space $V$, we denote by $\dens{V}$ the vector space of densities on $V$. 
Consider on $\lu$ the Euclidean pairing $\langle A,B\rangle=\tr(A^*B)$ and denote by $\nu\in \dens{\lu}$ the Euclidean density. 

Suppose that $M$ is homeomorphic to a finite connected CW-complex and let $\rho:\pi_1(M)\to \su$ be a representation. We denote by $\Ad_\rho$ the vector space $\lu$ where $\gamma\in\pi_1(M)$ acts on $\xi\in\lu$ by $\gamma.\xi=\Ad_{\rho(\gamma)}\xi$.

For a lighter notation, we will denote by $C^*_\rho(M)$ the complex $C^*(M,\Ad_{\rho})$. It is a finite complex isomorphic to a direct sum of copies of $\lu$ (one for each cell of $M$). Define $$\dens{C^*_\rho(M)}=\bigotimes_i \dens{C^i_\rho(M)}^{(-1)^{i+1}}$$
This line has a generator obtained by taking a convenient tensor product of copies of $\nu$ and its inverse, this generator is well-defined up to sign. 
Using the well-known isomorphism between $\dens{C^*_\rho(M)}$ and $\dens{H^*_\rho(M)}$ we obtain a generator of the latter space that we denote by $\tor(M,\rho)$. This generator does not depend on the way $M$ is presented as a cellular complex.

The torsion $\T(M,\rho)$ is constructed from $\textrm{Tor}_\rho(M)$ by a procedure which depends on the pair $(M,\rho)$. We review here the cases which occur in this article.
As $M$ is connected, one may view $H^0_\rho(M)$ as a subspace of $\lu$. Hence it inherits from $\lu$ an Euclidean structure and a corresponding volume element denoted by $v(N,\rho)\in \dens{H^0_\rho(M)}^{-1}$. 
Using the pairing on $\lu$ and Poincar{\'e} duality, we see that for a manifold $M$ of dimension $n$ and for $k\le n$, there is a natural isomorphism $H^k_\rho(M)\simeq H^{n-k}_\rho(M,\partial M)^*$. Hence, if $M$ is closed, we can associate to $v(M,\rho)$ a density $\nu(M,\rho)$ in $\dens{H^n_\rho(M)}$. 

\begin{enumerate}
\item
If $M$ is closed 3-manifold, we will suppose that $H^1_\rho(N)=0$ which means that $\rho$ is (infinitesimally) isolated in $\mo(M)$. By Poincar{\'e} duality, one has also $H^2_\rho(N)=0$. 
Hence we have an element $\tor(M,\rho)$ in $\dens{H^*_\rho(M)}=\dens{H^0_\rho(M)}^{-1}\otimes\dens{H^3_\rho(M)}$. We define $\T(M,\rho)\in \R$ by the equation $$\tor(M,\rho)=v(M,\rho)\nu(M,\rho)\T(M,\rho).$$ Observe that if $\rho$ is irreducible, the normalizations $v$ and $\nu$ are useless.
\item
If $M$ is a torus, we have $\tor(M,\rho)\in \dens{H^0_\rho(M)}^{-1}\otimes\dens{H^1_\rho(M)}\otimes\dens{H^2_\rho(M)}^{-1}$. Define in that case $\T(M,\rho)$ by the formula $$\tor(M,\rho)=v(M,\rho)\T(M,\rho)\nu(M,\rho)^{-1}.$$
It is well-known that the density $\T(M,\rho)$ coincides with the symplectic density associated to the Poincar{\'e} pairing on $H^1_\rho(M)$ that we described in Subsection \ref{sec:rep-spaces}, see \cite{witten-2d} p.187.
\item
If $M$ is a knot exterior and $\rho$ is regular abelian representation with non-central restriction to the boundary then 
we have $H^2_\rho(M)=H^3_\rho(M)=0$. So $\tor(M,\rho)\in \dens{H^0_\rho(M)}^{-1}\otimes\dens{H^1_\rho(M)}$. We define $\T(M,\rho)$ by 
$$ \tor(M,\rho)=v(M,\rho)\T(M,\rho)
$$
It is a density on the one-dimensional space $H^1_\rho(M)$. It can be computed in terms of the Alexander polynomial, cf. Section \ref{sec:examples}. 
\item 
If $M$ is a knot exterior and $\rho$ is an regular irreducible representation then we have $H^0_\rho(M)=H^3_\rho(M)=0$. So $\tor(M,\rho)\in \dens{H^1_\rho(M)}\otimes\dens{H^2_\rho(M)}^{-1}$. Because of the regularity assumption, $H^1_\rho(M)$ is 1-dimensional,  the map $$r^*:H^1_\rho(M)\to H^1_{r(\rho)} (\partial M)$$ is injective and  the map $$r^*:H^2_\rho(M)\to H^2_{r(\rho)} (\partial M)$$ is an isomorphism, see \cite{hk}, p.42.
We define $\T(M,\rho)$ by
$$\tor(M,\rho)=\T(M,\rho)(r^*)^{-1}(\nu(\partial M,\rho)^{-1}).$$
It is a density on the one-dimensional space $H^1_\rho(M)$.
\end{enumerate}

To summarize, we obtained for any closed 3-manifold $M$ equipped with a representation $\rho$ such that $H^1_\rho (M)$ is trivial, a numerical invariant $\T(M,\rho)$.  Furthermore for any regular representation $\rho$ of some knot exterior $M$, we defined a density on $H^1_\rho(M)$.
\begin{rem}
The Reidemeister torsion at a point $\rho\in \mo(E_K)$ is defined through the adjoint representation $\Ad_{\rho}$. So it depends only on the projection of $\rho$ in $\bmo(E_K)$.
\end{rem}
\subsubsection{Examples} \label{sec:examples}

Let $a$ and $b$ be two coprime integers. The torsion of the lens spaces $L(a,b)$ was computed by Franz in \cite{franz}. The fundamental group of $L(a,b)$ is $\Z/a\Z$: for $n\in\Z/a\Z$, we let $\rho_n:\Z/a\Z\to \su$ be the representation mapping 
the generator to the matrix $\exp(nD/a)$. Denote by $b^*$ an inverse of $b\mod a$. Then $$\T(L(a,b),\rho_n)=\frac{16}{a} \Bigl| \sin \Bigl( \frac{2\pi n}{a} \Bigr) \sin \Bigl( \frac{2\pi b^* n}{a} \Bigr) \Bigr|.$$

Let $E_K$ be the complement of a knot $K$ in $S^3$. Denote by $\rho_q$ the abelian representation of $E_K$ mapping the meridian to the matrix $\exp(qD)$. Let $\Delta_K$ be the normalized Alexander polynomial. Then it is shown in Theorem 4 of \cite{milnor1} that:
\begin{gather} \label{eq:torsion_abelian}
\T(E_K,\rho_q)=\frac{4\sin^2(2\pi q)}{|\Delta_K(\exp(4i\pi q))|^2}2^{3/2}\pi  |r^* {\rm d} q|.
\end{gather}
Here $r$ is the restriction map from $\mo (E_K)$ to $\mo ( \Si)$, $p$ and $q$ are the coordinates on $\mo (\Si)$ such that one has $r(\rho)=\pi(p\mu+q\la)$. We use the same notation in the two following examples. 

Let $a$ and $b$ be two coprime integers and $E_{a,b}$ be the complement in $S^3$ of the torus knot with parameters $a,b$. The fundamental group of $E_{a,b}$ is 
$$  \pi_1( E_{a,b}) \simeq \langle x,y | \; x^a=y^b\rangle.$$ Any irreducible representation $\rho$ of $E_{a,b}$ is regular, its Reidemeister torsion being given by 
$$\T(E_{a,b},\rho)=\frac{16}{a^2b^2}\sin^2 \Bigl( \frac{\pi\ell}{a} \Bigr )\sin^2 \Bigl( \frac{\pi m}{b} \Bigr ) 2^{3/2}\pi |r^* \mathrm{d}p|.$$ 
where $m$ and $\ell$ are two integers such that $\tr(\rho(x))=2\cos(\pi \ell/a)$ and $\tr(\rho(y))=2\cos(\pi m/b)$. 

Let $E_8$ be the complement of the figure eight knot. Then any irreducible representation $\rho$ of $E_8$ is regular (see Proposition 4.19 in \cite{porti}) and
\begin{gather}\label{eq:torsion_8}
\T(E_8,\rho)=\frac{2^{3/2}\pi |r^* \mathrm{d}p|}{1-4\cos(4\pi q)}
\end{gather}
where $r( \rho)=\pi(p\mu+q\la)$.

\subsubsection{Gluing formula}
Let $E_1,E_2$ be two knot complements with boundaries identified to a torus $\Si$. 
Let $\rho_j$ be elements of $\mo(E_j)$ which are either regular abelian or regular irreducible and whose restrictions on $\mo(\Si)$ coincide with a representation $\rho$ which is not central. 
Corresponding to the decomposition $M=E_1\cup (-E_2)$, there is a short exact sequence of complexes induced by restriction maps: $$0\to C^*_{\tilde{\rho}}(M)\to C^*_{\rho_1}(E_1)\oplus C^*_{\rho_2}(E_2)\to C^*_{\rho}(\Si)\to 0.$$
This sequence provides the following isomorphism: 
$$\dens{H^*_\rho(M)}\otimes \dens{H^*_{\tilde{\rho}}(\Si)}\simeq \dens{H^*_{\rho_1}(E_1)}\otimes\dens{H^*_{\rho_2}(E_2)}.$$
The well-known formula linking the four torsions is 
\begin{equation}\label{mult-torsion}
\tor(E_1,\rho_1)\tor(E_2,\rho_2)=\tor(M,\tilde{\rho})\tor(\Si,\rho)\tor(H),
\end{equation}
 where $H$ is the long Mayer-Vietoris exact sequence, see \cite{milnor}, Theorem 3.2. A simple argument shows that the correction term $\tor(H)$ is trivial, see \cite{freed-torsion}, Lemma 1.18.

These considerations imply the following proposition:
\begin{prop} \label{prop:gluing-formula-Rei}
Let $M$ be a 3-manifold obtained by gluing two knot exteriors $E_1$ and $E_2$ along a torus $\Si$. Let $\tilde{\rho}$ be a representation in $\mo (M)$ which restricts to $\rho_1,\rho_2$ and $\rho$ on $E_1,E_2$ and $\Si$ respectively. Suppose that $\rho_1$ and $\rho_2$ are regular, one of the two is abelian, $\rho$ is not central, and $H^1_{\tilde{\rho}}(M)=0$. Then from the Mayer-Vietoris sequence, one has an isomorphism $$H^1_\rho(\Si)\simeq H^1_{\rho_1}(E_1)\oplus H^1_{\rho_2}(E_2)$$ induced by the two restriction maps $r_1$ and $r_2$. Denoting by $\pi_1,\pi_2$ the corresponding projections, one has:
$$\T(M,\rho) \T(\Si,\rho)=\pi_1^*\T(E_1,\rho_1)\wedge \pi_2^*\T(E_2,\rho_2)$$
\end{prop}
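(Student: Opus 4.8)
The engine of the proof is the multiplicativity formula \eqref{mult-torsion} together with the vanishing of its correction term $\tor(H)$ recalled just above. Everything else is a passage from the various $\tor$'s to the corresponding $\T$'s via the four normalizations (1)--(4) of Subsection~\ref{sec:reidemeister-torsion}, while keeping careful track of the auxiliary densities $v$ and $\nu$ through the Mayer--Vietoris sequence of the decomposition $M=E_1\cup(-E_2)$.

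First I would settle the cohomological picture. Since $\rho_1$ and $\rho_2$ agree on $\pi_1(\Si)$, whose image is non-central and hence lies in a well-defined maximal torus, an abelian $\rho_j$ must have image in that same torus; as $\pi_1(M)$ is generated by $\pi_1(E_1)$ and $\pi_1(E_2)$, if both $\rho_1$ and $\rho_2$ were abelian then $\tilde\rho$ would be abelian too. I set this case aside (it is discussed at the end and does not arise in the applications of this paper): then $\tilde\rho$ is irreducible and, say, $\rho_1$ is regular abelian while $\rho_2$ is regular irreducible. Hence $H^0_{\tilde\rho}(M)=H^1_{\tilde\rho}(M)=0$, and by Poincar{\'e} duality also $H^2_{\tilde\rho}(M)=H^3_{\tilde\rho}(M)=0$; so every connecting homomorphism in the long Mayer--Vietoris sequence vanishes and it breaks up into short exact sequences
$$0\to H^k_{\tilde\rho}(M)\to H^k_{\rho_1}(E_1)\oplus H^k_{\rho_2}(E_2)\to H^k_\rho(\Si)\to 0,\qquad k=0,1,2,3,$$
with vanishing first term. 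For $k=1$ this is exactly the claimed isomorphism $H^1_\rho(\Si)\simeq H^1_{\rho_1}(E_1)\oplus H^1_{\rho_2}(E_2)$ induced by $(r_1^*,r_2^*)$, and $\pi_1,\pi_2$ are the components of its inverse. For $k=0$, since $H^0_{\rho_2}(E_2)=0$, it reduces to the restriction isomorphism $H^0_{\rho_1}(E_1)\xrightarrow{\sim}H^0_\rho(\Si)$, which is an isometry for the Euclidean structures inherited from $\lu$, so that $v(E_1,\rho_1)$ and $v(\Si,\rho)$ correspond under it. For $k=2$ it reduces to the isomorphism $H^2_{\rho_2}(E_2)\xrightarrow{\sim}H^2_\rho(\Si)$ given by $r_2^*$, which is precisely the one entering the definition (4) of $\T(E_2,\rho_2)$; moreover $\nu(\partial E_2,\rho_2)=\nu(\Si,\rho)$ because $\partial E_2=\Si$ carries the representation $\rho$. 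The case $k=3$ is empty.

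Now I would substitute into \eqref{mult-torsion} (with $\tor(H)=1$) the expressions $\tor(E_1,\rho_1)=v(E_1,\rho_1)\,\T(E_1,\rho_1)$ from (3), $\tor(E_2,\rho_2)=\T(E_2,\rho_2)\,(r_2^*)^{-1}\bigl(\nu(\Si,\rho)^{-1}\bigr)$ from (4), $\tor(M,\tilde\rho)=\T(M,\tilde\rho)$ from (1) (the normalizations being trivial since $\tilde\rho$ is irreducible), and $\tor(\Si,\rho)=v(\Si,\rho)\,\T(\Si,\rho)\,\nu(\Si,\rho)^{-1}$ from (2), and read the resulting identity in the Mayer--Vietoris determinant line degree by degree. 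In degree $0$ the factor $v(E_1,\rho_1)$ on the left corresponds to $v(\Si,\rho)$ on the right and the two cancel; in degree $2$ the factor $(r_2^*)^{-1}(\nu(\Si,\rho)^{-1})$ corresponds to $\nu(\Si,\rho)^{-1}$ and again cancels (densities being insensitive to the sign occurring in the Mayer--Vietoris maps); degree $3$ is empty. What survives is the degree-$1$ identity
$$\pi_1^*\T(E_1,\rho_1)\wedge\pi_2^*\T(E_2,\rho_2)=\T(M,\tilde\rho)\,\T(\Si,\rho)$$
in $\dens{H^1_\rho(\Si)}$, the left-hand side being by construction the product of the densities $\T(E_1,\rho_1)$ and $\T(E_2,\rho_2)$ transported to $H^1_\rho(\Si)$ along the degree-$1$ isomorphism above. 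This is the asserted formula.

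The \emph{main difficulty} is precisely the bookkeeping of the last two paragraphs: one must verify scrupulously that the volume elements $v$, $\nu$ and the Poincar{\'e}-duality and $r^*$ identifications hard-wired into the normalizations (1)--(4) are exactly the data produced by the Mayer--Vietoris sequence, so that they really cancel and leave a relation between $\T$'s only. The remaining loose end is the case set aside above, where $\rho_1$ and $\rho_2$ are both abelian and $\tilde\rho$ is a non-central abelian representation: there $H^0_{\tilde\rho}(M)$ and $H^3_{\tilde\rho}(M)$ no longer vanish, an extra connecting isomorphism $H^2_\rho(\Si)\xrightarrow{\sim}H^3_{\tilde\rho}(M)$ appears, and the diagonal map $H^0_{\tilde\rho}(M)\to H^0_{\rho_1}(E_1)\oplus H^0_{\rho_2}(E_2)$ is no longer an isometry; one then uses the naturality of Poincar{\'e} duality under Mayer--Vietoris to see that the surviving factors $v(M,\tilde\rho)\,\nu(M,\tilde\rho)$ absorb exactly the left-over $v(\Si,\rho)\,\nu(\Si,\rho)^{-1}$, and the same conclusion follows.
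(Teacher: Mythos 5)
Your argument is correct and takes essentially the same route as the paper: apply the multiplicativity formula \eqref{mult-torsion} with trivial correction term and check that the normalizations $v$ and $\nu$ cancel through the Mayer--Vietoris identifications, the delicate point being the all-abelian case where $\nu(M,\tilde{\rho})$ cancels against $\nu(\Si,\rho)^{-1}$ via the boundary map $H^2_\rho(\Si)\to H^3_{\tilde{\rho}}(M)$, exactly as in your final paragraph. One small correction: that all-abelian case does arise in the paper's applications (it is precisely the abelian non-central contribution in Theorem \ref{theo:witt-conj}), so it cannot be dismissed as irrelevant --- though since you do treat it at the end, this does not affect the validity of your proof.
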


\begin{proof}
The proof follows directly from Equation \eqref{mult-torsion} by checking that the normalization terms cancel. The less obvious cancellation comes from the case when all representations are reducible. We check that the normalizations $\nu(M,\tilde{\rho})$ and $\nu(\Si,\rho)^{-1}$ cancel via the boundary map $\partial:H^2_\rho(\Si)\to H^3_{\tilde{\rho}}(M)$.
\end{proof}

\section{Witten conjectures and generalization}

\subsection{Knot state asymptotics}

Consider a knot $K$ with exterior $E _K$ and peripheral torus $\Si$. The state of $K$ is the vector $Z_k(E_K)$  given by (\ref{eq:etat_noeud}). It belongs to the vector space $\Hilb_k^{\alt}$ quantizing $E/ R \rtimes \Z_2$, where $E= H_1( \Si, \R)$ and $R = H_1(\Si, \Z)$. So it may be viewed as a $R \rtimes \Z_2$-invariant section over $E$, or as a $\Z_2$-invariant section over $E/R$ or even as a section of an orbifold bundle over the moduli space $\mo ( \Si) \simeq E/ R \rtimes \Z_2$. In the sequel we use these three different representations according to our needs.

\subsubsection{Microsupport} 
Recall first that $Z_k (E_K)$ is admissible, that is there exist $N$ and $C$ such that  
$$ \| Z_k (  E _K ) \| \leqslant C k^{N} $$ 
for any $k$ (see \cite{LJ1}, Section 5.5). Define the microsupport of any admissible family $(\Psi_k \in \Hilb_k , \; k \in \Z_{>0})$ as the subset $\MS (\Psi_k)$ of $E$ such that for any $x\in E$, $x$ does not belong to $\MS (\Psi_k)$ if and only if there exists a neighborhood $U$ of $x$ and a sequence of positive number $(C_N)$ such that for any $N$ and for any $k$
$$ | \Psi_k (y) | \leqslant C_N k^{-N}, \qquad \forall y \in U .$$
The microsupport is $R$-invariant. If the $\Psi_k$'s are alternate, it is $R \rtimes \Z_2$-invariant and so it can be viewed as a subset of $\mo (\Si) = E/ R\rtimes \Z_2$. In \cite{LJ1}, we conjectured the following statement: 

\begin{conj} \label{conj:microsupport} 
The microsupport of $(Z_k (E _K))$ is contained in $r ( \mo (E _K))$. 
\end{conj}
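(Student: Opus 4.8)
The plan is to obtain $\MS(Z_k(E_K)) \subset r(\mo(E_K))$ from an ellipticity argument based on the $q$-difference equations of the colored Jones polynomials and the semiclassical calculus of Toeplitz operators on $\Hilb_k$. First I would fix, using the $q$-holonomicity theorem of Garoufalidis and Le, a recursion $P(M,L,t)J^K = R(t,t^n)$ for the sequence $(J^K_n)$, where $M,L$ act on sequences by $(Mf)_n = t^{2n}f_n$ and $(Lf)_n = f_{n+1}$. Since the $\ell$-th coefficient of $Z_k(E_K)$ in the basis $(\Psi_\ell)$ of (\ref{eq:def_base_hilbk}) is $J^K_\ell(t_k)$ with $t_k = -e^{i\pi/2k}$, and the operators $M,L$ of (\ref{eq:def_base_hilbk}) realize on $\Hilb_k$ precisely these shift operators with $t$ specialized to $t_k$, the recursion turns into an identity $P Z_k(E_K) = R$ in $\Hilb_k^{\alt}$, with $P$ now a Toeplitz operator whose symbol is read off from $P(M,L,t_k)$ and $R$ an explicit family of vectors. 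A preliminary bookkeeping step is needed to clear the denominators of $R$ so that both sides are honest Toeplitz/state identities, while verifying that this does not create spurious zeros in the symbol of $P$.

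I would then identify the principal symbol of $P$. Up to $O(k^{-\infty})$, $M$ and $L$ are Toeplitz operators whose principal symbols are, respectively, the meridian and longitude eigenvalue functions $m,l$ on $\mo(\Si)$ (in the coordinates where a point is $\pi(p\mu + q\la)$, these are $m = e^{2i\pi q}$ and $l = e^{-2i\pi p}$, up to the normalization imposed by the Heisenberg action). As $t_k \to 1$, the principal symbol of $P$ is the function $\sigma_0(P) = P(m,l,1)$ on $\mo(\Si)$. The geometric heart of the matter is that the classical limit of the recursion is governed by the $A$-polynomial of $K$: its nonabelian factor cuts out the closure of $r(\mo^{\ir}(E_K))$ in the character variety of $\Si$, while the abelian factor $L-1$ cuts out $r(\mo^{\ab}(E_K))$; hence $\sigma_0(P)$ vanishes on all of $r(\mo(E_K))$. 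Independently, one knows from \cite{LJ1} (the abelian parts of Conjecture \ref{conj:asymptotic_knot_state}) that $R$ is a Lagrangian state with $\MS(R) \subset r(\mo^{\ab}(E_K)) \subset r(\mo(E_K))$.

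With these two facts the conclusion is immediate. Pick any $x \in E \setminus r(\mo(E_K))$; then $\sigma_0(P)(x) \neq 0$, so by the elliptic calculus of Toeplitz operators \cite{l3} there is a Toeplitz operator $Q$ defined microlocally near $x$ with $QP = \id + O(k^{-\infty})$ there. Applying $Q$ to $P Z_k(E_K) = R$ gives $Z_k(E_K) = QR + O(k^{-\infty})$ near $x$, and since $x \notin \MS(R)$ we conclude $Z_k(E_K) = O(k^{-\infty})$ on a neighborhood of $x$. As $x$ was an arbitrary point of the open set $E \setminus r(\mo(E_K))$ and the microsupport is closed, this proves $\MS(Z_k(E_K)) \subset r(\mo(E_K))$.

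The main obstacle is the geometric input of the second step: one must show that the symbol of the chosen recursion operator cuts out the \emph{full} image $r(\mo(E_K))$, not merely a proper subvariety, and that clearing denominators in the first step loses none of it. For the figure eight knot and the torus knots both points are explicit from the known recursions and a direct computation of their classical limits, which is how the conjecture is established in this paper and in \cite{Ctoric}; for a general knot this would require the geometric half of the AJ conjecture, or a direct proof that a minimal-order annihilator of $(J^K_n)$ has classical limit equal to the $A$-polynomial.
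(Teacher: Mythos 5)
First, note that the statement you are proving is stated in the paper as a \emph{conjecture}: the paper itself gives no proof for general $K$, and only records that it has been established for the figure eight knot in \cite{LJ1} and for torus knots in \cite{Ctoric}. Your strategy (turn a $q$-difference relation for the colored Jones polynomials into a Toeplitz identity $PZ_k(E_K)=R$, invert $P$ microlocally where its principal symbol is nonzero, and use $\MS(R)\subset r(\mo^{\ab}(E_K))$) is exactly the strategy the authors describe in the introduction and use in \cite{LJ1}; so the approach is the intended one. But as written your argument has a genuine gap, and not only the one you flag in your last paragraph (the AJ-type input needed for a general knot, which by itself already means you have at best reduced one conjecture to another).

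The gap is that ellipticity only gives $\MS(Z_k(E_K))\subset \{\sigma_0(P)=0\}\cup \MS(R)$, and for the recursions that actually exist this set is \emph{strictly larger} than $r(\mo(E_K))$, for two distinct reasons visible in the paper's own figure-eight operator $Q_k$ of (\ref{eq:def_Q}). Its principal symbol is $-4i\sin(4\pi q)\bigl(\cos(2\pi p)-\cos(8\pi q)+\cos(4\pi q)+1\bigr)$: the factor $\sin(4\pi q)$, produced by clearing denominators (the term $M^2-M^{-2}$), vanishes on the four circles $q\in\{0,\tfrac14,\tfrac12,\tfrac34\}$, which are not contained in $r(\mo(E_8))$; so the "verification that no spurious zeros appear" that you postpone actually \emph{fails} in the known cases, and on those circles your elliptic argument says nothing. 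Second, even the geometric factor (the $A$-polynomial restricted to the pillowcase) vanishes at points that are boundary values of $\mathrm{SL}(2,\C)$ but not $\su$ representations: the paper explicitly removes $\pi(\mu/2)$ from $r(\mo^{\ir}(E_8))$ and identifies it (proof of Proposition \ref{prop:reg-slope}) as an imaginary double point coming from $\mathrm{SL}(2,\R)$ representations, yet it lies in the characteristic set $X$ of (\ref{eq:def_X}). At all such points the conjecture demands rapid decay while $P$ is not elliptic and $x\notin\MS(R)$ gives no help, so additional arguments (this is part of the real work in \cite{LJ1}) are required; your proposal does not supply them. A smaller inaccuracy: the control of the inhomogeneous term is not a consequence of the "abelian part of Conjecture \ref{conj:asymptotic_knot_state}" — for the figure eight knot the right-hand side is the explicit state $R_kZ_k^0$ and its microsupport $\{p=0\}$ is computed directly (Theorem 5.4 of \cite{LJ1}); for a general knot the right-hand side $R(t,t^n)$ would need its own analysis.
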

The conjecture has been proved for the eight knot \cite{LJ1} and for the torus knots \cite{Ctoric}. To complete this, we make two conjectures on the asymptotic behavior of the knot state on a neighborhood of $r ( \mo (E _K))$. 

\subsubsection{Irreducible representation} 
Here it is more convenient to consider the knot state as a section over the moduli space $\mo ( \Si)$. Recall first that the smooth part $\mo^s ( \Si )$ is diffeomorphic to $E^s / R \rtimes \Z_2$. The symplectic and the complex structure of $E$  descend to $\mo^s ( \Si)$. The quotient of $L \rightarrow E^s$ is the the restriction of the Chern-Simons bundle $L_{\CS}$. The quotient  of $\delta \rightarrow E^s$ is a line bundle over $\mo^s ( \Si)$ that we still denote by $\delta$. 
Each section $\Psi \in \Hilb_k^{\alt}$ defines a holomorphic section  of $L_{\CS}^k \otimes \delta \rightarrow \mo^s ( \Si)$. We could actually extend $\delta$ and $L_{\CS}$ to orbibundles over $\mo ( \Si)$ in such a way that $\Hilb_k^{\alt}$ gets identified with the space of holomorphic sections of $ L^k_{\CS} \otimes \delta \rightarrow \mo ( \Si)$, but it is not necessary for our purposes.  

The morphism $ \varphi: \delta^2 \rightarrow K_j$ does not descend to the quotient $\mo^s ( \Si)$, because $R \rtimes \Z_2$ acts on the canonical line $K_j$ by $(x,n) .z = (-1)^n z$. But the square $\varphi^2$ descends to an isomorphism between $\delta^4 \rightarrow \mo^s ( \Si)$ and the square of the canonical bundle of $\mo^ s ( \Si)$. We still use $\varphi$ for notational purpose even if it is only defined up to a sign. 

\begin{conj}\label{conj:irreducible}
For any open set $U$ of $\mo ^{s}(\Si)$ such that $V = r^{-1} ( U)$ is connected, contained in $\mo^s(E_K)$  and $r$ restricts to an embedding from $V$ into $U$, we have on $U$
$$Z_k (E_K)   =   e ^{i \frac{m\pi}{4}} \frac{k^{3/4}}{4\pi^{3/4}} F^k f(\cdot ,k ) + O(k^{-\infty})$$
where $m$ is an integer
\begin{itemize} 
\item[-] $F$ is a section of $L \rightarrow U$ such that  $F(r(\rho)) = \CS (\rho)$ for any $\rho \in V$ and which satisfies the Cauchy-Riemann equation up to a term vanishing to infinite order along $r( \mo ^{s} )$. 
\item[-] $f(\cdot , k)$ is a sequence of $\Ci (U,\delta)$ admitting an
  asymptotic expansion of the form $f_0 + k^{-1} f_1 +  \cdots$ for the
  $\Ci$ topology with coefficients $f_i \in \Ci (U, \delta)$. Furthermore $$ \bigl( r^* \varphi( f_0^2) \bigr)( \rho)    =  \pm \T(\rho)$$ for any $\rho \in V$.
\end{itemize}
\end{conj}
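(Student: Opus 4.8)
The plan is to prove the statement for the figure eight knot (the general conjecture being out of reach until the $q$-difference equations of the colored Jones polynomials are better understood), by running the colored-Jones recursion through the semiclassical calculus of Toeplitz operators developed in \cite{l3,l4}.

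\emph{From the recursion to a Toeplitz equation.} I would start from an explicit $q$-difference relation $P(M,L,t)J^K_n(t)=R(t,t^n)$ for the figure eight knot $K$, where $(Mf)_n=t^{2n}f_n$, $(Lf)_n=f_{n+1}$, $P$ is polynomial in $M,L,t$ and $R$ rational. Specializing $t=t_k=-e^{i\pi/2k}$ and using (\ref{eq:etat_noeud}) and (\ref{eq:def_base_hilbk}), the sequence $(J^K_\ell(t_k))$ becomes the family of coefficients of $Z_k(E_K)$ in the basis $(\Psi_\ell)$, while $M$ and $L$ become the Heisenberg elements $(\mu/2k,1)$ and $(-\la/2k,1)$ acting on $\Hilb_k^{\alt}$. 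Thus $P$ becomes a Toeplitz operator $P_k$ and the recursion becomes the operator identity $P_kZ_k(E_K)=R_k$, with $R_k$ a Lagrangian state carried by $r(\mo^{\ab}(E_K))$. From $P(M,L,t)$ I would then compute, by the standard symbolic rules, the principal symbol $p$ and the subprincipal symbol of $P_k$ as functions on $E$; the geometric fact to use is that the characteristic set $\{p=0\}$ contains $r(\mo^{s}(E_K))$ (this amounts to the $A$-polynomial of the figure eight cutting out the image of its $\su$-character variety), and that $dp\neq 0$ along it.

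\emph{Microlocal analysis on $U$.} On an open set $U\subset\mo^{s}(\Si)$ as in the statement with $U\cap r(\mo^{\ab}(E_K))=\emptyset$, Conjecture \ref{conj:microsupport} --- proved for the figure eight in \cite{LJ1} --- both discards $R_k$ (which is then $O(k^{-\infty})$ on $U$) and confines the microsupport of $Z_k(E_K)$ to $r(\mo^{s}(E_K))$. So on $U$ we have $P_kZ_k(E_K)=O(k^{-\infty})$ with $Z_k(E_K)$ already microsupported on the smooth nondegenerate characteristic hypersurface $\{p=0\}\cap U=r(\mo^{s}(E_K))\cap U=V$. The normal-form and propagation results of \cite{l3,l4} then force $Z_k(E_K)$ to be on $U$ a Lagrangian state carried by this hypersurface, whose phase is a section $F$ of $L_{\CS}\to U$ that is flat along it --- hence, by the flatness statement in \S\ref{sec:chern-simons-invar}, agrees with $\CS(\rho)$ up to a constant --- whose Maslov-type index yields the integer $m$, and whose principal symbol $f_0\in\Ci(U,\delta)$ satisfies a linear first-order transport ODE along the Hamiltonian flow of $p$, with coefficients built from $dp$ and the subprincipal symbol. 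At this stage $Z_k(E_K)$ is determined on $U$ up to an $O(k^{-\infty})$ error and one multiplicative scalar for each connected component of $r(\mo^{s}(E_K))$.

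\emph{Torsion as symbol, and the remaining constants --- the main obstacle.} The crux is to identify $f_0$ with a square root of the Reidemeister torsion, i.e. to prove that $r^*\varphi(f_0^2)(\rho)=\pm\T(\rho)$ for $\rho\in V$, equivalently that $\sqrt{\T(E_8,\cdot)}$ solves the transport equation produced by $P_k$. Using the explicit torsion (\ref{eq:torsion_8}) and the explicit symbols of $P_k$, this reduces to a first-order differential equation satisfied by $\T(E_8,\rho)$ along the branches of the character variety --- the phenomenon flagged in the introduction --- which I would check directly on the standard two-parameter model of the figure-eight character variety. Finally, to fix the scalars I would use the symmetries of the knot state (Proposition \ref{sec:symm-knot-state}, the $\la/2$-translation symmetry, and Lemma \ref{sec:symmetry-CS}) to reduce to a single unknown, and pin that one down by inserting the result into the gluing pairing (\ref{eq:pairing}) $Z_k(M)=\langle Z_k(E_K),Z_k(N)\rangle$, estimating it with the half-form pairing formula of \cite{l1}, and matching against Hikami's known Witten asymptotics \cite{hikami2} for the $\pm1$-surgeries, which are the Brieskorn sphere $\Sigma(2,3,7)$. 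The transport equation for the torsion and the determination of this last constant are the delicate points; given \cite{l3,l4}, the Toeplitz and microlocal steps are essentially mechanical.
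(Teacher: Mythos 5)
Your proposal follows essentially the same route as the paper's own proof for the figure eight knot (Theorem \ref{theo:noeud_de_8}): the $q$-difference relation is turned into the Toeplitz equation (\ref{eq:eq_q_difference}), microlocal analysis away from $\{p=0\}$ produces a Lagrangian state with flat (Chern--Simons) phase and a symbol obeying a transport equation, the Reidemeister torsion is verified to solve that transport equation, and the symmetries together with the $\pm 1$-surgery $\Sigma(2,3,7)$ and Hikami's asymptotics fix the remaining multiplicative constant. As in your plan, this last step only pins the constant to leading order, which is why the paper obtains the conjecture up to a scalar sequence $\la_k = 1 + O(k^{-1})$ rather than in its exact form.
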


Observe that for any regular irreducible representation $\rho \in \mo ( E_K)$ such that $r^{-1}( r ( \rho) ) = \{ \rho \}$,   $r( \rho)$ has a neighborhood $U$ satisfying the assumption of the conjecture. Furthermore the conjecture implies that 
\begin{gather} \label{eq:CS_Reid_irr}
 Z_k (E_K) (r ( \rho))  \sim e^{ i \frac{m \pi}{4}}\frac{k^{3/4}}{4\pi^{3/4}} \CS(\rho)^k
\tau ^{\frac{1}{2}} 
\end{gather}
where $\tau \in \delta^2_{r( \rho)}  $ is such that $r^* \varphi ( \tau) =\pm \T ( \rho)$. Hence, the Chern-Simons invariant and the torsion at $\rho$ are determined by the asymptotic behavior of $Z_k (E_K)$. 

\subsubsection{Abelian representations}

Our second conjecture describes the knot state on a neighborhood of any regular abelian representation. Denote by $\Delta_K$ the Alexander polynomial of $K$. We say that a point $x \in \la \R$ is regular if a neighborhood of $\pi(x)$ does not meet $r( \mo ^{\ir})$ and $\Delta_K ( e^{4i \pi q }) \neq 0$ if $ x = q \la$. In the following statement, we consider $Z_k (E_K)$ as an $R \rtimes \Z_2$-invariant section of $L^k \otimes \delta \rightarrow E$. 

\begin{conj}\label{conj:abelian}
Any regular point of $ \R \la$ has an open neighborhood $V$ in $E$ such that $V \cap \R \la$ consists of
regular points and 
$$ Z_k( E_K) (x)   =   e^{im \frac{\pi}{4}} \Bigl( \frac{k}{2 \pi} \Bigr)^{1/4}  t_\la^k(x) \otimes f(x ,k ) \Om_{\la} + O(k^{-\infty}) , \qquad \forall x \in V $$
where $m$ is an integer,  
\begin{itemize} 
\item[-] $t_\la$ is the holomorphic section of $L\rightarrow E$ restricting to $1$ on $\R \la$.
\item[-] $f(\cdot , k)$ is a sequence of $\Ci (V)$ admitting an
  asymptotic expansion of the form $f_0 + k^{-1} f_1 +  \cdots$ for the
  $\Ci$ topology with coefficients $f_i \in \Ci (V)$, the first one satisfying  $$f_0(q\la) =  \frac{1}{\sqrt{2}}  \frac{ \si - \si^{-1}  }{\Delta_K (\si^2)}  \quad \text{ with } \quad \si = e ^{ 2i \pi q }  $$ for any $q \la \in V$.
\item[-] $\Om_{\la}\in \delta$ such that $\Om_\la ^2 ( \la) = 1$.   
\end{itemize}
\end{conj}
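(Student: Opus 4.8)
The plan is to derive Conjecture~\ref{conj:abelian} from a $q$-difference relation for the colored Jones polynomials of $K$, along the lines of the strategy of \cite{LJ1}. Suppose we are given a recursion
\[
  P(M,L,t)\,J^K_\bullet = R(t,t^\bullet),
\]
where $M,L$ act on sequences by $(Mf)_n=t^{2n}f_n$, $(Lf)_n=f_{n+1}$, where $P$ is a polynomial in $M,L,t$ whose $t\to 1$ limit cuts out the non-abelian factor of the $A$-polynomial curve of $K$, and $R$ is a rational function; for the figure eight knot this relation is explicit and for torus knots it is the one used in \cite{Ctoric}. Substituting $t=t_k=-e^{i\pi/2k}$ and using that the coefficients of $Z_k(E_K)$ in the basis $(\Psi_\ell)$ are $\tfrac{\sin(\pi/k)}{\sqrt k}J^K_\ell(t_k)$, the first step is to show that the substituted right-hand side $R_k=\tfrac{\sin(\pi/k)}{\sqrt k}\sum_\ell R(t_k,t_k^\ell)\Psi_\ell$ is, microlocally near $\R\la\subset E$, a Lagrangian state carried by the reducible branch $\R\la$ (the preimage of $r(\mo^{\ab}(E_K))$), with flat phase $t_\la^k$ and a leading symbol that one computes directly from $R$; because of the Melvin--Morton--Rozansky type degeneration of $J^K_\ell$ along the abelian locus (equivalently, because of Milnor's formula \eqref{eq:torsion_abelian}), this symbol is proportional to $(\si-\si^{-1})/\Delta_K(\si^2)$ with $\si=e^{2i\pi q}$.

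Next, by \eqref{eq:def_base_hilbk} the commuting Heisenberg operators $M,L$ are Toeplitz operators on $\Hilb_k^{\alt}$, so the recursion becomes a Toeplitz equation
\[
  \hat P\,Z_k(E_K) = R_k + O(k^{-\infty}),
\]
where $\hat P$ has a principal symbol $\sigma_0$ and a subprincipal symbol $\sigma_1$ read off explicitly from $P(M,L,t)$ as functions on $\mo(\Si)$. By construction $\sigma_0$ vanishes on $r(\mo^{\ir}(E_K))$, and, viewed on $E$, it vanishes along $\R\la$ only at the bifurcation points of the character variety, which lie over roots of the Alexander polynomial. Hence at a \emph{regular} point $x=q\la$ of $\R\la$ — one away from $r(\mo^{\ir}(E_K))$ and with $\Delta_K(e^{4i\pi q})\ne 0$ — the symbol $\sigma_0$ is non-vanishing on a neighbourhood $V$, so $\hat P$ is elliptic there and admits a Toeplitz parametrix $\hat Q$ with $\hat Q\hat P=\id+O(k^{-\infty})$ microlocally over $V$. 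Applying $\hat Q$ gives $Z_k(E_K)=\hat Q R_k + O(k^{-\infty})$ on $V$.

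It then remains to push $R_k$ through $\hat Q$ using the symbol calculus of \cite{l1} for a Toeplitz operator acting on a Lagrangian state: $\hat Q R_k$ is again a Lagrangian state carried by $\R\la$, with the same flat phase and with leading symbol equal to the leading symbol of $R_k$ divided by $\sigma_0|_{\R\la}$, the subleading terms being obtained by inverting $\hat P$ order by order in $1/k$, which yields the full expansion $f_0+k^{-1}f_1+\cdots$. The phase is a flat section of $L\to V$ along $\R\la$, hence a multiple of $t_\la$; since any flat connection valued in an abelian subalgebra has $\tr(\al\wedge\al\wedge\al)=0$, the Chern--Simons invariant of the abelian representation $\rho_q$ is trivial and the phase is exactly $t_\la^k$. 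Finally one extracts $f_0$: it is the quotient of the leading symbol of $R_k$ (which carries the factor $\tfrac{\sin(\pi/k)}{\sqrt k}$) by $\sigma_0|_{\R\la}$, and matching the resulting density $|f_0\,\Om_\la|^2$ against the torsion formula \eqref{eq:torsion_abelian} — recall $|\si-\si^{-1}|^2=4\sin^2(2\pi q)$ and $\si^2=e^{4i\pi q}$ — fixes the constant $\tfrac1{\sqrt2}$, the normalisation $\Om_\la^2(\la)=1$, the power $\bigl(\tfrac{k}{2\pi}\bigr)^{1/4}$, and the Maslov-type integer $m$.

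I expect the main obstacle, once the recursion is available, to be the exact determination of the leading symbol of $R_k$ along the reducible branch together with the careful bookkeeping of all normalisation constants ($\tfrac1{\sqrt2}$, the half-form $\Om_\la$, the power of $k$, the integer $m$): this amounts to a Melvin--Morton--Rozansky type computation combined with tracking the density and half-form conventions entering the definition of $\T(E_K,\rho_q)$ in \eqref{eq:torsion_abelian}. For a general knot the prior obstacle — producing a $q$-difference relation with the correct classical limit — is precisely why the statement is, for now, only a conjecture rather than a theorem established for every $K$.
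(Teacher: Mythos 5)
This statement is one of the paper's \emph{conjectures}: the present article gives no proof of it for a general knot (it is established only for the figure-eight knot and for torus knots, in \cite{LJ1} and \cite{Ctoric}, by essentially the route you describe). Your proposal therefore reproduces the authors' announced strategy — feed a $q$-difference relation for the colored Jones polynomials into the Toeplitz machinery, invert the operator where its symbol is elliptic, and read off the symbol of the resulting Lagrangian state — but as written it is a plan with the three decisive inputs assumed rather than proved, so it does not close the statement. (i) The existence of a recursion $P(M,L,t)J^K_\bullet=R$ whose classical limit cuts out the non-abelian part of the $A$-polynomial is exactly the open AJ-type hypothesis; your whole argument is conditional on it, as you acknowledge. (ii) The identification of the leading symbol of the inhomogeneous term along $\R\la$ with $(\si-\si^{-1})/\Delta_K(\si^2)$ is asserted by appeal to an ``MMR-type degeneration'' but never derived; this is the analytic heart of the abelian conjecture, and in the known cases it is a genuinely nontrivial explicit computation, not a formal consequence of having some recursion.

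(iii) The ellipticity claim is concretely wrong as stated. You assert that the principal symbol vanishes along $\R\la$ only over roots of the Alexander polynomial, but already for the figure-eight operator $Q_k$ the principal symbol is $-4i\sin(4\pi q)\bigl(\cos(2\pi p)-\cos(8\pi q)+\cos(4\pi q)+1\bigr)$, whose factor $\sin(4\pi q)$ vanishes on the abelian branch at $q=0$ and $q=1/2$; these are regular points in the sense of the conjecture (no irreducible characters nearby and $\Delta_K(1)\neq0$), yet your parametrix argument fails on any neighborhood of them, so a separate treatment (for instance dividing a common factor out of both sides, or a direct analysis near the central points as in \cite{LJ1}) is unavoidable. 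Finally, the last step is circular: you propose to ``fix'' the constant $1/\sqrt2$, the power $(k/2\pi)^{1/4}$ and the integer $m$ by matching $|f_0\,\Om_\la|^2$ against the torsion formula \eqref{eq:torsion_abelian}, but that formula is precisely what the conjectured symbol encodes, and in any case a density comparison cannot determine the phase factor $e^{im\pi/4}$; these normalizations have to come out of the Toeplitz/stationary-phase computation itself (the normalization of the basis $(\Psi_\ell)$, of $Z^0_k$ and of the half-form pairing), which your outline does not carry out.
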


Let $\rho \in \mo (E_K )  $ be an abelian representation so that $r(\rho) =\pi ( x)$ for some $x \in \R \la$. Then $\CS ( \rho) = t_\la ( x)$. Assume furthermore that $r( \rho)$ is regular  and non-central, then the Reidemeister torsion $\T( \rho)$ is given in terms of the Alexander polynomial by (\ref{eq:torsion_abelian}). Conjecture \ref{conj:abelian} implies that 
\begin{gather} \label{eq:CS_Reid_ab}
 Z_k (E_K) (r ( \rho))  \sim   e^{ i \frac{m \pi}{4}}  \frac{k^{1/4}}{2^{3/2}\pi^{3/4}}\CS(\rho )^k \tau^{1/2} ,
\end{gather}
where $\tau \in \delta^2_{r( \rho)}  $ is such that $r^* \varphi ( \tau) =\pm \T ( \rho)$.

\subsection{The Witten Conjecture for Dehn fillings}

In this part we show that under some mild assumptions, the previous conjectures imply the Witten conjecture about the asymptotic expansion of the WRT invariants of the Dehn fillings of a knot exterior. Consider a knot $K$ in $S^3$ and two relatively prime integers $p,q$.  The Dehn filling of $K$ with parameters $(p,q)$ is 
$$ M = E_K \cup_{\phi} (-N) $$ 
where $N$ is the solid torus $D^2 \times S^1$ and $\phi$ an oriented diffeomorphism $\partial N \rightarrow \Si$ sending the homology class of $\partial D^2$ into $p \mu + q \la$. Here $\mu$ and $\la$ are the homology classes of a meridian and longitude of $K$. 

Consider the segment
$$I_{p/q}  = \pi( (p\mu + q \la) \R ) \subset \mo( \Si) .$$ 
One has to assume that $I_{p/q}$ intersects transversally $r(\mo( E_K))$ at points where we can describe the state $ Z_k (E_K)$ with the previous conjectures. The precise hypothesis are the following.
\begin{enumerate} 
\item[H1-] $p \neq 0$ and $I_{p/q} \cap I_{0}$ consists of regular abelian points.
\item[H2-] $Z= I_{p/q} \cap r( \mo ^{\ir} ( E_K))$ is finite and for any of $x \in Z$, $r^{-1}(x) $ consists of a single regular irreducible representation $\rho$ which satisfies for any generator $\xi\in H^1_\rho(E_K)$:
$$\langle r^*\xi,p\mu+q\lambda\rangle \ne 0$$
\end{enumerate}
The fundamental group of $M$ is the quotient of the fundamental group of $E_K$ by the subgroup generated by the homotopy class of $p \mu + q \la$. So there is a one to one correspondence between $\mo (M)$ and the set of $\rho \in \mo(E_K)$ such that $r( \rho) \in I_{p/q}$. In particular, under the previous hypothesis, $\mo(M)$ is finite.  

The hypothesis (H1) and (H2) can be completely transcribed in terms of the moduli space $\mo(M)$, as it is explained in the following lemma.
\begin{lem}
Let $K$ be a knot in $S^3$ and $p,q$ be two relatively prime integers. Let $M$ be the Dehn filling of the exterior of $K$ with parameters $(p,q)$ and $L$ be the core of the solid torus glued to the exterior of $K$ in order to obtain $M$. Then, the hypothesis (H1) and (H2) are satisfied if and only if 
\begin{enumerate}
\item[H1'-] The restriction map $\mo(M)\to \mo(L)$ is injective.
\item[H2'-] For any non central representation $\rho\in \mo(M)$, one has $H^1_\rho(M)=0$. 
\end{enumerate}
\end{lem}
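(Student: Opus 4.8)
The plan is to translate each side through the Mayer--Vietoris sequence of the gluing $M = E_K \cup_\Si (-N)$, using the identification $\mo(M)\simeq r^{-1}(I_{p/q})\subset\mo(E_K)$ recalled above.

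I would first handle (H1'). The core $L$ is isotopic in $N$ to a simple closed curve $c\subset\partial N$ completing $\partial D^2$ to a basis of $H_1(\partial N;\Z)$; writing $\phi_*[c]=r\mu+s\la$ with $ps-qr=\pm1$, and since $\phi_*[\partial D^2]=p\mu+q\la$ is killed in $\pi_1(M)$, the restriction $\mo(M)\to\mo(L)$ sends $\rho$ to the conjugacy class of $\rho(r\mu+s\la)$. Using the explicit map $\pi$ and the formula $\pi(x)(\gamma)=\exp((\gamma\cdot x)D)$ one checks that $\pi(t(p\mu+q\la))\mapsto[\exp(\pm tD)]$, so $I_{p/q}\to\mo(L)$ is a homeomorphism; hence (H1') is equivalent to the injectivity of $r$ on $\mo(M)=r^{-1}(I_{p/q})$. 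Now $r$ is injective on $\mo^{\ab}(E_K)$ with image $I_0$, and an irreducible representation of $E_K$ has non-central restriction (as $\mu$ normally generates $\pi_1(E_K)$, so $\rho_1(\mu)=\pm I$ forces $\rho_1$ central, whence no irreducible restricts to a central character). It follows that this injectivity is equivalent exactly to the ``single preimage'' clause of (H2): each $x\in Z=I_{p/q}\cap r(\mo^{\ir}(E_K))$ has a unique preimage in $\mo(E_K)$, which is then its sole, irreducible one (a central point of $I_{p/q}$ — $\pi(0)$, and $\pi(\tfrac12\la)$ when $p$ is even — lies in $I_0$ and carries only its central preimage, causing no trouble).

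The core of the argument is the following: for non-central $\rho\in\mo(M)$, writing $\rho_1=\rho|_{E_K}$, $\rho_2=\rho|_N$, $\rho_\Si=r(\rho_1)$, one has $H^1_\rho(M)=0$ if and only if $\rho_1$ is a regular point of $\mo(E_K)$ whose tangent line $r_1^*(H^1_{\rho_1}(E_K))\subset H^1_{\rho_\Si}(\Si)$ is transverse to $\R(p\mu+q\la)$. One first checks that $\rho_\Si$ and $\rho_2$ are non-central: if $\rho_\Si$ were central then $\rho_1(\mu)=\pm I$ and $\rho$ is central; if $\rho_2$ were central, i.e.\ $\rho_\Si(r\mu+s\la)=\pm I$, then (as $c$ completes $p\mu+q\la$ to a basis) $r(\rho_1)\in\pi(\tfrac12 R)$ is again central. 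Consequently all the $H^0$ terms are at most one-dimensional, the map $H^0_{\rho_1}(E_K)\oplus H^0_{\rho_2}(N)\to H^0_{\rho_\Si}(\Si)$ is onto, and Mayer--Vietoris reduces to
$$0\longrightarrow H^1_\rho(M)\longrightarrow H^1_{\rho_1}(E_K)\oplus H^1_{\rho_2}(N)\xrightarrow{\;(r_1^*,\,-r_2^*)\;}H^1_{\rho_\Si}(\Si)\simeq H_1(\Si;\R),$$
where $r_2^*$ is injective with image the line $\R(p\mu+q\la)$ (dual to ``the meridian of $N$'' being $p\mu+q\la$) and, by the ``half lives, half dies'' principle, $r_1^*$ has one-dimensional image. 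Hence the kernel vanishes iff $H^1_{\rho_1}(E_K)$ is one-dimensional (so $r_1^*$ is injective, i.e.\ $\rho_1$ is regular) \emph{and} that image is not $\R(p\mu+q\la)$. For $\rho_1$ abelian this first condition is the Alexander-polynomial condition defining a regular abelian representation, and the second, since the image is $\R\la$, reads $p\ne0$; for $\rho_1$ irreducible the first is regularity in the sense of the text and the second is exactly $\langle r^*\xi,p\mu+q\la\rangle\ne0$ for a generator $\xi$ of $H^1_{\rho_1}(E_K)$.

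It remains to quantify over $\mo(M)$. Under the identification, its non-central elements are the abelian representations with $r(\rho_1)\in I_{p/q}\cap I_0$ and the irreducible ones with $r(\rho_1)\in Z$, so the criterion above turns (H2') into: $p\ne0$ and every abelian representation of $M$ is regular abelian — that is, (H1) — together with every irreducible representation of $M$ being regular and satisfying $\langle r^*\xi,p\mu+q\la\rangle\ne0$ — the ``regularity'' part of (H2). That $p=0$ contradicts (H2') is seen directly, since then $\mo(M)\supseteq\mo^{\ab}(E_K)$ carries a positive-dimensional family of non-central abelian characters; and central representations of $M$ need no attention, being unobstructed once $p\ne0$ (then $H^1_\rho(M)=H^1(M;\R)^{\oplus3}=0$). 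Finally, the finiteness of $Z$ is recovered from (H2') together with (H1') and the fact that a compact character variety whose points are all infinitesimally isolated is finite, while conversely the finiteness and ``single preimage'' clauses of (H2), with the first paragraph, give (H1'). The step I expect to cost the most care is the Mayer--Vietoris bookkeeping at degenerate and central representations, together with the sharp form of ``half lives, half dies'' — that $r_1^*(H^1_{\rho_1}(E_K))$ is not merely isotropic but a full Lagrangian line — since that is what promotes ``$r_1^*$ injective'' to ``$\dim H^1_{\rho_1}(E_K)=1$'' and makes the transversality criteria exact.
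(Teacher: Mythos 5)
Your proof takes essentially the same route as the paper's: identify $\mo(L)\simeq\mo(N)\simeq I_{p/q}$ so that (H1') becomes injectivity of $r$ on $r^{-1}(I_{p/q})$, i.e.\ the single-preimage clause, and then use the Mayer--Vietoris sequence of $M=E_K\cup_\Si(-N)$, with $H^1$ read as tangent spaces, to identify the vanishing of $H^1_\rho(M)$ at non-central $\rho$ with regularity of the restriction to $E_K$ plus transversality to $\R(p\mu+q\la)$. It is correct, and in fact more detailed than the paper's own argument (the explicit ``half lives, half dies'' input, the surjectivity on $H^0$ killing the connecting map, the $p=0$ and central cases, and the finiteness of $Z$ are all left implicit in the paper).
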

\begin{proof}
Write as above $M=E_K\cup_{\phi} (-N)$. The knot $L$ is the core of $N$ hence the moduli space $\mo(L)$ is identified with $\mo(N)$ which is itself identified to $I_{p/q}$ by the restriction mapping $\mo(N)\to\mo(\Sigma)$. 
The representation space $\mo(M)$ is in bijection with the set of representations $\rho\in\mo(E_K)$ such that $r(\rho)\in I_{p/q}$. The assumption that $r^{-1}(r(\rho))=\{\rho\}$ for all such representations is equivalent to the hypothesis H1'. 

Let $\rho$ be such a representation in $\mo(E_K)$. Abusing notation, we also denote by $\rho$ the corresponding representations in $\mo(M),\mo(N)$ and $\mo(\Sigma)$. The Mayer-Vietoris sequence of the gluing $M=E_K\cup_{\phi} (-N)$ gives: 

$$\xymatrix{&H^1_{\rho}(M)\ar[r]& H^1_{\rho}(E_K)\oplus H^1_{\rho}(N)\ar[r]& H^1_{\rho}(\Sigma)\\
0\ar[r]&H^0_{\rho}(M)\ar[r]& H^0_{\rho}(E_K)\oplus H^0_{\rho}(N)\ar[r]& H^0_{\rho}(\Sigma)\ar[ull]^{\partial}}$$
If the hypothesis $H1$ and $H2$ are satisfied, then the exactness of the bottom line shows that $\partial=0$. The geometric interpretation of $ H^1_{\rho}(E_K),H^1_{\rho}(N)$ and $H^1_{\rho}(\Sigma)$ as the tangent spaces of $\mo(E_K),I_{p/q}$ and $\mo(\Sigma)$ respectively shows that $H^1_\rho(M)$ is identified with the intersection of the tangent spaces of $r(\mo(E_K))$ and $I_{p/q}$ at $\rho\in\mo(\Sigma)$. The transversality assumption implies that this latter space vanishes. 
Reciprocally, if we suppose that $H^1_{\rho}(M)=0$ for all non central $\rho$, we get that $H^1_{\rho}(E_K)$ has dimension 1, showing that $\rho$ represents a regular element of $\mo(E_K)$ and the geometric interpretation holds again, implying the transversality assumption.
\end{proof}
\begin{theo} \label{theo:witt-conj}
Let $K$ be a knot satisfying Conjectures \ref{conj:microsupport}, \ref{conj:irreducible} and \ref{conj:abelian}. Let $p$ and $q$ be two relatively prime integers and $M$ be the Dehn filling of $K$ with parameters $p,q$. Assume that $(H1)$ and $(H2)$ are verified. Then 
\begin{xalignat*}{2} 
 Z_k (M) &  = \sum_{\rho \in \mo (M)}  e^{i \frac{m (\rho) \pi }{4}} k^{n(\rho)} \la_k(\rho) \CS ( \rho )^k  + O(k^{-\infty}) 
\end{xalignat*}
where for any $\rho \in \mo (M)$,  $m( \rho)$ is an integer, $n( \rho)  = 0$, $-1/2$ or $-3/2$ according to whether $\rho$ is irreducible, abelian non-central or central.  Furthermore $( \la_k ( \rho))$ is a sequence of complex numbers admitting an asymptotic expansion of the form 
$$ \la_k ( \rho) =   a_0 ( \rho) + a_1 ( \rho ) k^{-1} + a_2 ( \rho) k^{-2} + \ldots $$
with coefficient  $a_\ell ( \rho )  \in \C$, the leading one being given by  
$$ a_0 ( \rho) = \begin{cases} 2^{-1} \bigl( \T ( \rho) \bigr)^{1/2}  \text{ if $\rho$ is irreducible} \\ 
 2^{-1/2} \bigl( \T ( \rho) \bigr)^{1/2} \text{ if $\rho$ is abelian non-central} \\ 2^{1/2} \pi / p ^{3/2}    \text{ if $\rho$ is central.}
\end{cases} $$
\end{theo}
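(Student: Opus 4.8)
The plan is to evaluate the pairing $Z_k(M)=\langle Z_k(E_K),Z_k(N)\rangle$ of Equation (\ref{eq:pairing}) by a stationary phase/Lagrangian pairing argument. By Proposition 3.3 of \cite{LJ1}, $(Z_k(N))$ is a Lagrangian state supported by $I_{p/q}=r(\mo(N))$, with phase the Chern--Simons section of $r^*L_{\CS}\to\mo(N)$ and symbol a square root of the Reidemeister torsion of the solid torus. Combining this with Conjecture \ref{conj:microsupport}, which makes $Z_k(E_K)$ an $O(k^{-\infty})$ outside $r(\mo(E_K))$, the integrand $\langle Z_k(E_K)(x),Z_k(N)(x)\rangle_\delta$ is $O(k^{-\infty})$ away from the finite set $I_{p/q}\cap r(\mo(E_K))$. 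Using the Dehn filling description of $\pi_1(M)$ as a quotient of $\pi_1(E_K)$, hypothesis (H1) (equivalently (H1$'$): $\mo(M)\to\mo(L)$ injective) puts this intersection set in bijection with $\mo(M)$, each $\rho\in\mo(M)$ restricting to a triple $(\rho_1,\rho_2,\rho_0)$ on $(E_K,N,\Si)$; hypothesis (H2) (equivalently $H^1_{\rho}(M)=0$ for noncentral $\rho$) says that $r(\mo(E_K))$ and $I_{p/q}$ meet transversally at the corresponding point of $\mo(\Si)$ — for the abelian noncentral points this transversality is exactly the condition $p\neq0$, and for the irreducible points it is exactly the nondegeneracy $\langle r^*\xi,p\mu+q\la\rangle\neq0$, which is the symplectic pairing of the two tangent lines. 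Cutting with a partition of unity subordinate to small neighborhoods of these points reduces $Z_k(M)$ to a finite sum of local contributions.

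Next I would treat the noncentral representations. If $\rho\in\mo(M)$ is regular irreducible (resp. abelian noncentral), then on a neighborhood of $r(\rho)$ Conjecture \ref{conj:irreducible} (resp. Conjecture \ref{conj:abelian}) presents $Z_k(E_K)$ as an explicit Lagrangian state, and one applies the asymptotic pairing formula for two Lagrangian states with transversal supports from \cite{l1}. Its leading term is the product of a power of $k$, a Maslov-type unit $e^{im(\rho)\pi/4}$, and the pointwise pairings at $r(\rho)$ of the phases and of the symbols. The phase pairing is $\langle\CS(\rho_1),\CS(\rho_2)\rangle^k=\CS(\rho)^k$ by the Chern--Simons gluing formula (\ref{eq:gluing_Chern_Simons}) for $M=E_K\cup(-N)$. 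The symbol pairing is a Hermitian pairing in the half-form bundle $\delta$ of $\sqrt{\T(\rho_1)}$ with $\sqrt{\T(\rho_2)}$; since $\delta^{\otimes 2}\simeq (T^{1,0}\mo(\Si))^*$ and the scalar product (\ref{eq:scalar_product}) carries the Liouville density $|\om|$, this pairing computes exactly the Mayer--Vietoris combination of $\T(E_K,\rho_1)$, $\T(N,\rho_2)$ and the symplectic density $\T(\Si,\rho_0)$, which by Proposition \ref{prop:gluing-formula-Rei} equals $\T(M,\rho)$; hence the symbol pairing is $\sqrt{\T(\rho)}$. Reading the powers of $k$ and the numerical prefactors off Conjectures \ref{conj:irreducible} and \ref{conj:abelian} (together with the normalization $2\pi\om$ of the Poincar\'e form) yields $n(\rho)=0$, $a_0(\rho)=\tfrac12\sqrt{\T(\rho)}$ in the irreducible case, and $n(\rho)=-\tfrac12$, $a_0(\rho)=2^{-1/2}\sqrt{\T(\rho)}$ in the abelian noncentral case; the remaining coefficients $a_\ell(\rho)$ come from the higher-order terms in the stationary phase expansion and in the $\Ci$-expansions of the symbols.

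The central representations of $M$ sit over the orbifold singular points of $\mo(\Si)$ — the images in $E/(R\rtimes\Z_2)$ of the fixed points $\tfrac12 R$ — so the smooth pairing theory of \cite{l1} does not apply there and a separate analysis is needed. Here I would work upstairs on $E$, where both $Z_k(E_K)$ and $Z_k(N)$ are $R\rtimes\Z_2$-anti-invariant and vanish at the fixed point; the solid torus state $Z_k(N)$ is essentially explicit (a Gauss-sum/theta expression whose dependence on the framing $(p,q)$ is combinatorial), and near the relevant fixed point $Z_k(E_K)$ is controlled either from Conjecture \ref{conj:abelian} on the adjacent abelian branch or directly. Expanding the product near the orbifold point and evaluating the resulting Gaussian lattice sum produces a contribution of order $k^{-3/2}$ with leading coefficient $2^{1/2}\pi/p^{3/2}$ and phase $\CS(\rho)^k$. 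Summing the noncentral and central contributions then gives the stated formula.

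I expect the central-representation analysis to be the main obstacle: unlike the other two cases it is not a direct application of the microlocal pairing machinery, it requires the explicit form of the solid-torus state and a delicate stationary-phase/lattice-sum computation near an orbifold fixed point, and it is precisely what produces the $p^{-3/2}$ dependence and the lower order $k^{-3/2}$. A secondary difficulty is the careful bookkeeping of normalization constants and signs needed to match the analytic half-form pairing of \cite{l1} with the topological Mayer--Vietoris normalization of Proposition \ref{prop:gluing-formula-Rei}, in particular reconciling the distinct numerical prefactors $k^{3/4}/4\pi^{3/4}$ and $(k/2\pi)^{1/4}$ appearing in Conjectures \ref{conj:irreducible} and \ref{conj:abelian} with the common torsion output $\sqrt{\T(\rho)}$.
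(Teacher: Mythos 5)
Your proposal follows essentially the same route as the paper's proof: localize the pairing $\langle Z_k(E_K),Z_k(N)\rangle$ via the microsupports, apply the Lagrangian pairing formula (Proposition 6.1 of \cite{LJ1}) at the transversal intersection points given by (H1)--(H2), identify the phase via the Chern--Simons gluing formula and the symbol pairing via Proposition \ref{prop:gluing-formula-Rei}, and treat the central representations separately since the solid-torus symbol vanishes there. The paper handles that last case exactly as you anticipate, by the same explicit subleading computation already carried out for lens spaces in the proof of Theorem 6.2 of \cite{LJ1}, so no genuinely new analysis is needed beyond what you describe.
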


Because of the anomalies, the WRT invariant is well-defined only up to a power of $\tau_k = e^{ \frac{3 i \pi }{4} - \frac{3 i \pi }{2k}}$. Since $ \tau_k = e^{\frac{3 i \pi }{4}} + O(k^{-1})$, this factor appears in the leading order term in the power $m( \rho)$. 

For the abelian representation, the Chern-Simons invariant is easily computed exactly as for the lens spaces (cf. \cite{LJ1}, proof of Theorem 6.2)
 $$ \CS ( \rho_\ell) = e^{ 2 i \pi \ell^2 q/p} , \qquad \ell = 0 , 1, \ldots, p-1 $$
where $\rho_\ell \in \mo ^{\ab} (E_K)$ is such that $r ( \rho_\ell) = \pi ( \ell \frac{q}{p} \la)$. Similarly if $\ell \neq 0$ and $p/2$, then
$$ \T ( \rho_\ell) = \frac{2}{ \sqrt p }\frac{ \sin ( 2 \pi \ell /p ) \sin ( 2 \pi q \ell /p ) }{\Delta_K ( e^{4 i \pi q \ell /p})}$$
where $\Delta_K$ is the Alexander polynomial of the knot. Furthermore one has for some integer $m$ that: $m ( \rho_\ell) = m -2 $ if $\rho_\ell$ is central and $m ( \rho_\ell) = m$ otherwise.

\begin{proof}
We use in this article a family of topological quantum field theories denoted by $(V_k,Z_k)$ which associate to any surface $\Sigma$ a complex hermitian vector space $V_k(\Sigma)$ and to any 3-manifold $N$ bounding $\Sigma$ a vector $Z_k(M)\in V_k(\Sigma)$. The key property that we shall use in the proof is the gluing formula stating that for any two 3-manifolds bounding $\Sigma$ denoted by $N_1,N_2$, one has $Z_k(N_1\cup(-N_2))=\langle Z_k(N_1),Z_k(N_2)\rangle$.
In \cite{LJ1}, Theorem 2.4, we identified the hermitian space $V_k(\Sigma)$ with the space $\Hilb_k^{\alt}$ in a natural way, that is compatible with the (projective) actions of SL$_2(\Z)$ on both spaces. The knot states $Z_k(E_K)$ and $Z_k(N)$ in $\Hilb_k^{\alt}$ are compatible with these identifications, hence we have $$Z_k (M) = \langle Z_k ( E_K), Z_k (N) \rangle $$

To estimate this scalar product, we consider $Z_k (E_K)$ and $Z_k(N)$ as sections over the torus $T = E/R$. 

By Theorem 3.3 in \cite{LJ1}, the microsupport of $Z_k (N)$ is contained in $I_{p/q}$. By Conjecture \ref{conj:microsupport}, the microsupport of $Z_k (E_K)$ is contained in $r( \mo ( E_K))$. Hence for any compact neighborhood $C$ of $r ( \mo ( E_K)) \cap I_{p/q}$ in $\mo ( \Si)$, we have 
\begin{gather}   \label{eq:integrale} 
  \langle Z_k ( E_K), Z_k (N) \rangle =  \int_{\tilde{C}} \bigl( Z_k (E_K)   , Z_k (N) \bigr)_{L^k \otimes \delta}  (x) \;  \mu_T ( x)  +O(k^{-\infty}) 
\end{gather}
where $( \cdot, \cdot  )_{L^k \otimes \delta}$ denote the pointwise hermitian product of $L^k \otimes \delta \rightarrow T$, $\mu_T$ is the Liouville measure of $T$ and $\tilde C$ is the preimage of $C$ by the projection $T \rightarrow \mo ( \Si)$. 
 
Let $\ga = p \mu + q \la$. We proved in Theorem 3.3 of \cite{LJ1} that $Z_k(N)$ is a Lagrangian state supported by the circle $ C_{p/q} = \{ t\ga, \; t \in \R \} \subset T$. More precisely, for any real $t$ and $x = [t \ga] \in C_{p/q}$,       
\begin{gather}\label{eq:tore_solide_1}
 Z_k ( N) ( x) = e^{i \frac{\pi}{4} m} \Bigl( \frac{2 k}{ \pi }\Bigr)^{1/4}   \sin ( 2 \pi t) \Om_{ \ga} \otimes s_{\ga } ( x ) + O(k^{-3/4}) 
\end{gather}  
where $m$ is an integer, $\Om_{\ga} \in \delta$ is such that $ \Om^2 _{\ga } (\ga) =1$ and $s_{\ga}$ is the flat section of $L \rightarrow C_{p/q}$ lifting to the constant section of $L \rightarrow \ga \R$ equal to $1$. Now consider the representation $\rho \in \mo (N)$ whose restriction to $\Si$ is $\pi (x)$. Assume that $\rho$ is not central, then equation (\ref{eq:tore_solide_1}) may be rewritten as
\begin{gather} \label{eq:tore_solide_2}
 Z_k ( N) ( x) = e^{i \frac{\pi}{4} m} \frac{k^{1/4}}{2^{3/2}\pi^{3/4}}    \tau \otimes \CS ^k( \rho ) + O(k^{-3/4})
\end{gather}
where $\tau \in \delta_{r_N( \rho)} $ is such that $r^*_N \tau^2 = \T ( \rho)$. Here $r_N$ is the restriction map $ \mo ( N) \rightarrow \mo ( \Si)$. 

By our assumptions, on a neighborhood of each point of $r ( \mo ( E_K)) \cap I_{p/q}$, $Z_k(N)$ and $Z_k(E_K)$ are Lagrangian states supported by two transversal curves. So we can estimate the integral (\ref{eq:integrale}) by a pairing formula, cf. Proposition 6.1 of \cite{LJ1}.  This leads to 
$$ Z_k (M) = \sum _{\rho \in \mo (M)} \CS ^k (\rho) k^{n_\rho} (a_{\rho, 0} + a_{\rho,1} k^{-1} + \ldots ) + O(k^{-\infty}) $$  
Here we used the fact that the restriction map $ \mo ( M) \rightarrow \mo ( \Si)$ gives a bijection between $\mo (M)$ and $ r ( \mo ( E_K)) \cap I_{p/q}$, the abelian representations being sent in $
r ( \mo ^{\ab}( E_K)) \cap I_{p/q}$ and the irreducible ones in $r ( \mo ^{\irr}( E_K)) \cap I_{p/q}$. The Chern-Simons invariant $\CS( \rho)$ comes from the gluing formula (\ref{eq:gluing_Chern_Simons}). The power $n_\rho$ is equal to  $0$ or $-1/2$ according to $\rho$ is abelian or irreducible. 

The leading coefficients $a_{\rho,0}$ are computed as follows.  
If $\rho$ is abelian and non-central, then by (\ref{eq:CS_Reid_ab}) and (\ref{eq:tore_solide_2}), 
\begin{xalignat*}{2}
a_{\rho, 0 } = & \pm 2 \frac{2 \pi }{(2^{3/2} \pi^{3/4})^2}  \Biggl( \frac{\T ( r_{E_K}(\rho)) \T(r_N(\rho))}{\om }\Biggr)^{1/2} \\
= & \pm 2^{ - 1 /2} \bigl( \T (\rho) \bigr)^{1/2}  
\end{xalignat*}
The factor $2$ comes from the fact that the projection $E^s/R \rightarrow \mo ^s ( \Si)$ is two to one. The maps $r_{E_{K}}$, $r_{N}$ and $r_{\Si}$ are the restriction map from $\mo (M)$ to $\mo (E_K)$, $\mo ( N)$ and $\mo ( \Si)$ respectively. We used that $\om = (2\pi)^{-1}\T(r_{\Sigma} ( \rho)) $ and Proposition \ref{prop:gluing-formula-Rei} to obtain the last equality. 

In the case when $\rho$ is an irreducible representation, we deduce similarly from (\ref{eq:CS_Reid_irr}) and (\ref{eq:tore_solide_2}) that 
\begin{xalignat*}{2} 
a_{\rho, 0 } = &  \pm 2 \frac{2\pi}{(4\pi^{3/4})(2^{3/2}\pi^{3/4})}  \Biggl( \frac{\T ( r_{E_K}(\rho)) \T(r_N(\rho))}{\om }\Biggr)^{1/2} \\
= & \pm 2^{ - 1} \bigl( \T (\rho) \bigr)^{1/2}  .
\end{xalignat*}
Finally if $\rho$ is central, $a_{0,\rho}=0 $ because the sinus in (\ref{eq:tore_solide_1}) vanish. One can compute the second coefficient in the asymptotic expansion exactly as we did for the lens spaces, cf. proof of Theorem 6.2 of \cite{LJ1}. 

\end{proof}

\subsection{Some generalization} 

Consider a knot $K$ in $S^3$ . Denote by $E_K$ its exterior and by $\Si$ its peripheral torus. Let $N$ be the solid torus $D^2 \times S^1$ and $L$ be the banded link $[0, 1/2] \times S^1 \subset N$. Let $\phi$ be a diffeomorphism from $S^1 \times S^1$ to $\Si$ preserving orientations. Consider the Witten-Reshetikhin-Turaev invariant 
$$ Z_{k, \ell} := Z_k ( E_K \cup_{\phi}(-N), L, \ell, 0 ).$$ 
where the color $\ell$ is any integer satisfying $0<\ell<k$. 

Let $( \mu', \la')$ be the basis of $H_1 ( \Si)$ given by $\mu'= \phi ( S^1 \times \{ 1 \} )$ and $\la' = \phi ( \{ 1 \} \times S^1)$. For any real $\dot q \in (0,\frac 1 2)$, consider the circle
$$ C_{\dot q} = \pi ( \dot q \la' + \R \mu ' ) \subset \mo ( \Si) .$$
The following assumptions are similar to the ones of the previous part
 \begin{enumerate} 
\item[I1-] $\mu' \neq \la$ and $X_{\dot q} ^{\ab} = C_{\dot q}  \cap I_{0}$ consists of regular abelian points.
\item[I2-] $X_{\dot q } ^{\ir} = C_{\dot q}  \cap r( \mo ^{\ir} ( E_K))$ is finite and for any of $x \in X_{\dot q}^{\ir} $, $r^{-1}(x) $ consists of a single regular irreducible representation $\rho$ which satisfies for any generator $\xi\in H^1_\rho(E_K)$:
$$\langle r^*\xi, \mu' \rangle \ne 0$$
\end{enumerate}
If these assumptions are satisfied, any representation $\rho \in X_{\dot q } ^{\ir} \cup X_{\dot q} ^{\ab} $ has a unique extension $\rho_K \in \mo (E_K)$. We denote by $\T_{\mu'} (\rho_K)$ the torsion of $\rho_K$ normalized by $\mu'$. It is the complex number number defined by 
$$ \T_{\mu'} (\rho_K) =   \T ( \rho_K) / \om ( \mu', \cdot )  .
$$  

For any real $\dot q$, consider the flat section of $L^k \rightarrow E$ over the affine line $\dot q \la ' + \R \mu'$ which is equal to $1$ at $\dot q \la'$. It is explicitly given by
$$ \dot q \la ' + x \mu' \rightarrow \exp ( 2 i k \pi \dot q x) .$$
When $\dot q \in ( 2k)^{-1} \Z$, this section descends to a section $t^k_{\dot q}$ of the $k$-th power of the Chern-Simons bundle $L_{CS} \rightarrow \mo ( \Si)$ over the circle $C_{\dot q}$. 

\begin{theo}\label{conj-witten-gen}
 Let $0< q_m < q_M < 1/2$ such that any $\dot q \in [ q_m , q_M]$ satisfies the previous assumptions I1 and I2. Then for any integers $k>0$ and $\ell$ such that $\ell/2 k \in  [ q_m , q_M]$ we have
$$ Z_{k, \ell }  =  k^{- 1/2} \sum_{\rho \in X^{\ab}_{\dot q } \cup X^{\irr}_{\dot q } } e^{i \frac{m ( \rho) \pi }{4}} k^{n ( \rho)} \la_k ( \rho) \langle \CS^k ( \rho_K), t^k_{\dot q}  ( \rho)  \rangle   + O( k^{-\infty}) 
  $$  
where the $O(k^{-\infty})$ is uniform with respect to $k$ and $\ell$ and for any $\rho$,  $m( \rho)$ is an integer, $n( \rho)  = 0$, $-1/2$ according to whether $\rho$ is in  $X^{\irr}_{\dot q} $ or $X^{\ab}_{\dot q} $. Furthermore, $( \la_k ( \rho))$ is a sequence of complex numbers admitting an asymptotic expansion of the form 
$$ \la_k ( \rho) =   a_0 ( \rho) + a_1 ( \rho ) k^{-1} + a_2 ( \rho) k^{-2} + \ldots $$
with coefficients  $a_\ell ( \rho )  \in \C$, the leading one being given by
$$ a_0 ( \rho) = \begin{cases} 2^{-3/4} \bigl( \T _{\mu'} ( \rho_K) \bigr)^{1/2}  \text{ if $\rho \in X ^{\ir} _{\dot q}  $} \\ 
 2^{-1/4} \bigl( \T_{\mu'} ( \rho_K) \bigr)^{1/2} \text{ if $\rho \in X^{\ab}_{\dot q}$ } 
\end{cases} $$ 
where $\rho_K \in \mo (E_K)$ is the unique extension of $\rho$, $\T _{\mu'} ( \rho_K)$ is the $\mu'$ normalized torsion of $\rho_K$ and $\CS( \rho_K) \in L_{\CS, \rho}$ is the Chern-Simons invariant of $\rho_K$. 
\end{theo}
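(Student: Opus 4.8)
The plan is to adapt the proof of Theorem~\ref{theo:witt-conj}, the only genuinely new ingredient being that the colour $\ell$ now grows linearly with $k$, so that every semiclassical estimate has to be made uniform in $\dot q = \ell/2k \in [q_m,q_M]\subset(0,1/2)$. By the gluing axiom of the TQFT and the identification of $V_k(\Si)$ with $\Hilb_k^{\alt}$ from \cite{LJ1}, one has
$$ Z_{k,\ell}=\langle Z_k(E_K),\,Z_k(N,L,\ell)\rangle $$
in $\Hilb_k^{\alt}$, where $Z_k(N,L,\ell)$ is the state of the standard solid torus whose core is cabled by the colour $\ell$ Jones--Wenzl idempotent, transported to $V_k(\Si)$ by the $\operatorname{SL}_2(\Z)$-action attached to $\phi$. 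Both vectors are regarded as sections over $T=E/R$.

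The first and crucial step is the semiclassical description of the coloured solid torus state. In the basis $(\Psi_m)$ the coefficients of $Z_k(N,L,\ell)$ are obtained, after applying the $\phi$-action, from those of the standard colour $\ell$ solid torus by the $\operatorname{SL}_2(\Z)$-matrix of $\phi$; since the standard state is proportional to a vector of the form $(\sin(\pi(m+c)\ell/k))_m=(\sin(2\pi\dot q(m+c)))_m$, the coefficient of $\Psi_m$ is, up to a slowly varying factor, the sum of two complex exponentials linear in $m$ with frequencies $\pm 2\pi\dot q$, and the two branches are well separated precisely because $\dot q$ stays in $[q_m,q_M]$. Treating these modulated sums of coherent states by the standard construction, with $\dot q$ as a parameter, identifies $Z_k(N,L,\ell)$, uniformly for $\dot q\in[q_m,q_M]$, with a Lagrangian state microsupported on the circle $C_{\dot q}=\pi(\dot q\la'+\R\mu')$, with underlying flat section $t^k_{\dot q}$ and a nowhere vanishing symbol proportional to the half-form $\Om_{\mu'}\in\delta$ (defined, as $\Om_\la$ earlier, by $\Om_{\mu'}^2(\mu')=1$). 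This is the analogue of Theorem~3.3 of \cite{LJ1}, that is of (\ref{eq:tore_solide_1})--(\ref{eq:tore_solide_2}); the essential difference is that the oscillatory factor $\sin(2\pi\dot q\,\cdot)$ is now absorbed into the choice of Lagrangian rather than into the symbol, which is why $C_{\dot q}$ is shifted away from the orbifold points of $\mo(\Si)$ and the symbol does not degenerate.

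Next one localises the pairing. By Conjecture~\ref{conj:microsupport} the microsupport of $Z_k(E_K)$ lies in $r(\mo(E_K))$, by the previous step that of $Z_k(N,L,\ell)$ lies in $C_{\dot q}$, and their intersection is $X^{\ab}_{\dot q}\cup X^{\ir}_{\dot q}$, finite by (I1), (I2); hence, modulo an $O(k^{-\infty})$ uniform in $\ell$, the scalar product is a finite sum of integrals over small neighbourhoods of the points of $X^{\ab}_{\dot q}\cup X^{\ir}_{\dot q}$. Near a point of $X^{\ir}_{\dot q}$ the state $Z_k(E_K)$ is given by Conjecture~\ref{conj:irreducible} (phase $\CS(\rho_K)$, symbol $\sqrt{\T(\rho_K)}$, prefactor $k^{3/4}$), near a point of $X^{\ab}_{\dot q}$ by Conjecture~\ref{conj:abelian} (prefactor $k^{1/4}$), and (I1), (I2) say precisely that $r(\mo(E_K))$ and $C_{\dot q}$ meet transversally there. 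Each local integral is then evaluated by the pairing formula for Lagrangian states carried by transversal Lagrangians (\cite{l1}, Proposition~6.1 of \cite{LJ1}), which yields an asymptotic expansion in powers of $k^{-1}$ whose leading term is a pairing in the half-form bundle. Pairing $\sqrt{\T(\rho_K)}$ against $\Om_{\mu'}$ produces the $\mu'$-normalised torsion $\T_{\mu'}(\rho_K)=\T(\rho_K)/\om(\mu',\cdot)$; combined with the powers of $k$ from the two conjectures, from the coloured solid torus and from the stationary phase, and with the factor $2$ coming, as in Theorem~\ref{theo:witt-conj}, from the two-to-one projection $E^s/R\to\mo^s(\Si)$, this gives $2^{-3/4}(\T_{\mu'}(\rho_K))^{1/2}$ in the irreducible case, $2^{-1/4}(\T_{\mu'}(\rho_K))^{1/2}$ in the abelian case, the power $k^{-1/2}k^{n(\rho)}$ with $n(\rho)\in\{0,-1/2\}$, and, by (\ref{eq:gluing_Chern_Simons}), the phase $\langle\CS^k(\rho_K),t^k_{\dot q}(\rho)\rangle$; the torsion bookkeeping is the same as in the proof of Proposition~\ref{prop:gluing-formula-Rei}.

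Finally one checks that everything is uniform in $\dot q$: the cut-offs realising the localisation can be chosen to depend smoothly on $\dot q$, the transversality in (I1), (I2) is an open condition, hence uniform on the compact interval $[q_m,q_M]$, and the constants produced by the pairing formula depend continuously on $\dot q$; summing the finitely many contributions then gives the stated expansion with an $O(k^{-\infty})$ uniform in $k$ and $\ell$. The principal obstacle is the first step: for fixed $\ell$ the coloured solid torus is a Lagrangian state on the same circle as the empty one, with a symbol carrying the $\ell$ oscillations of $\sin$, whereas when $\ell$ grows linearly with $k$ this factor ceases to be a symbol and instead selects the shifted Lagrangian $C_{\dot q}$, so that one must control a genuinely two-parameter ($k,\ell$) WKB regime, uniformly in $\dot q=\ell/2k$, down to the level of the now non-vanishing symbol.
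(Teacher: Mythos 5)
Your proposal is correct and follows essentially the same route as the paper's proof: the gluing formula $Z_{k,\ell}=\langle Z_k(E_K), Z_k(N,L,\ell)\rangle$ (up to the anomaly factor), the semiclassical description of the coloured solid torus state as a Lagrangian state on the shifted circle $C_{\dot q}$ with flat section $t^k_{\dot q}$ and non-vanishing symbol proportional to $\Om_{\mu'}$, and then the half-form pairing formula of Proposition 6.1 of \cite{LJ1} at the transverse intersections exactly as in Theorem \ref{theo:witt-conj}, with uniformity in $\dot q\in[q_m,q_M]$ by compactness. The only divergence is that the step you single out as the principal obstacle (the uniform two-parameter description of the coloured state via $\operatorname{SL}_2(\Z)$-matrix coefficients and resummation of modulated coherent states) is obtained in the paper with no new WKB analysis: in the basis adapted to $(\mu',\la')$ the coloured state is exactly $\frac{e^{i\pi(\frac n4+\frac{n'}{2k})}}{\sqrt 2}(\Psi_\ell-\Psi_{-\ell})$ by Theorem 2.4 of \cite{LJ1}, and Proposition 3.2 of \cite{LJ1} approximates each $\Psi_{\pm\ell}$ by a Gaussian section concentrated on $C_{\pm\dot q}$ with an error exponentially small uniformly in $\ell$, which yields your key lemma directly.
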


\begin{proof} 
Let $e_\ell = Z_k (N, L, \ell, 0) \in V_k ( \Si)$ so that 
$$ Z_{k,\ell} = (\tau_k )^m \langle Z_k ( E_K) , e_{\ell} \rangle $$ 
for some integer $m$. Here $\tau_k = e^{3i \pi /4 - 3 i \pi /2k} $ is the anomaly factor. Let $\Om_{\mu'} \in \delta$ be such that $\Om_{\mu'} ^2( \mu') = 1$. By Theorem 2.2 of \cite{LJ1}, there exists a unique orthonormal basis $(\Psi_\ell) _{\ell \in \Z / 2 k \Z}$ of $\Hilb_k$ such that
$$ T^*_{ \mu'/2k} \Psi_{\ell} = e^{ i \ell \frac{\pi}{ k}} \Psi_\ell, \qquad T^*_{ \la '/2k} \Psi_{\ell} =  \Psi_{\ell+1} 
$$ 
and 
$$ \Psi_\ell (0) = \Theta_k ( 0, \tau) \Bigl( \frac{k}{2 \pi} \Bigr) ^{1/2} \Om_{\mu'}  .$$
By Theorem 2.4 of \cite{LJ1}, there exist integers $n$ and $n'$ such that
$$ e_{\ell} = \frac{e^{ i \pi ( \frac{n}{4} + \frac{ n'}{ 2k} )} }{ \sqrt 2} ( \Psi_{\ell} - \Psi_{- \ell} )  
$$ 
Finally by Proposition 3.2 of \cite{LJ1}, we have the following estimate
$$ \Bigl| \Psi_{\ell} (x) - \Bigl( \frac{ k } { 2 \pi } \Bigr)^{1/4} T^*_{\ell \la'/2k} t^k (x) \otimes \Om_{\mu'} \Bigr| \leqslant C(\delta) e^{-k / C(\delta)}
$$ 
for any $ \delta \in (0,1)$ and $x \in - \frac{\ell}{2k} \la' + [ \delta, \delta] \la' + \R \mu'$. Here $t$ is the holomorphic section of $L \rightarrow E$ whose restriction to $\R \mu'$ is equal to $1$. The final result follows by an application of the pairing formula of Proposition 6.1 of \cite{LJ1} as for Theorem \ref{theo:witt-conj}. 
\end{proof}

\section{The figure eight knot state}\label{sec:huit}

let $E_8$ be the exterior of the figure eight knot. The moduli space $\mo ^{\irr} ( E_8)$ is diffeomorphic to a circle, see for instance \cite{klassen}. The restriction map $\mo ^{\irr} ( E_8) \rightarrow \mo^s( \Si)$ is an immersion with one double point $x_o=\pi(\frac\la 4)$, its image is shown in Figure \ref{fig:carac-huit}.

$$ r( \mo ^{\ir} ( E_8) )  = \{ \pi ( p \mu + q \la ), \;   \cos(2\pi p)-\cos(8\pi q)+\cos(4\pi q)+1 = 0 \} \setminus\{\pi(\mu/2)\}$$
Its intersection with $r( \mo^{\ab}(E_8))$ is reduced to the point $x_o$. So $r( \mo ^{\ir} ( E_8) ) \setminus \{ x_o \} $ is the disjoint 
union of two embedded open arcs in $\mo^{s} ( \Si) \setminus r ( \mo^{\ab} (E_8))$. 

\begin{figure}[htbp]
\centering
  \def\svgwidth{10cm}
 \executeiffilenewer{huit-carac.svg}{huit-carac.pdf}%
 {inkscape -z -D --file=huit-carac.svg %
 --export-pdf=huit-carac.pdf --export-latex}%
 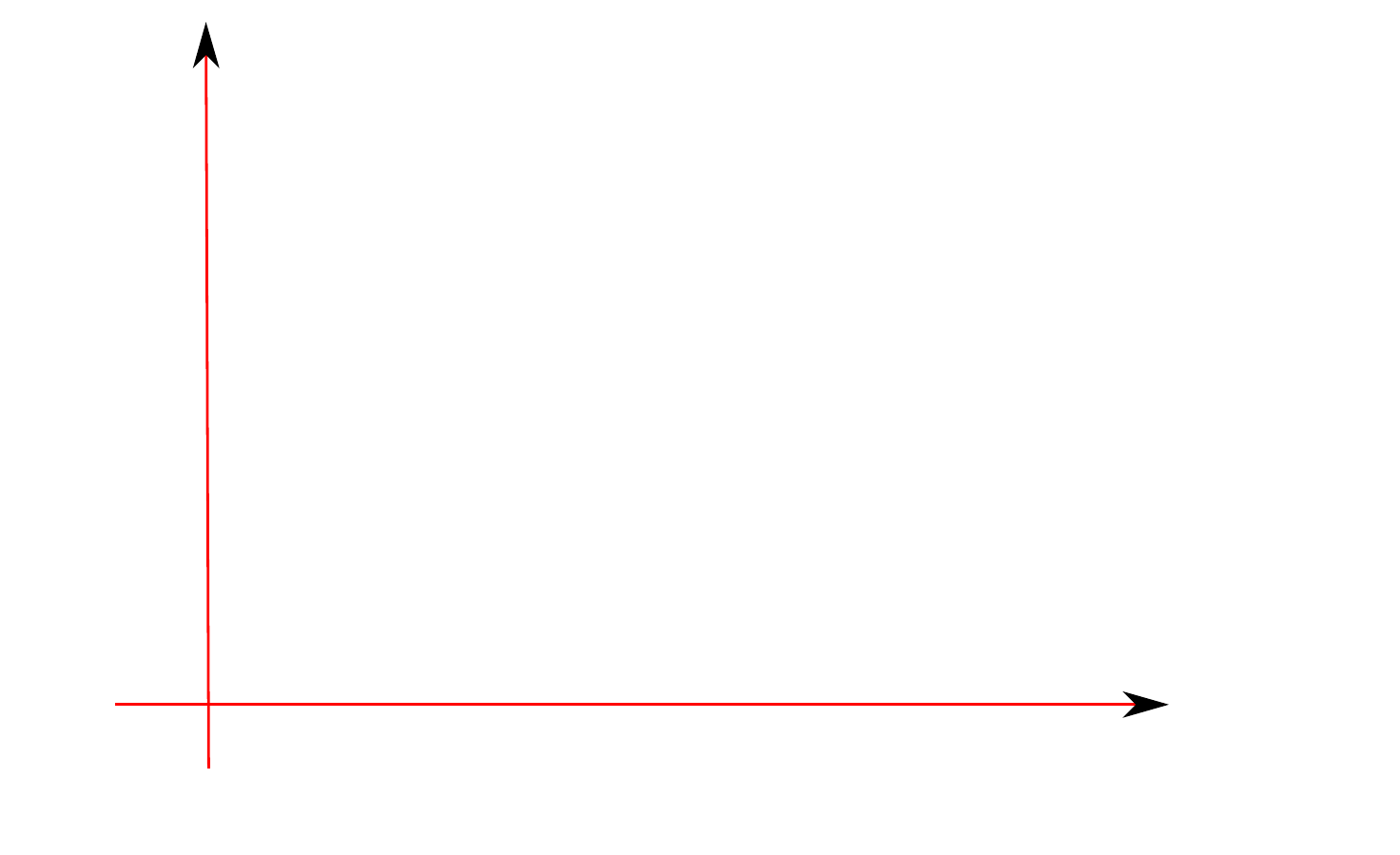%

  \caption{Projection of the character variety of the Figure eight knot}
  \label{fig:carac-huit} 
\end{figure}

As it will be proved in Lemma \ref{sec:racine-torsion},  the bundle $r^* \delta \rightarrow \mo ^{\irr} ( \Si)$ has a section $\si$ such that $\si^4 = \T^2$, with $\T$ the Reidemeister torsion normalized as in Section  \ref{sec:reidemeister-torsion}. The aim of this section is to prove the following theorem. 

\begin{theo}\label{theo:noeud_de_8}
We have on the set $V=\mo^{s} ( \Si) \setminus r ( \mo^{\ab} (E_8)) $
$$Z_k (E_8)   =   \la_k e ^{i \frac{m\pi}{4}} \frac{k^{3/4}}{4\pi^{3/4}} F^k g(\cdot ,k ) + O(k^{-\infty})$$
where the $O( k^{-\infty})$ is uniform on any compact set of $V$, $m$ is an integer
\begin{itemize} 
\item[-] $F$ is a section of $L \rightarrow V$ such that  $F(r(\rho)) = \CS (\rho)$ for any $\rho \in  \mo^{\irr} ( E_8) \setminus r^{-1} (x_o)$ and which satisfies the Cauchy-Riemann equation up to a term vanishing to infinite order along $r( \mo ^{\irr}(E_8) )$. 
\item[-] $g(\cdot , k)$ is a sequence of $\Ci (V,\delta)$ admitting an
  asymptotic expansion of the form $g_0 + k^{-1} g_1 +  \ldots$ with $g_0 (r ( \rho) )= \si( \rho)  $ for any $\rho \in  \mo^{\irr} ( E_8) \setminus r^{-1} (x_o)$.
\item[-] $\la_k$ is a sequence of complex numbers equal to $1 + O(k^{-1})$.    
\end{itemize}
\end{theo}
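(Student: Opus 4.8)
\emph{Strategy.} The plan is to implement the scheme sketched in the introduction. I would start from an inhomogeneous $q$-difference equation $P(M,L,t)\,J^{4_1}=R(t,t^n)$ satisfied by the colored Jones polynomials of the figure eight knot, with $P$ quadratic in $L$. Since the coefficients of $Z_k(E_8)$ in the basis $(\Psi_\ell)$ of $\Hilb_k$ are the values $J^{4_1}_\ell(-e^{i\pi/2k})$ and $M,L$ act on $\Hilb_k$ as the Heisenberg elements $(\mu/2k,1)$ and $(-\la/2k,1)$, this becomes a Toeplitz identity $P\,Z_k(E_8)=R$, cf. \eqref{eq:Toeplitz}, where $R$ is a Lagrangian state with microsupport $r(\mo^{\ab}(E_8))$ and symbol built from the Alexander polynomial as in \cite{LJ1}. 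The first task is to compute the principal and subprincipal symbols of the Toeplitz operator $P$ explicitly as functions on $\mo(\Si)$, via the Toeplitz symbol calculus for the generators $M,L$ and the fact that $t\to-1$ as $k\to\infty$; the crucial point will be that the principal symbol of $P$ vanishes along $r(\mo^{\irr}(E_8))$, in accordance with the equation $\cos(2\pi p)-\cos(8\pi q)+\cos(4\pi q)+1=0$, while its differential does not vanish along this curve away from the double point $x_o$.

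\emph{Microlocalisation on $V$.} On $V=\mo^{s}(\Si)\setminus r(\mo^{\ab}(E_8))$ the microsupport $r(\mo^{\ab}(E_8))$ of $R$ meets $r(\mo^{\irr}(E_8))$ only at $x_o\notin V$, so $R=O(k^{-\infty})$ there and the equation reads $P\,Z_k(E_8)=O(k^{-\infty})$ on $V$. Since Conjecture \ref{conj:microsupport} is known for the figure eight knot \cite{LJ1}, the microsupport of $Z_k(E_8)$ on $V$ lies in the two open arcs forming $r(\mo^{\irr}(E_8))\cap V$. Applying the description of the microlocal kernel of a Toeplitz operator near a non-degenerate zero of its principal symbol \cite{l3,l4}, I would deduce that on each of these arcs $Z_k(E_8)$ equals, up to a scalar factor $\la_k$ and a $O(k^{-\infty})$, a Lagrangian state supported by the arc: its phase restricts to a flat section of $L_{\CS}$ along the arc, hence, up to a constant, coincides with the Chern-Simons section $F$ (using the flatness of $\rho\mapsto\CS(\rho)$ on $\mo^s(E_K)$, Section \ref{sec:chern-simons-invar}) and extends to $V$ satisfying the Cauchy-Riemann equation to infinite order along the arc, while its leading symbol $g_0\in\Ci(V,\delta)$ is nowhere vanishing along the arc and solves there a first order linear transport ODE governed by the principal and subprincipal symbols of $P$.

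\emph{The transport equation for the Reidemeister torsion.} The core of the argument — and, I expect, the main obstacle — is to recognise this transport ODE as the equation satisfied by a square root of the Reidemeister torsion, i.e. to show that the section $\si$ of $r^*\delta\to\mo^{\irr}(\Si)$ produced in Lemma \ref{sec:racine-torsion}, characterised by $\si^4=\T^2$, solves the transport ODE on each arc. Using the explicit torsion $\T(E_8,\rho)=\frac{2^{3/2}\pi\,|r^*\mathrm{d}p|}{1-4\cos(4\pi q)}$ of \eqref{eq:torsion_8} together with the explicit subprincipal symbol of $P$, this should reduce to a differential identity between the Reidemeister torsion and the defining polynomial of the character variety along its branches — the new phenomenon emphasised in the introduction. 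Once it is settled, the normalisation $g_0(r(\rho))=\si(\rho)$ is obtained by absorbing the residual constant into $\la_k$ and the phase $e^{im\pi/4}$.

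\emph{Normalisation of the constant.} It remains to show that $\la_k$, a priori depending on the arc, can be taken equal to $1+O(k^{-1})$ on both. The symmetries of $Z_k(E_8)$ from Section \ref{sec:knot-state}, in particular the identity $T^*_{\la/2}Z_k(E_8)=-Z_k(E_8)$, exchange the two arcs, so that a single constant survives; I would determine it by performing the $\pm1$ Dehn filling of the figure eight knot, which produces the Brieskorn sphere $\Sigma(2,3,7)$. The pairing formula \eqref{eq:pairing} then expresses $Z_k(\Sigma(2,3,7))$ in terms of $\la_k$, the solid torus state $Z_k(N)$, and the Chern-Simons and torsion data already identified, and comparison with Hikami's proof \cite{hikami2} of the Witten asymptotic conjecture for $\Sigma(2,3,7)$ forces $\la_k=1+O(k^{-1})$. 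Combining the Lagrangian structure, the Chern-Simons phase, the torsion symbol via the transport equation, and this normalisation then yields the asserted expansion on $V$.
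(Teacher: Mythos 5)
Your proposal follows essentially the same route as the paper's own proof: the $q$-difference relation of Proposition 4.4 of \cite{LJ1} turned into the Toeplitz equation (\ref{eq:eq_q_difference}), microlocalisation away from $\{p=0\}$ and the one-dimensionality of the space of microlocal solutions near the regular zeros of the principal symbol (Theorem \ref{theo:lagr-micr-solut}), the identification of the leading symbol via the transport equation satisfied by $\mathrm{d}p/(1-4\cos(4\pi q))$, the symmetry $T^*_{\la/2}Z_k(E_8)=-Z_k(E_8)$ together with the (anti)commutation of $Q$ to reduce to a single constant $\la_k$, and finally its determination through the $\pm1$ surgery to $\Sigma(2,3,7)$ and Hikami's asymptotics. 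The steps you leave as computations (the explicit transport identity for the torsion, the equality of the phases at the two irreducible points, and the fact that the abelian contribution to the Brieskorn pairing is of lower order) are exactly the ones the paper carries out, so no change of method is involved.
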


The result is weaker than Conjecture \ref{conj:irreducible} because of the sequence $\la_k$.  Our proof relies on a $q$-difference equation satisfied by the Jones polynomials of the figure eight knot. By Proposition 4.4 in \cite{LJ1}, the figure eight knot state $Z_k (E_8)$ satisfies the following equation:
\begin{gather} \label{eq:eq_q_difference}
 Q_k Z_k (E_8) = R_k Z^0_k
\end{gather}
Here $Q_k$ and $R_k$ are the operators
\begin{xalignat}{2} \notag
Q_k &=(q^{-1}M^2-qM^{-2})L+(qM^2-q^{-1}M^{-2})L^{-1} +(M^2-M^{-2})(-M^4\\ & -M^{-4}  +M^2+M^{-2}+q^2+q^{-2}),  \label{eq:def_Q} \\  \notag
R_k  & =  ( M^5 + M^{-5}  + M^3 + M^{-3} - ( q^2 + q^{-2} ) ( M + M^{-1} ) ) 
\end{xalignat}
with $q = e^{i \pi /k}$ and  $Z^0_k$ is the section of $\Hilb_k$ given by 
$$ Z^0_k = \frac{1}{2i \sqrt k} 
\sum_{\ell \in \Z / 2 k \Z } \Psi_\ell.   
$$

\subsection{Toeplitz operator and Lagrangian states}  

In this section we introduce the analytical tools necessary to prove Conjecture \ref{conj:irreducible} for the figure eight knot. It is convenient to work on the torus $T =  E/R$, because the quotient $E/ R \rtimes \Z_2$ is singular and $E$ is not compact. Abusing notation, we denote again by $\delta$ and $L$ the line bundles over $T$ obtained from the line bundles $\delta$ and $L$ over $E$. So $\Hilb_k$ consists in the holomorphic sections of $L^k \otimes \delta \rightarrow T$. The group $(R\rtimes \Z_2)/R=\Z_2$ acts linearly on $\Hilb_k$, the invariant subspace being $\Hilb_k^{\alt}$. 

\subsubsection{Toeplitz operators}

Let $\Hilb_k^2$ be the space of sections of $L^k \otimes \delta \rightarrow T$ which are locally of class $L^2$. We denote by $\Pi_k$ the orthogonal projector of $\Hilb^2_k$ onto the (closed) finite dimensional subspace $\Hilb_k$.  For any bounded function $f \in \Ci (T)$ we denote by $M(f)$ the operator acting on $\Hilb^2_k$ by multiplication by $f$.

A {\em Toeplitz operator} is a family $(T_k \in \End ( \Hilb_k), \; k \in \Z_{>0})$ of the form 
$$ T_k  =  \Pi_k M(f(\cdot, k ) ) + R_k : \Hilb_k \rightarrow \Hilb_k , \qquad k=1,2, \ldots $$
where $(f( \cdot, k))_k$ is a sequence of $\Ci ( T )$ which admits an asymptotic expansion $$ f( \cdot, k ) = f_0 + k^{-1} f_1 + \ldots $$ for the $\Ci$ topology, with coefficients $f_0, f_1, \ldots \in \Ci ( T)$. Furthermore the family $(R_k \in \End ( \Hilb_k), \; k \in \Z_{>0}) $ is a $O(k^{-\infty})$, i.e. for any $N$, $ \| R_k \| =O(k^{-N})$. Here $\|\cdot \|$ is the uniform norm of operators. 

As a result, the coefficients $f_0, f_1, \ldots$ are uniquely determined by the family $(T_k)_k$. We call $f_0$ the principal symbol of $(T_k)$ and $f_1 -\frac{1}{2} \Delta f_0$ the subprincipal symbol of $(T_k)$.

It follows from Theorem 3.1 of \cite{LJ1} that the families $(Q_k)$ and $(R_k)$ defined in (\ref{eq:def_Q}) are Toeplitz operator of $\Hilb_k$. The  principal and subprincipal symbols of $(Q_k)$ are given by: 
\begin{xalignat*}{1}
 & f_0 (x)  = -4i\sin(4\pi q)\big(\cos(2\pi p)-\cos(8\pi q)+\cos(4\pi q)+1) \\
 & f_1 (x) = -8\pi  \cos(4\pi q)\sin(2\pi p)
\end{xalignat*}
where $x = [ p \mu + q \la ] \in T$. 
\subsubsection{Microlocal solution} 

Let $U$ be an open set of $E/R$. Consider a family $( \Psi_k \in \Ci( U, L^k \otimes \delta), k \in \Z_{>0})$.  We call such a family a {\em local state} on $U$. For any $N \in \Z$, we say that $(\Psi_k)$ is a $O(k^{N})$ if for any compact subset $K$ of $U$,  there exists $C$ such that $$| \Psi_k ( x) | \leqslant C k^{N}, \qquad \forall x \in K.$$ We say that $(\Psi_k)$ is admissible (resp. a $O(k^{-\infty})$) if for some integer $N$ (resp. for any negative integer $N$), it is a $O(k^{N})$.

Let $(T_k)$ be a Toeplitz operator and assume that $(\Psi_k)$ is an admissible local state on $U$. We say that $(\Psi_k)$ is a microlocal solution on $U$ of 
\begin{gather}\label{eq:eq}
T_k \Psi_k =0 
\end{gather}
if for any $x$ in $U$, there exists a function $\varphi \in \Ci(M)$ such that $\operatorname{supp} \varphi \subset U$, $\varphi \equiv 1$ on a neighborhood of $x$ and 
$$ \Pi_k (\varphi \Psi_k) = \Psi_k + O(k^{-\infty}), \quad T_k ( \Pi_k ( \varphi \Psi_k)) = O( k^{-\infty})$$
on a neighborhood of $x$.

As expected although the proof is not so obvious, for any family $( \Psi_k \in \Hilb_k, k \in \Z_{>0})$ such that $T_k \Psi_k =0$, the restriction $(\Psi_k |_U )$ is a microlocal solution on $U$.  Observe that the set ${\mathcal{S}}$ of microlocal solution of (\ref{eq:eq}) is a ${\mathcal{R}}$-module where ${\mathcal{R}}$ consists in the admissible sequences $(\la_k)$ of complex number. Here admissible means that $\la_k = O( k^{N}) $  for some $N$. Furthermore ${\mathcal{S}}$ contains as a submodule the set of local state $(\Psi_k)$  which are a $O(k^{-\infty})$. It is known that if the principal symbol of $(T_k)$ does not vanish on $U$, ${\mathcal{S}}$ consists only on the $O(k^{-\infty})$ local states. This assertion is equivalent to the fact that the microsupport of any admissible family $( \Psi_k \in \Hilb_k)_k$ satisfying  $T_k \Psi_k = 0 $ is contained into the zero set of the principal symbol of $(T_k)$. 

Consider now the figure eight knot state $Z_k^8$. Let $X$ be the following subset of $T$ 
\begin{gather} \label{eq:def_X}
 X = \{   \cos(2\pi p)-\cos(8\pi q)+\cos(4\pi q)+1 = 0  \} .
\end{gather}
The set $X$ intersects the circle $\{ p = 0 \}$ at two points $P$ and $P'$.  Our aim is to describe the asymptotic behaviour of $(Z_k (E^8))$ on a neighborhood of $X \setminus \{ P, P'\}$. 

By Theorem 5.4 of \cite{LJ1}, the microsupport of the left hand side $R_k Z_k^0$ of Equation (\ref{eq:eq_q_difference}) is contained in $\{ p = 0 \}$. So the restriction of the knot state $Z_k(E^8) $ to the open set $T \setminus \{ p = 0 \}$ 
is a microlocal solution of $Q_k \Psi_k =0$.

\subsubsection{Lagrangian microlocal solution}

Consider a Toeplitz operator $(T_k)$ of $\Hilb_k$ with a real valued principal symbol $f_0$. Then on a neighborhood of any regular point of $f_0$, the microlocal solutions of $T_k \Psi_k = 0$ are Lagrangian states supported by $f^{-1}_0 (0)$ in the following sense.

Let $U$ be any open set of $E$ such that $I= f_0^{-1}(0) \cap U$ is diffeomorphic to an interval and $d f_0$ does not vanish on $I$. Let $t$ be a flat section of $L \rightarrow I$ with constant norm equal to 1. Let $\si \in \Om^1 ( I)$ be a non vanishing solution of the following transport equation
\begin{equation} \label{eq:transport}
\sL_{X_0}\si +2 if_1 \si=0
\end{equation}
Here $X_0$ is the restriction to $I$ of the Hamiltonian vector field of $f_0$.  

\begin{theo} \label{theo:lagr-micr-solut}
The equation $T_k \Psi_k =0$ has a microlocal solution $U$ of the form $F^k g (\cdot ,k)$ 
where 
\begin{itemize} 
\item[-] $F$ is a section of $L \rightarrow U$ satisfying the Cauchy-Riemann equation up to a section vanishing to infinite order along $I$ and $F|_I = t$. 
\item[-]   $g (\cdot ,k) $ is a sequence $\Ci (U,\delta)$ admitting an asymptotic expansion for the $\Ci$ topology of the form $g_0 + k^{-1} g_1 + \ldots$
\item[-] The leading coefficient $g_0$ satisfies  $j^* \varphi ( g_0^{\otimes 2}) = \si $, where $\varphi$  is the squaring map of the half-form bundle $\delta$ and $j$ is the injection $I \rightarrow M$.  
\end{itemize} 
Furthermore for any admissible local  solution  $(\Psi_k \in \Ci ( U, L^k \otimes \delta), \; k \in \Z_{>0}) $ of $T_k \Psi_k =0$ on $U$, we have 
$$ \Psi_k = \la_k F^k g (\cdot ,k) + O(k^{-\infty})$$ 
for some admissible sequence $(\la_k)$ of complex number.
\end{theo}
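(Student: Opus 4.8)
The plan is to proceed in the classical WKB style for Toeplitz operators, quoting the symbolic calculus developed in \cite{l3} and \cite{l4}. First I would normalize the geometric data on $U$: since $I=f_0^{-1}(0)\cap U$ is a regular one-dimensional, hence Lagrangian, submanifold of $E$, shrinking $U$ we may assume it retracts onto $I$, and the flat unit-norm section $t$ of $L\to I$ extends to an \emph{approximately holomorphic} section $F$ of $L\to U$, meaning $\bar\partial F$ vanishes to infinite order along $I$; existence and uniqueness of $F$ up to a section vanishing to infinite order along $I$ follow because the curvature of $L$ is the symplectic form, which restricts to zero on $I$. The key consequence of this normalization is that, microlocally near $I$, the local state $F^k g(\cdot,k)$ is fixed by $\Pi_k$ up to a $O(k^{-\infty})$, and the operator $\Pi_k M(f(\cdot,k))$ acts on such a state by an explicit asymptotic formula obtained from the Laplace method applied to the Bergman kernel.

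Then I would plug the ansatz $\Psi_k=F^k g(\cdot,k)$, with $g(\cdot,k)\sim g_0+k^{-1}g_1+\cdots$ in the $\Ci$ topology, into $T_k\Psi_k=\Pi_k M(f(\cdot,k))(F^k g)+O(k^{-\infty})$ and expand in powers of $k^{-1}$. The leading term is $f_0 F^k g_0$, which already vanishes on $I$; the vanishing of $\bar\partial F$ and of $f_0$ to the appropriate orders along $I$, together with the Gaussian concentration of $\Pi_k$ on the diagonal, lets one organize the whole expansion as an asymptotic series of local states concentrated on $I$. The order $k^0$ term, restricted to $I$, yields exactly the transport equation $\sL_{X_0}\si+2if_1\si=0$ for $\si=j^*\varphi(g_0^{\otimes 2})$, in which the subprincipal data of $(T_k)$ and the half-form transport combine; solving this linear first-order ODE along the flow of $X_0$ on $I$ determines $g_0$ up to one multiplicative constant, which we fix. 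The orders $k^{-j}$ give inhomogeneous transport equations $\sL_{X_0}\si_j+2if_1\si_j=(\text{expression in }g_0,\dots,g_{j-1})$, each solvable by one integration along $X_0$; a Borel summation of the $g_j$ produces $g(\cdot,k)$ and hence the desired microlocal solution $F^k g(\cdot,k)$.

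For the second assertion, let $(\Psi_k)$ be any admissible microlocal solution on $U$. Since $f_0$ is elliptic on $U\setminus I$, the standard Toeplitz elliptic estimate gives $\MS(\Psi_k)\subset I$, so $(\Psi_k)$ is a $O(k^{-\infty})$ away from $I$; by the off-diagonal decay of $\Pi_k$ this forces $\Psi_k$ to have, microlocally near $I$, the normal form $F^k h(\cdot,k)$ for some classical symbol $h(\cdot,k)\sim h_0+k^{-1}h_1+\cdots$. Substituting into $T_k\Psi_k=O(k^{-\infty})$ and matching orders, $h_0$ satisfies the same homogeneous transport equation as $g_0$, so on the connected curve $I$ there is a constant $\la_{k,0}$ with $h_0=\la_{k,0}\,g_0$. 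Replacing $\Psi_k$ by $\Psi_k-\la_{k,0}F^k g(\cdot,k)$ kills the leading symbol, leaving a microlocal solution whose symbol starts at order $k^{-1}$; iterating and Borel-summing the corrections yields an admissible sequence $(\la_k)$ with $\Psi_k=\la_k F^k g(\cdot,k)+O(k^{-\infty})$.

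The hard part will be the two analytic inputs I am treating as black boxes: first, the precise symbolic calculus for $\Pi_k M(f)\Pi_k$ acting on the Lagrangian state $F^k g$ with $F$ only approximately holomorphic, in particular tracking how the subprincipal symbol and the half-form factor assemble into the coefficient $2if_1$ of the transport equation; and second, the normal-form statement that an admissible microlocal solution concentrated on the curve $I$ must be of the form $F^k h(\cdot,k)$ with $h$ a genuine symbol. Both are available in \cite{l3} and \cite{l4} in sufficient generality, so the remaining work is the transport-equation bookkeeping on the one-dimensional manifold $I$, which is routine.
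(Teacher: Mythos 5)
The paper gives no proof of this theorem at all: it is quoted as a known result from the first author's work on quasimodes of Toeplitz operators \cite{l3}, \cite{l4}, and your WKB sketch (approximately holomorphic extension $F$ of the flat section $t$, the ansatz $F^k g(\cdot,k)$, the hierarchy of transport equations with the subprincipal symbol producing $\sL_{X_0}\si+2if_1\si=0$, and the rank-one module of microlocal solutions giving uniqueness up to an admissible sequence $\la_k$) is exactly the argument of those references, so your approach coincides with the paper's. One caution: the phrase that the off-diagonal decay of $\Pi_k$ ``forces'' the normal form $F^k h(\cdot,k)$ overstates what the projector estimates give --- that normal form (equivalently, the freeness of rank one of the module of microlocal solutions near a regular point of $f_0^{-1}(0)$) is itself the main content of \cite{l3}, which you correctly list as a black box, so the logic stands provided that citation carries the weight rather than the projector decay.
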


As in the previous section, let ${\mathcal{S}}$ be the ${\mathcal{R}}$-module of microlocal solutions of $T_k \Psi_k = 0 $ on $U$. The previous theorem shows that ${\mathcal{S}}/ O(k^{-\infty})$ has dimension 1 and gives the asymptotic behaviour of a generator.  

For the figure eight knot state, we can apply this result on each of the four open rectangles: 
\begin{gather} \label{eq:rectangle_Ui}
U_i = \bigl\{ [p\mu+q\la], \;p \in (0,1),\; q \in (\tfrac{i}{4},\tfrac{i+1}{4}) \bigr\}, \qquad i=0,1,2,3
\end{gather}
Indeed, since each $U_i$ is contained in $T \setminus \{ p = 0 \}$, the restriction of $Z_k (E^8)$ to $U_i$ is a microlocal solution of $Q_k \Psi_k = 0 $. Furthermore, $I_i = U_i \cap f^{-1}_0 (0)$ is connected.

Observe also that $U_1 \cup U_2 \cup U_3 \cup U_4$ is a neighborhood of $X \setminus \{ P, P'\}$.  So it is sufficient to consider these open sets to prove Theorem \ref{theo:noeud_de_8}. Furthermore, $\Z_2$ acts on $T$ by switching $U_1$ with $U_4$  and $U_2 $ with $U_3$. Since $Z_k(E^8)$ is $\Z_2$-invariant, it is actually sufficient to consider two of these rectangles. We will use in Section \ref{sec:symmetry} the fact that $Z_k(E^8)$ satisfies an additional symmetry which reduces everything to one rectangle. 

Another remark is that we can define the flat section $t$ from the Chern-Simons invariant in the following way. The map sending $x \in I_i$ to the representation $\rho \in \mo ^{\irr} ( E_8)$ such that $[x]= r( \rho) $ is a diffeomorphism from $I_i$ onto an open set of $  \mo ^{\irr} ( E_8)$. Let $t$ be the section of $L \rightarrow I_i$ given by $t( x) = \CS ( \rho)$ where $[x]= r( \rho) $. This section is flat as recalled in Section \ref{sec:chern-simons-invar}.

\subsection{Torsion and transport equation} 

We prove in this part that the torsion of the exterior of the figure eight knot satisfies the transport equation \eqref{eq:transport} where $f_0$ and $f_1$ are the principal and subprincipal symbols of the operator $ (Q_k)$. 

More precisely since $f_0$ and $f_1$ do not descend to the quotient $\mo ( \Si)$, we lift everything to the torus $T$. The set $X$ defined in (\ref{eq:def_X}) is the reunion of two immersed circles and two points. Let $\si$ be the restriction of $ \mathrm{d} p / (1 - 4 \cos ( 4 \pi q ) )$ to these circles. Since $f_0$ vanishes on $X$, the Hamiltonian vector field of $f_0$ is tangent to $X$.  

\begin{prop} 
One has that $\sL_{X_0}\si +2 if_1 \si=0$. 
\end{prop}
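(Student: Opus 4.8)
The plan is to rewrite this identity of $1$-forms as a scalar ordinary differential equation along the flow of $X_0$ and then verify it by a short trigonometric computation; the conceptual input is that the torsion denominator $1-4\cos(4\pi q)$ is, up to an explicit factor, the $q$-derivative of the function cutting out $X$.

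First I would work in the affine coordinates $(p,q)$ on $T$ in which $\om$ is a constant multiple of $\mathrm{d}p\wedge\mathrm{d}q$, and set $g=\cos(2\pi p)-\cos(8\pi q)+\cos(4\pi q)+1$, so that $f_0=-4i\sin(4\pi q)\,g$ and $X=\{g=0\}$. Since $\sin(4\pi q)$ depends on $q$ only and $g$ vanishes on $X$, the Leibniz rule for Hamiltonian vector fields gives $X_0=X_{f_0}|_X=-4i\sin(4\pi q)\,X_g|_X$; in particular $X_0$ is tangent to $X$, so $\sL_{X_0}\si$ may be computed in $T$ and then restricted to $X$. Using $\partial_p g=-2\pi\sin(2\pi p)$ and $\partial_q g=8\pi\sin(8\pi q)-4\pi\sin(4\pi q)=4\pi\sin(4\pi q)(4\cos(4\pi q)-1)$, I get on $X$ an explicit expression for $X_0$ whose $\partial_p$-component is a constant times $\sin^2(4\pi q)(4\cos(4\pi q)-1)$ and whose $\partial_q$-component is a constant times $\sin(4\pi q)\sin(2\pi p)$. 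Note that $4\cos(4\pi q)-1$ never vanishes on $X$, since $4\cos(4\pi q)=1$ would force $\cos(2\pi p)=-17/8$; hence $\si$ is a smooth nowhere-vanishing $1$-form on the circles forming $X$.

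Next, on the one-dimensional manifold $X$ a $1$-form is determined, wherever $X_0\neq 0$, by its value on $X_0$, and since $[X_0,X_0]=0$ one has $(\sL_{X_0}\si)(X_0)=X_0\bigl(\si(X_0)\bigr)$; therefore the transport equation \eqref{eq:transport} is equivalent to the scalar equation $X_0(w)+2if_1w=0$ with $w:=\si(X_0)$, and it suffices to verify this on the dense subset of $X$ where $X_0\neq 0$. The key cancellation is that the factor $4\cos(4\pi q)-1$ in the $\partial_p$-component of $X_0$ cancels the denominator of $\si$, so that $w=\si(X_0)=-4i\sin^2(4\pi q)$ depends on $q$ alone. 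Then $X_0(w)$ equals the $\partial_q$-component of $X_0$ times $w'(q)=-32\pi i\sin(4\pi q)\cos(4\pi q)$, giving $X_0(w)=64\pi\sin^2(4\pi q)\sin(2\pi p)\cos(4\pi q)$; on the other hand, with $f_1=-8\pi\cos(4\pi q)\sin(2\pi p)$ one gets $2if_1w=-64\pi\sin^2(4\pi q)\sin(2\pi p)\cos(4\pi q)$. These are opposite, so $\sL_{X_0}\si+2if_1\si=0$, as claimed. The point requiring care is the bookkeeping of conventions — the sign in $\iota_{X_{f_0}}\om=\pm\,\mathrm{d}f_0$, the normalization of $\om$ fixed in Section \ref{sec:geom-quant}, and the precise form of the transport equation in Theorem \ref{theo:lagr-micr-solut} — which must be chosen consistently so that the coefficient there is exactly $2if_1$; with those fixed, the computation above is forced, and its content is precisely the identity $\partial_q g=4\pi\sin(4\pi q)(4\cos(4\pi q)-1)$ relating the symbol of $(Q_k)$ to the torsion denominator.
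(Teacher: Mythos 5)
Your proposal is correct and rests on the same ingredients as the paper's proof: the factorization $f_0=-4i\sin(4\pi q)\,g$, the tangency of $X_0$ to $X$, and the key cancellation that $\si(X_0)=-4i\sin^2(4\pi q)$ (the paper's $\sL_Y p = ba$, which makes the denominator $1-4\cos(4\pi q)$ drop out), with your explicit values matching the paper's conventions ($\om=4\pi\,\mathrm{d}p\wedge \mathrm{d}q$ and its sign choice for Hamiltonian vector fields). The only difference is organizational: you contract the $1$-form identity with $X_0$ and check the resulting scalar equation $X_0(w)+2if_1w=0$ on the dense set where $X_0\neq 0$, whereas the paper computes $\sL_{bY}\si$ directly as a $1$-form; your added remark that $4\cos(4\pi q)-1$ cannot vanish on $X$ is a pleasant bonus the paper leaves implicit.
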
 

Since $\si$ does not vanish anywhere, its absolute value $|\si|$ satisfies the same transport equation. This density is up to a constant the torsion $\T$ defined in Subsection \ref{sec:examples}.
\begin{proof} 
Introduce the following functions of $T$
\begin{gather*} 
h = \cos ( 2 \pi p ) + 1 - \cos ( 8 \pi q ) + \cos ( 4 \pi q ) \\
a = 1 - 4 \cos ( 4 \pi q ), \qquad b =\sin ( 4 \pi q ). 
\end{gather*}
So $X$ is the zero set of $h$, $f_0 = -4i b h $  and $\si = \mathrm{d} p / a $. We have 
\begin{xalignat}{2}  \notag
\mathrm{d} h = & - 2 \pi \sin ( 2\pi p ) \mathrm{d} p + \bigl(  8 \pi \sin ( 8 \pi q ) - 4 \pi  \sin ( 4 \pi q) \bigr) \mathrm{d} q \\ \notag
 = & - 2 \pi \sin ( 2\pi p ) \mathrm{d} p + \bigl(  16 \pi \sin ( 4 \pi q ) \cos ( 4 \pi q)  - 4 \pi  \sin ( 4 \pi q) \bigr) \mathrm{d} q \\ \label{eq:dh}
= & - 2 \pi \sin ( 2\pi p ) \mathrm{d} p - 4 \pi b a \mathrm{d} q 
\end{xalignat}
Let $Y$ be the Hamiltonian vector field of $h$. Since $h$ vanishes on $X$, the Hamiltonian vector field of $f_0$ coincides on $X$ with $-4i b Y$. Simplifying the factor $-4 i $,  we have to prove that 
\begin{gather}  \label{eq:tobeproved}
\sL_{b Y}\si + 4 \pi \cos (4 \pi q ) \sin ( 2 \pi p )   \si=0
\end{gather} 
We have that 
\begin{xalignat}{2} \notag
 \sL_{b Y}\si = & b \sL_Y \si + \si (Y) \mathrm{d} b \\ \label{eq:dersi}
= & b \Bigl( \frac{  \mathrm{d} \sL_Y p }{a}  - \frac{ \sL_Y a }{a^2}  \mathrm{d} p \Bigr) + \frac{ \sL_Y p } {a}  \mathrm{d} b  
\end{xalignat}
Since the symplectic form of $T$ is $ 4 \pi dp \wedge d q $, we have by (\ref{eq:dh})
$$ \sL_Y p = - \frac{1}{4 \pi} \partial_q h = ba $$
Furthermore $\sL_Y a = 16 \pi \sin ( 4 \pi q)  \sL_Y q = 16 \pi  b\sL_Y q  $ and 
$$  \sL_Y q =  \frac{1}{4 \pi} \partial_p h = - \frac{1}{2} \sin ( 2 \pi p)$$
where we used (\ref{eq:dh}) again. Inserting these expressions into (\ref{eq:dersi}) we obtain
\begin{xalignat*}{2} 
 \sL_{b Y}\si = & b  \Bigl( \frac{  \mathrm{d}( ba ) }{a}  + 8 \pi \frac{b  }{a^2} \sin ( 2 \pi p)   \mathrm{d} p \Bigr) + b   \mathrm{d} b   \\
= &  2 b \mathrm{d} b + \frac{ b^2}{a} \mathrm{d} a + 8 \pi \frac{ b^2 }{a^2} \sin ( 2 \pi p) \mathrm{d} p \\
= &   2 b \mathrm{d} b + \frac{ b^2}{a} \mathrm{d} a - 16 \pi \frac{ b^3 }{a} \mathrm{d} q 
\end{xalignat*}
where we used that $\mathrm{d} h =0$ on $X$ and (\ref{eq:dh}). Finally since  $\mathrm{d} a =  16 \pi b \mathrm{d} q$, we obtain 
\begin{xalignat*}{2}
 \sL_{b Y}\si = &  2b \mathrm{d} b \\
= & 8 \pi b \cos ( 4 \pi q ) \mathrm{d} q  \\
= & - 4 \pi \cos ( 4 \pi q) \sin ( 2 \pi p) \frac{\mathrm{d} p}{a} 
\end{xalignat*}
where we used again  that $\mathrm{d} h =0$ on $X$ and (\ref{eq:dh}). Since $\si = \mathrm{d} p /a$, this shows (\ref{eq:tobeproved}) and concludes the proof. 
\end{proof}

\subsection{Symmetry} \label{sec:symmetry}

Let $\Gamma = R \rtimes \Z_2$ and $\Ga'= R' \rtimes \Z_2$ with $R'$ the lattice $\mu \Z \oplus \frac{\la }{2} \Z$. It is easily checked that we can extend the action of $\Ga$ on the bundles $L$ and $\delta$ to $\Ga'$ in such a way that 
$$ (\la /2, 1 ). (x, v)  = (  x + \la/2, - v) , \qquad  (x, v) \in \delta 
$$ 
$$ (\la /2  , 1  ) .( x, v)  =  (  x +  \la/2 , e^{\frac{i}{2} \om (\la/2, x )} v) , \qquad  (x, v) \in L
$$  
The second formula is just the action of the element $(\la/2, 1)$ of the Heisenberg group. 
By Proposition \ref{sec:symm-knot-state}, the state of any knot is a $\Ga'$-invariant holomorphic section of $L^k \otimes \delta \rightarrow E$.

\subsubsection{On the half-form bundle} 

As previously denote by  $\tau$ the action of the generator of $\Ga'/\Ga$ on $\mo ( \Si) \simeq E/ \Ga$ and by $\si$ the action of $\rho_{-1}$ on $\mo (E_K)$. The restriction map $r: \mo (E_K) \rightarrow \mo ( \Si)$ intertwines $\tau $ and $\si$. Denote by $\tau_\delta$ the action of the generator of $\Ga'/\Ga$ on the half-form bundle $\delta \rightarrow \mo ( \Si)$. 

Consider now the complement of the figure eight knot. Its set of irreducible representations is a circle $C$, preserved by $\si$.  Denote by $\delta_C \rightarrow C$  the pull-back of $\delta \rightarrow \mo ( \Si)$ by $r$ and $\sigma_\delta$ the pull-back of $\tau_\delta$. We have an isomorphism $\varphi_C$ between $\delta_C^4$ and $(T^*C \otimes \C)^2$ defined as follows
$$ u^2 \in \delta_{C,p}^4  = \delta^4 _{r(p)} \simeq \bigl( \wedge^{1,0} T_{r(p)}^* \mo^s ( \Si) \bigr)^2 \rightarrow (r^*u)^2 \in \bigl( T^*C \otimes \C \bigr)^2 $$
This isomorphism intertwines the morphisms $\si_\delta^4 $ and $(\sigma^*)^2$.

The square of the torsion $\T$ is a section of $\bigl( T^*C \otimes \C \bigr)^2 $. 
\begin{lem} \label{sec:racine-torsion}
The bundle $\delta_C\rightarrow C $ has a smooth section $g$ such that $\varphi_C ( g^4) = \T^2$. It is unique up to multiplication by a power of $i$ and is $\si_\delta$-invariant. 
\end{lem}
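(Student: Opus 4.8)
The strategy is to reduce the existence of $g$ to the vanishing of a $\Z/4\Z$-valued monodromy, to identify that monodromy with a classical topological invariant of the figure-eight character curve, and to deduce uniqueness and the $\si_\delta$-symmetry by soft arguments. Since $C$ is a circle, $\delta_C$ is a trivial complex line bundle; I would fix a nowhere-vanishing section $t$ of it. As $\varphi_C$ is an isomorphism, a section $g$ with $\varphi_C(g^4)=\T^2$ exists if and only if the function $\psi:=\T^2/\varphi_C(t^4)\colon C\to\C^*$ admits a fourth root, i.e. if and only if its winding number is divisible by $4$; this residue in $\Z/4\Z$ does not depend on $t$. To compute it I would use that the preimage of $r(\mo^{\irr}(E_8))$ in $T=E/R$ is disconnected (it is the set $X$, a union of two immersed circles and two points), so the Lagrangian immersion $r$ lifts to a Lagrangian immersion $\tilde{r}\colon C\to T$; over $T$ the half-form isomorphism $\varphi$ is honestly defined and $K_j$ is constantly trivialised, whence $\varphi_C=(\tilde{r}^{*}\varphi)^{\otimes 2}$ and $\tilde{r}^{*}\varphi$ carries the constant section of $\delta_C^{\otimes 2}$ pulled back from $E$ to a nonzero constant multiple of $\tilde{r}^{*}\mathrm{d}z$, where $z$ is the flat holomorphic coordinate of $T$. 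Feeding this in, together with the formula $\T=2^{3/2}\pi\,|r^{*}\mathrm{d}p|/(1-4\cos 4\pi q)$ of (\ref{eq:torsion_8}) and the facts that $1-4\cos 4\pi q\geq 3$ and $r^{*}\mathrm{d}p$ is nowhere zero on $C$ (both checked directly on the equation of $X$), the winding number of $\psi$ equals $-2$ times the turning number of the immersed loop $\tilde{r}(C)\subset T$ measured with respect to the complex structure $j$.

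\textbf{Main step.} It then remains to compute this turning number, and I expect this step --- together with the identification just described --- to be the only genuine difficulty. Starting from the equation $\cos(2\pi p)-\cos(8\pi q)+\cos(4\pi q)+1=0$ of $X$ (cf. Figure~\ref{fig:carac-huit}): solving for $\cos 2\pi p$ forces $\cos 4\pi q\leq -\tfrac12$, and following a component $A$ of $X$ one checks that $A$ is a null-homotopic immersed circle in $T$ carrying exactly one transverse self-crossing, lying above the double point $x_o$. Lifting $A$ to $E\cong\R^{2}$ and applying Whitney's turning-number formula then gives turning number $\equiv 1+1\equiv 0\pmod 2$. Hence the winding number of $\psi$ is divisible by $4$, and a nowhere-vanishing section $g$ with $\varphi_C(g^4)=\T^2$ exists.

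\textbf{Uniqueness and symmetry.} If $g'$ is another such section then $(g'/g)^4\equiv 1$ on the connected set $C$, so $g'/g$ is a constant fourth root of unity; this gives the asserted uniqueness. For the symmetry, $\Ad_\rho$ depends only on the image of $\rho$ in $\bmo(E_8)$, so $\si^{*}\T=\T$ and hence $(\si^{*})^{2}\T^{2}=\T^{2}$. Since $\varphi_C$ intertwines $\si_\delta^{4}$ with $(\si^{*})^{2}$, the section $\si_\delta g$ again satisfies $\varphi_C((\si_\delta g)^4)=\T^{2}$, so by uniqueness $\si_\delta g=\zeta g$ for a constant $\zeta$ with $\zeta^{4}=1$; applying $\si_\delta$ once more and using $\si_\delta^{2}=\id$ forces $\zeta^{2}=1$, i.e. $\si_\delta g=\pm g$. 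The sign is $+$: the involution $\si$ fixes the two preimages in $C$ of the double point $x_o$, and at such a fixed point $\si_\delta$ acts on the fibre of $\delta_C$ by $+1$ (the stabilising element of the extended lattice group acts trivially on the half-form line), so a nowhere-vanishing section cannot be $\si_\delta$-anti-invariant. Therefore $\si_\delta g=g$.
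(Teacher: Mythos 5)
Your reduction of the existence of $g$ to a $\Z/4\Z$ winding-number obstruction, and your uniqueness/symmetry arguments, are sound; the symmetry step in particular matches the paper's setup (the stabilizing element $(\la/2,-1)\in\Ga'$ does act by $+1$ on the $\delta$-fibre over $\pi(\la/4)$ with the paper's conventions, and $\si$ does fix the two preimages of $x_o$). The genuine flaw is in your main step: the lifted curve $\tilde r(C)\subset T$ is \emph{not} null-homotopic. Writing $h=\cos(2\pi p)+1-\cos(8\pi q)+\cos(4\pi q)$ and $c=\cos(4\pi q)$, the component $A$ lies in the band $q\in[1/6,1/3]$ where $c\le -1/2$, and $\cos(2\pi p)=2c^2-c-2$ sweeps $[-1,1]$ monotonically on each of the subintervals $[1/6,1/4]$ and $[1/4,1/3]$; since (as you yourself note) $r^*\mathrm{d}p$ never vanishes on $C$, the coordinate $p$ is strictly monotone along $A$ and $p\circ\tilde r:C\to\R/\Z$ is a covering map, of degree $\pm 2$ because a generic value of $p$ has exactly two preimages on $A$. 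Hence $[\tilde r(C)]=\pm 2\mu\neq 0$ in $H_1(T;\Z)$: the loop does not lift to a closed curve in $E$, and Whitney's formula (valid for closed immersed curves in the plane) cannot be invoked; for essential curves on $T$ the parity relation between turning number and number of double points fails in general (a closed geodesic has no double point and turning number $0$). Fortunately your conclusion survives by a much shorter argument that you already have in hand: since $\mathrm{d}p$ is nonzero on every tangent vector of $\tilde r$, the Gauss map of the immersion omits the directions $\pm\la$, hence has degree $0$; so the turning number is $0$, the winding number of $\psi$ is $0$, and the fourth root exists. With that repair the existence proof is correct.

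For comparison, the paper proceeds quite differently and more softly: it identifies $C$ with $\R/2\Z$ so that $\si$ is $[p]\mapsto[-p]$ with the two fixed points mapping to $x_o$, defines $g$ with $g^4=\T^2$ on one open arc, extends it to the other arc by $\si_\delta$-equivariance (using $\si^*\T^2=\T^2$), and checks via the explicit formula (\ref{eq:torsion_8}) that the left and right limits at the two fixed points agree, so $g$ closes up; invariance is built in and uniqueness is the same constant-fourth-root-of-unity remark you make. Your (repaired) route is a global obstruction computation that does not use the symmetry for existence and yields smoothness directly, at the price of the explicit geometry of the curve $X$; the paper's route avoids any turning-number discussion but constructs the invariant section by hand.
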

\begin{proof} 
Let us prove the existence of a non-vanishing section $g$ satisfying $g^4 = \T^2$. 
We identify smoothly $C$ with $\R / 2 \Z   $ in such a way that the involution $ \si $ is the map sending $[p] \in C$ to $[-p] \in C$. The two fixed points $[0]$ and $[1]$ are sent by $r$ to $\pi ( \la / 4)$. Let $C_+ = \{ [p]; p \in ]0,1[ \}$ and $C_-  = \{ [p]; p \in ]1,2[ \}$. 

Define $g$ over $C_+$ in such a way that $g^4 = \T^2$. Since $\si^* \T^2 = \T^2$, we can extend $g$ over $C_+ \cup  C_-$ in such a way that $g$ is $\si_\delta$ -invariant and $g^4 = \T^2$ is still satisfied. Because of equation (\ref{eq:torsion_8}), $g$ has left and right limits at $[0]$ and $[1]$. By symmetry reason, the left and right limits are the same, so that $g$ extends continuously to $C$. 

\end{proof}

\subsubsection{The operator $Q$}

Let $I$ be the endomorphism of $\Hilb_k$ given by $I \Psi(x) = \Psi( -x)$ where we view the elements of $\Hilb_k$ as $R$-invariant section over $E$. So $\Hilb_k^{\alt} = \ker ( I + \id_{\Hilb_k}) $.  

\begin{lem} \label{lem:symmetrie_operateur}
The operator $Q$ defined in (\ref{eq:def_Q}) commutes with $T^*_{\la/2}$ and anticommutes with $I$. 
\end{lem}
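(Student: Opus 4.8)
The plan is to reduce the statement to the pairwise commutation relations of $M$, $L$, $T^*_{\la/2}$ and $I$, and then to read off both assertions from the explicit shape of $Q$ in (\ref{eq:def_Q}), exploiting that \emph{only even powers of $M$} occur in $Q$ and that its three summands are visibly permuted, up to sign, by $M\mapsto M^{-1}$, $L\mapsto L^{-1}$.

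First I would record the elementary relations on the orthonormal basis $(\Psi_\ell)_{\ell\in\Z/2k\Z}$ of $\Hilb_k$. From (\ref{eq:def_base_hilbk}) we have $M\Psi_\ell=e^{i\ell\pi/k}\Psi_\ell$ and $L\Psi_\ell=\Psi_{\ell-1}$; we already noted $T^*_{\la/2}\Psi_\ell=\Psi_{\ell+k}$; and the $\Z_2\subset R\rtimes\Z_2$ action recalled in Section \ref{sec:knot-state} gives $I\Psi_\ell=\pm\Psi_{-\ell}$ with a global sign (which is irrelevant below, since conjugation by $I$ is insensitive to rescaling $I$, and $I^{-1}=I$ because $I^2=\id$). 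A one-line computation on basis vectors then yields
\[
\begin{gathered}
T^*_{\la/2}\,L = L\,T^*_{\la/2}, \qquad T^*_{\la/2}\,M = -\,M\,T^*_{\la/2},\\
I\,L = L^{-1}\,I, \qquad I\,M = M^{-1}\,I.
\end{gathered}
\]
The minus sign in the second relation is the only point where the normalization of $M$ matters: it comes out of $e^{i(\ell-k)\pi/k}=-e^{i\ell\pi/k}$, or intrinsically from the fact that on $L^k$ the Heisenberg commutator of the actions of $(\la/2,1)$ and $(\mu/2k,1)$ equals $e^{ik\om(\la/2,\mu/2k)}=e^{i\pi(\la\cdot\mu)}=-1$. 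Both $T^*_{\la/2}$ and $I$ are $\C$-linear, hence fix the scalar $q=e^{i\pi/k}$.

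For the first assertion I would observe that $Q$ is a polynomial in $M^{\pm2}$, $M^{\pm4}$, $L^{\pm1}$ with coefficients in $\C[q^{\pm1}]$. Conjugation by $T^*_{\la/2}$ is an algebra automorphism fixing $q$, fixing $L^{\pm1}$, and sending each even power $M^{2m}$ to $(-M)^{2m}=M^{2m}$; it therefore fixes $Q$, i.e.\ $Q$ commutes with $T^*_{\la/2}$. For the second assertion, conjugation by $I$ is an algebra automorphism fixing $q$ and sending $M^{\pm2}\mapsto M^{\mp2}$, $M^{\pm4}\mapsto M^{\mp4}$, $L^{\pm1}\mapsto L^{\mp1}$. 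Applying it term by term to (\ref{eq:def_Q}): the first summand $(q^{-1}M^2-qM^{-2})L$ becomes $(q^{-1}M^{-2}-qM^2)L^{-1}=-(qM^2-q^{-1}M^{-2})L^{-1}$, which is minus the second summand; symmetrically the second summand becomes minus the first; and the third summand becomes $(M^{-2}-M^2)(-M^{-4}-M^4+M^{-2}+M^2+q^2+q^{-2})$, which equals minus itself because its second factor is invariant under $M\mapsto M^{-1}$. Hence $IQI^{-1}=-Q$, i.e.\ $I$ anticommutes with $Q$.

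I expect no real obstacle: the only delicate point is fixing the sign in $T^*_{\la/2}M=-M\,T^*_{\la/2}$ (equivalently, that translation by $\la/2$ is a genuine "half-lattice" shift on $L^k$), and once that is in place the argument is pure bookkeeping using the even-$M$-power structure and the $M\leftrightarrow M^{-1}$, $L\leftrightarrow L^{-1}$ symmetry of the terms of $Q$.
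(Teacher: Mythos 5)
Your proof is correct and takes essentially the same route as the paper: it establishes the relations $T^*_{\la/2}M=-MT^*_{\la/2}$, $T^*_{\la/2}L=LT^*_{\la/2}$, $IM=M^{-1}I$, $IL=L^{-1}I$, then uses that $Q$ contains only even powers of $M$ to get the commutation, and the substitution $M\mapsto M^{-1}$, $L\mapsto L^{-1}$ applied term by term to (\ref{eq:def_Q}) to get $IQI^{-1}=-Q$. The only difference is that you spell out the sign bookkeeping (via the basis $(\Psi_\ell)$ or the Heisenberg commutator on $L^k$) that the paper leaves implicit.
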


\begin{proof} 
Since $M$ and $L$ are respectively the pull-back by the actions of the elements $(\mu/2k , 1)$ and $( -\la / 2k ,1)$ of the Heisenberg group, we have that
$$  T^*_{\la/2} M = - M T^*_{\la/2}, \qquad T^*_{\la/2} L = L T^*_{\la/2}$$ 
Since  $Q$ is a polynomial expression in the variables $M^2$ and $L$, it commutes with $T^*_{\la/2}$. That $Q$ anticommutes with $I$ follows from the relations $M I = I M^{-1}$ and $L I = I L^{-1}$.   
\end{proof}

\subsubsection{Symmetric microlocal solution of $Q\Psi_k = 0 $}

Let us now work on the torus $T = E/R$. The action of $\Ga'$ on $E$ descends to an action of $\Ga'/R \simeq \Z_2 \times \Z_2$ on $T$. The two generators act on $T$ by $[x] \rightarrow [-x]$ and $[x] \rightarrow [x + \la/2 ]$. 

Observe that $\Ga'/R$ acts simply transitively on the family $(U_1, U_2, U_3, U_4)$. Let  $V = U_1 \cup U_2 \cup U_3 \cup U_4$.  We deduce from Theorem \ref{theo:lagr-micr-solut} and Lemma \ref{lem:symmetrie_operateur} that the module of $\Ga'/R$-invariant microlocal solutions of $Q \Psi_k = 0 $ on $U$ modulo $O(k^{-\infty})$ is a one-dimensional ${\mathcal{R}}$-module. Furthermore this module has a generator $(\Psi_k \in \Ci(V, L^k \otimes \delta),\; k \in \Z_{>0})$ of the form 
\begin{gather} \label{eq:Psi_k}
 \Psi_k =  \frac{k^{3/4}}{4\pi^{3/4}} F^k \tilde g ( \cdot, k)  
\end{gather}
where $F$ and $g$ satisfies the same assumption as in Theorem \ref{theo:lagr-micr-solut}. 
By Lemma \ref{sec:racine-torsion} and Lemma \ref{sec:symmetry-CS}, we can also  assume that for any irreducible representation $\rho \in \mo^{\ir} ( E_8)$ and $x \in E$ such that $\pi (x) = \rho$, we have that
$$ F( [x]) = \CS(\rho), \qquad  \tilde g ( x, k ) = g ( \rho) +O(k^{-1}) 
$$  
where $g$ satisfies the same assumption as in Lemma \ref{sec:racine-torsion}. Finally, we have that 
\begin{gather}\label{eq:semi_classic_etat}
 Z_k ( E_8) = \la_k \Psi_k + O(k^{-\infty})  \quad \text{ on } V
\end{gather} 
where $(\la_k)$ is an admissible sequence of complex numbers.

\subsection{End of the proof}

In this part, we show that the sequence $(\la_k)$ in (\ref{eq:semi_classic_etat}) satisfies
\begin{gather} \label{eq:estim_lak}
 \la_k = e^{im \frac{\pi}{4}} + O( k^{-1})
\end{gather}
for some integer $m$.  To prove this we use the fact that the surgery of index 1 on the figure eight knot is homeomorphic to the Brieskorn sphere $\Sigma ( 2,3,7)$. Hikami proved in \cite{hikami2} that the Brieskorn spheres satisfy the Witten asymptotic conjecture. In the following theorem we present the leading order term for $\Si ( 2, 3, 7)$.  

\begin{theo}\cite{hikami2} \label{theo:Brieskorn}
The WRT invariant of the Brieskorn sphere $\Sigma(2,3,7)$ satisfies
$$ Z_k ( \Sigma(2,3,7)) = \frac{e^{i m \frac{\pi}{4}}}{2}   \sum_{i=1,2} \CS (\rho_i)^k  \T ( \rho_i) ^{1/2}   + O(k^{-1}) $$
for some integer $m$ with $\mo ^{\ir} ( \Sigma(2,3,7)) = \{ \rho_1, \rho_2 \}$. Furthermore $$ \T ( \rho_1) = \frac{2 ^{3/2}}{7^{1/2}} \sin \bigl( \frac{2\pi}{7} \bigr), \quad \CS( \rho_1) = e^{-i \frac{25\pi}{84}} $$
and 
$$ \T ( \rho_2) = \frac{2 ^{3/2}}{7^{1/2}} \sin \bigl( \frac{3\pi}{7} \bigr) , \quad \CS( \rho_2) = e^{i \frac{47 \pi}{84}}  .$$  
\end{theo}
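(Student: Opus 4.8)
The plan is to deduce Theorem~\ref{theo:Brieskorn} from the explicit evaluation of the Witten--Reshetikhin--Turaev invariant of Seifert fibred homology spheres, so that the only real work is matching normalisations and recognising the relevant gauge-theoretic data. First I would recall that $\Sigma(2,3,7)$, being the $\pm 1$-surgery on the figure eight knot, is a Seifert fibred integral homology sphere; hence its only central representation is the trivial one and it is isolated, while $\mo^{\ir}(\Sigma(2,3,7))$ is finite and a direct enumeration (parametrising irreducible flat connections by admissible rotation numbers $(\ell_1,\ell_2,\ell_3)$, $0<\ell_j<p_j$, subject to the Seifert relations) yields exactly the two connections $\rho_1$ and $\rho_2$.

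Next I would quote Hikami's computation \cite{hikami2}, which builds on the Lawrence--Zagier analysis \cite{lz} of the Poincar\'e sphere and expresses $Z_k(\Sigma(2,3,7))$ --- in a normalisation compatible with the TQFT of \cite{bhmv} used here --- as an explicit elementary factor times the radial limit at $q=e^{i\pi/k}$ of the Eichler integral of a weight $3/2$ vector valued modular form attached to $\Sigma(2,3,7)$, equivalently as an explicit finite exponential (Gauss) sum. The quantum modular transformation of this Eichler integral then produces, in the large $k$ limit, a finite sum of contributions of the shape $e^{2i\pi k\, c_j}\bigl(\gamma_j + O(k^{-1})\bigr)$ indexed by the irreducible connections, together with a perturbative tail of order $k^{-3/2}$ coming from the trivial connection. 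Since the latter is $O(k^{-1})$ it is absorbed by the remainder, which is why only $\rho_1,\rho_2$ survive at leading order, each with $k$-power equal to $k^{0}$, in accordance with $n(\rho)=0$ for irreducible $\rho$.

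It remains to identify the exponents $c_j$ and the coefficients $\gamma_j$. The phases $e^{2i\pi c_j}$ are precisely the Chern--Simons invariants $\CS(\rho_j)$: evaluating the standard closed formula for the Chern--Simons invariant of a flat connection on a Seifert homology sphere (see \cite{fg}, or directly \cite{hikami2}) at the two rotation-number triples gives $c_1=-25/168$ and $c_2=47/168$, that is $\CS(\rho_1)=e^{-25 i\pi/84}$ and $\CS(\rho_2)=e^{47 i\pi/84}$. For the coefficients, I would invoke the identity --- established by the same modular analysis and consistent with the Witten asymptotic conjecture --- that $|\gamma_j|=\tfrac{1}{2}\,\T(\rho_j)^{1/2}$, where $\T$ denotes the adjoint Reidemeister torsion normalised as in Subsection~\ref{sec:examples}; specialising the Seifert torsion formula to $\Sigma(2,3,7)$ then gives $\T(\rho_1)=\frac{2^{3/2}}{7^{1/2}}\sin(2\pi/7)$ and $\T(\rho_2)=\frac{2^{3/2}}{7^{1/2}}\sin(3\pi/7)$. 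Finally, since $Z_k$ depends on the $2$-framing only through a power of $\tau_k=e^{3i\pi/4-3i\pi/2k}$ and $\tau_k=e^{3i\pi/4}+O(k^{-1})$, the common argument of the two leading terms is pinned down only modulo eighth roots of unity, and we record it as $e^{im\pi/4}$ for an undetermined integer $m$.

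The main obstacle is not conceptual but the careful dictionary between conventions: one must line up Hikami's normalisation of the WRT invariant --- his level shift $k\leftrightarrow k+2$, his $2$-framing, his overall scalar factor and the precise primitive root of unity entering his Gauss sums --- with the invariant $Z_k$ of \cite{bhmv} adopted here, so that the leading power comes out as exactly $k^{0}$ and the numerical constant as exactly $\tfrac{1}{2}$; one must also check that the fourth-root-of-unity ambiguity in the choice of $\sqrt{\T(\rho_j)}$ is absorbed uniformly into the single integer $m$ rather than producing incompatible phases for $j=1$ and $j=2$. Once this bookkeeping is settled, the statement follows by reading off the leading term of the large $k$ expansion.
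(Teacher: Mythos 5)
The paper does not prove this statement at all: it is quoted verbatim from Hikami \cite{hikami2} (with \cite{lz}-style modularity behind it), and your proposal likewise reduces everything to Hikami's computation plus a normalisation/convention dictionary, so it is essentially the same approach. Your added detail (enumeration of the two irreducible flat connections, the Chern--Simons values $-25/168$ and $47/168$, the torsion specialisation, and the flagged check that both contributions carry the same eighth root of unity) is consistent with the cited result.
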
  

Let $N$ be a solid torus with boundary $\Si$ such that $\mu + \la$ vanishes in $H_1 (N)$. Then$$ \langle Z_k (E_8) , Z_k (N) \rangle = Z_k ( \Sigma(2,3,7)) $$
Recall that the microsupport of $Z_k (N)$ is the circle  $ \Ga = \{ [s ( \la + \mu ) ]/ s\in \R \}$ whereas the microsupport of $Z_k(E^8)$ is contained in $\{ p =0 \} \cup X$ where $X$ is the characteristic set (\ref{eq:def_X}). Hence, for any closed neighborhood $K$ of $\Ga \cap ( \{ p = 0 \} \cup X)$ we have 
$$ \langle Z_k (E_8) , Z_k (N) \rangle = \int_{K} \bigl( Z_k (E_8)   , Z_k (N) \bigr)_{L^k \otimes \delta}  (x) \;  \mu_T ( x)  +O(k^{-\infty}) $$
where $( \cdot, \cdot  )_{L^k \otimes \delta}$ denote the pointwise hermitian product of $L^k \otimes \delta$ at $x \in T$ and $\mu_T$ is the Liouville measure of $T$. 
 
Let  $K_1$ and $K_2$ two disjoint compact sets neighborhood of $ \Ga \cap \{ p = 0\} $ and $\Ga \cap X$ respectively. Let  $I_1$ and $I_2$ the integrals of $( Z_k (E_8) , Z_k (N))_{L^k \otimes \delta}   \mu_T $ over $K_1$ and $K_2$ respectively. Using that $Z_k (N)$ and $Z_k (E^8)$ are Lagrangian states on a neighborhood of $\{ 0_T \} = \Ga \cap \{ p = 0 \}$, we can estimate $I_1$ as in the proof of Theorem \ref{theo:witt-conj}. We deduce that
$$  I_1   = O(k^{-3/2} ) $$ 
Assume that $K_2$ is contained in the reunion $U$ of the open sets $U_i$ defined in (\ref{eq:rectangle_Ui}). Because of (\ref{eq:semi_classic_etat}), we have
$$ I_2 =    \la_k \int_{K_2}  \bigl( \Psi_k   , Z_k (N) \bigr)_{L^k \otimes \delta}  \; \mu_T + O(k^{-\infty})$$ 

\begin{lem} We have that 
$$ \int_{K_2} \bigl( \Psi_k   , Z_k (N) \bigr)_{L^k \otimes \delta}  \;  \mu_T   = e^{i m' \frac{\pi}{4}}  Z_k ( \Sigma(2,3,7)) + O(k^{-1} ) $$
for some integer $m'$.  
\end{lem}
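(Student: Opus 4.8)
The plan is to evaluate the integral on the left directly, by applying the stationary‐phase pairing of Lagrangian states to $\Psi_k$ and $Z_k(N)$ exactly as in the proof of Theorem~\ref{theo:witt-conj}, and then to recognise the outcome as $Z_k(\Sigma(2,3,7))$ by invoking Hikami's Theorem~\ref{theo:Brieskorn}. First I would locate $\Ga\cap X\cap U$, where $U=U_0\cup U_1\cup U_2\cup U_3$. Substituting $p=q$ in the defining equation (\ref{eq:def_X}) of $X$ yields a polynomial equation in $\cos(2\pi p)$ whose only spurious root is $\cos(2\pi p)=-1$, corresponding to the central point $\pi(\tfrac12(\mu+\la))$; this point lies on $\{q\in\tfrac14\Z\}$, hence outside $U$, and by the regularity of every irreducible representation of $E_8$ (Proposition 4.19 of \cite{porti}) it is not the restriction of any representation of $E_8$, hence not in the microsupport of $Z_k(E_8)$. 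The remaining roots give, modulo the $\Z_2$‐action on $T$, exactly the two points $x_1=r(\rho_1)$ and $x_2=r(\rho_2)$, where $\rho_1,\rho_2$ are the two irreducible representations of $\Sigma(2,3,7)$. Since $(1,1)$‐surgery on $E_8$ satisfies hypotheses (H1)--(H2) — indeed $\mo(\Sigma(2,3,7))$ is the trivial representation together with $\rho_1,\rho_2$, which are regular with $H^1_{\rho_i}(\Sigma(2,3,7))=0$ — the curves $r(\mo^{\irr}(E_8))$ and $\Ga$ are transverse at $x_1,x_2$. A partition of unity then reduces $\int_{K_2}(\Psi_k,Z_k(N))_{L^k\otimes\delta}\,\mu_T$ to a sum over small balls $B_i$ around (the lifts of) $x_1,x_2$, up to $O(k^{-\infty})$.

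Next I would do the pairing on each $B_i$. There the family $\Psi_k=\frac{k^{3/4}}{4\pi^{3/4}}F^k\tilde g(\cdot,k)$ of (\ref{eq:Psi_k}) is a Lagrangian state supported by $r(\mo^{\irr}(E_8))$, with phase $\CS(\rho_i)$ and leading symbol the section $g$ of Lemma~\ref{sec:racine-torsion} (for which $g^4=\T^2$), while by Theorem 3.3 of \cite{LJ1} and (\ref{eq:tore_solide_2}) the state $Z_k(N)$ is a Lagrangian state supported by $\Ga$, with phase the Chern-Simons section and leading symbol a square root of $\T(r_N(\rho_i))$. Applying the pairing formula (Proposition 6.1 of \cite{LJ1}) as in the proof of Theorem~\ref{theo:witt-conj}: the phases combine by the gluing formula (\ref{eq:gluing_Chern_Simons}) into $\CS(\rho_i)^k$; the half-form symbols pair, via Proposition~\ref{prop:gluing-formula-Rei} and the identity $\om=(2\pi)^{-1}\T(r_\Si(\rho_i))$, into a square root of $\T(\rho_i)$; and the numerical factors multiply to $2^{-1}$ (the factor $2$ being the degree of $E^s/R\to\mo^s(\Si)$), exactly the constant found for irreducible representations in Theorem~\ref{theo:witt-conj}. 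With the sign convention of Theorem~\ref{theo:Brieskorn}, so that $\T(\rho_i)>0$, this gives
$$\int_{B_i}(\Psi_k,Z_k(N))_{L^k\otimes\delta}\,\mu_T=e^{i\theta_i}\,\tfrac12\,\T(\rho_i)^{1/2}\,\CS(\rho_i)^k+O(k^{-1}),$$
where $\T(\rho_i)^{1/2}$ denotes the positive root and $e^{i\theta_i}$ is an eighth root of unity collecting the residual sign and fourth-root ambiguities of the two square-root symbols.

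Finally I would show that $e^{i\theta_1}=e^{i\theta_2}$. Since $g$ is a continuous section of $\delta$ over the connected circle $\mo^{\irr}(E_8)$ with $g^4=\T^2$, the fourth root relating $g$ to the canonical square root of $\T$ is locally constant, hence constant; likewise the sign of the symbol of $Z_k(N)$ along $\Ga$ is constant; and the half-form pairing of Proposition 6.1 of \cite{LJ1} is canonical. Therefore $e^{i\theta_1}=e^{i\theta_2}=:e^{i\nu\pi/4}$ is a single eighth root of unity, and
$$\int_{K_2}(\Psi_k,Z_k(N))_{L^k\otimes\delta}\,\mu_T=e^{i\nu\pi/4}\cdot\tfrac12\sum_{i=1,2}\T(\rho_i)^{1/2}\CS(\rho_i)^k+O(k^{-1}).$$
As Theorem~\ref{theo:Brieskorn} reads $Z_k(\Sigma(2,3,7))=e^{im\pi/4}\cdot\tfrac12\sum_{i=1,2}\CS(\rho_i)^k\T(\rho_i)^{1/2}+O(k^{-1})$ for some integer $m$ (with the same positive roots), the lemma follows with $m'=\nu-m$.

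The main obstacle is this last step: the numerical factor $\tfrac12\T(\rho_i)^{1/2}$ is produced by a computation identical to the one in Theorem~\ref{theo:witt-conj}, but there each $m(\rho)$ was merely "an integer", whereas the statement here forces a \emph{single} eighth root of unity at $\rho_1$ and at $\rho_2$. Establishing that rests on the fact that the Reidemeister-torsion square root $g$ is globally defined over the connected character circle and that $Z_k(N)$ carries a globally defined symbol over $\Ga$, so that all remaining half-integer and quarter-integer phase ambiguities amalgamate into one $k$-independent, $i$-independent eighth root. A secondary but genuine point, handled in the localisation step, is to discard the spurious central intersection point of $\Ga$ with the algebraic curve $X$.
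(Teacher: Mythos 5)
Your overall strategy coincides with the paper's: localize at the two transverse intersection points of $\Ga$ with the characteristic curve, apply the half-form pairing formula exactly as in the proof of Theorem \ref{theo:witt-conj} to get a contribution $2^{-1}e^{i\theta_i}\T(\rho_i)^{1/2}\CS(\rho_i)^k+O(k^{-1})$ at each point, and then invoke Theorem \ref{theo:Brieskorn}. The genuine gap is precisely the step you single out as the main obstacle: the equality $e^{i\theta_1}=e^{i\theta_2}$. Your justification --- that the fourth root relating $g$ to ``the canonical square root of $\T$'' is locally constant hence constant, that the symbol of $Z_k(N)$ has constant sign, and that the pairing of Proposition 6.1 of \cite{LJ1} is canonical --- does not establish it. There is no canonical square root of $\T$ inside $\delta$ (that is exactly what Lemma \ref{sec:racine-torsion} constructs, and only up to a power of $i$), and canonicity of the pairing does not force its phase to agree at two distinct points: the pairing depends on the pair of transverse lines, and the tangent line $T_{r(x)}r(\mo^{\ir}(E_8))$ is different at $x_1$ and $x_2$, so the value of the pairing genuinely varies from point to point.

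What is needed, and what the paper does, is a continuity-plus-rigidity argument along the whole circle $\mo^{\ir}(E_8)$: write the leading coefficient at $x$ as the half-form pairing $a(x)=\langle g(x),\sin(2\pi q(x))\Om_{\la+\mu}\rangle$ relative to the lines $T_{r(x)}r(\mo^{\ir}(E_8))$ and $(\la+\mu)\R$, check that this is defined and non-vanishing for \emph{every} $x$ on the circle (transversality of the tangent line with $(\la+\mu)\R$ everywhere, and a continuous determination of $q(x)$ with $1/6<q(x)<1/3$ so that the sine factor never vanishes), and then observe from the squared-pairing formula (\ref{eq:carre_pair}) that $a(x)^4<0$, so the phase of $a(x)$ lies in a discrete set; continuity and connectedness of the circle then force the phase to be constant, whence $\theta_1=\theta_2$. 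Your proposal names the right ingredient (global definedness of $g$ over the connected character circle) but omits the interpolation along the circle, the two non-degeneracy checks that make the pairing well defined and non-zero there, and the ``fourth power negative'' discreteness step that converts continuity into constancy; as written, the key identity is asserted rather than proved.
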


\begin{proof} Since $\Ga$ intersects transversally $X$, we can argue exactly as in the proof of Theorem \ref{theo:witt-conj}. We deduce that 
$$ \int_{K_2} \bigl( \Psi_k   , Z_k (N) \bigr)_{L^k \otimes \delta}  \;  \mu_T = 2^{-1}  \sum_{i=1,2} e^{i m_i \frac{\pi}{4}}  \CS (\rho_i)^k  \T ( \rho_i) ^{1/2}   + O(k^{-1}) $$
where $\rho_1$ and $\rho_2$ are the two irreducible representations of $\Si ( 2, 3, 7)$ as in Theorem \ref{theo:Brieskorn}. To conclude, we have to show that the two integers $m_1$ and $m_2$ are equal. Let us recall the half-form pairing. Given two transversal lines $F_1$ and $F_2$ of $E$, we define a sesquilinear map 
$$ \delta \times  \delta \rightarrow \C, \qquad \al_1 , \al_2 \rightarrow \langle \al_1, \al_2 \rangle_{F_1, F_2}$$
Here we denote by $\delta$ the complex line and not the bundle, its square is identified with the canonical line of $(E,j)$.  The square of this pairing is given by 
\begin{gather} \label{eq:carre_pair}
 \langle \al_1, \al_2 \rangle_{F_1, F_2}^2 = i \frac{ \al_1^2( X_1)  \al_2 ^2 (X_2) }{ \om (X_1, X_2) } 
\end{gather}
where $X_i $ is any vector non-vanishing vector in $F_i$ for $i = 1,2$. The way the square root is determined is explained in \cite{l1}.  Here we will only use the fact that the pairing depends continuously on $F_1$ and $F_2$. 
As an application of stationary phase lemma, we obtain  
$$ \int_{K_2} \bigl( \Psi_k   , Z_k (N) \bigr)_{L^k \otimes \delta}  \;  \mu_T \sim 2^{-1}  \sum_{i=1,2} \CS (\rho_i)^k  a_i $$
where $a_i$ is the pairing 
\begin{gather}\label{eq:pa}
a_i = \langle g ( x_i ), \sin ( 2 \pi q( x_i) ) \Om _{\la + \mu } \rangle _{ T_{r(x_i) } r ( \mo (E_K)) , (\la + \mu ) \R}
\end{gather}
Here $g$ is a section of $ \delta_C \rightarrow C $ satisfying the assumption of Lemma \ref{sec:racine-torsion}. The representations $x_1,x_2\in\mo ^{\ir} (E_K)$ have restrictions $\rho_1$ and $\rho_2$ respectively. $q(x_i)$ is the second coordinate of $r(x_i)$ in the base $\mu, \la$. Finally $\Om_{\la + \mu } \in \delta$ satisfies $\Om_{\la +\mu}^2 (\la +\mu) =1$.    

Observe that for any $x \in \mo ^{\ir} (E_K)$, the tangent space $ T_{r(x) } r ( \mo (E_K))$ intersects transversally the line directed by $\mu + \la$. Furthermore one can choose the determination of $q(x)$ such that it depends continuously on $x \in \mo ^{\ir}( E_K)$ and satisfies $1/6 < q(x) < 1/3$.  Since the section $g$ is continuous, the right hand side of (\ref{eq:pa}) depends continuously on $x \in \mo ^{\ir} (E_K)$ and does not vanish. Its fourth power being negative by Equation (\ref{eq:carre_pair}), its phase is constant. 
\end{proof}

Collecting together the previous estimates we obtain that 
$$ \bigl( 1 - \la_k  e^{i m ' \frac{ \pi}{4}} \bigr) Z_k ( \Si ( 2, 3, 7)) = O(k^{-1})$$
Using that $\T ( \rho_1) \neq \T ( \rho_2)$, we deduce from Theorem \ref{theo:Brieskorn} that 
$$  \bigl| Z_k (  ( \Si ( 2, 3, 7)) \Bigr| \geqslant \ep 
$$  
for some positive $\ep$ which does not depend on $k$. This implies (\ref{eq:estim_lak}).

\subsection{Regular slopes of the figure eight knot}
By Theorem 4.1, our conjectures on the knot state hold for the figure eight knot. Using Theorem 3.5, we can conclude that the Witten conjecture holds for the Dehn filling with parameters $p,q$ on the figure eight knot as soon as the non-degeneracy conditions H'1 and H'2 are satisfied. We collect in the following proposition the state of our knowledge on that question.
\begin{prop}\label{prop:reg-slope}
Let $p,q$ be two positive coprime integers and consider the hypothesis H'1 and H'2 concerning the Dehn fillings of the figure eight knot with parameters $p,q$. 
\begin{itemize}
\item[-] H'1 is satisfied if and only if $p\ne 0\mod 4$.
\item[-] H'2 is satisfied if $p/q<2\sqrt{5}$.
\item[-] H'2 is satisfied if and only if the polynomial $$\Phi_{p,q}=X^{2p}-X^{p+4q}+X^{p+2q}+2X^p+X^{p-2q}-X^{p-4q}+1$$ has no simple roots on the unit circle except $-1$ ($p=1\mod 2$) and $\pm i$ ($p=0\mod 4$).
\item[-] H'2 is satisfied if $0<2\sqrt{5}q<p\le 200$.
\item[-] Let $\ell$ be an odd prime divisor of $p$ and $\xi$ be a root of $\xi^2-\xi/2+1$ in $\mathbb{F}_{\ell^2}$. If $2^{8q}\ne \xi^{\pm p}$, then $H'2$ holds.
\end{itemize}
\end{prop}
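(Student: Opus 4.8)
The plan is to reduce H'1 and H'2 to the geometry of the intersection of the line $I_{p/q}=\pi(\R(p\mu+q\la))$ with the character curve of the figure eight knot inside the torus $T=E/R$, and then to the roots of $\Phi_{p,q}$ on the unit circle. I will use freely: the description of $\mo(E_8)$ recalled at the start of Section~\ref{sec:huit} (so $r(\mo^{\irr}(E_8))=X\setminus\{\pi(\mu/2)\}$ is an immersed curve with a single double point $x_o=\pi(\la/4)$, and $r(\mo^{\ab}(E_8))$ is a simple arc meeting $r(\mo^{\irr}(E_8))$ only at $x_o$); the translation from the Lemma preceding Theorem~\ref{theo:witt-conj} (so H'1 is injectivity of $\mo(M)\to\mo(L)\simeq I_{p/q}$, and H'2 is $H^1_\rho(M)=0$ for all non-central $\rho\in\mo(M)$); and the fact, extracted from that Lemma's proof, that for non-central $\rho$ one has $H^1_\rho(M)=0$ exactly when $r(\mo(E_8))$ meets $I_{p/q}$ transversally at $r(\rho)$. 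For the first bullet: since $r$ is injective on $\mo^{\ab}(E_8)$, is injective on $\mo^{\irr}(E_8)$ away from the two preimages of $x_o$, and $r(\mo^{\ab}(E_8))\cap r(\mo^{\irr}(E_8))=\{x_o\}$, the map $\mo(M)\to\mo(L)$ can fail to be injective only over $x_o$, and it does so precisely when $x_o\in I_{p/q}$ (then $r^{-1}(x_o)$ yields one abelian and two irreducible representations of $M$). As $x_o=\pi(\la/4)$, the condition $x_o\in I_{p/q}$ is the solvability of $4aq\equiv\pm p\pmod{4p}$ in $a$, which, because $\gcd(p,q)=1$, holds if and only if $4\mid p$; this is H'1 $\Leftrightarrow$ $p\not\equiv0\bmod4$.

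For the other bullets, parametrise $I_{p/q}$ by $t\mapsto\pi(t(p\mu+q\la))$, put $z=e^{2\pi it}$, and substitute $u=tp$, $v=tq$ into the function $h(u,v)=\cos(2\pi u)-\cos(8\pi v)+\cos(4\pi v)+1$ cutting out $X$: this gives $2h(tp,tq)=z^{-p}\Phi_{p,q}(z)$. Consequently points of $I_{p/q}\cap X$ correspond to roots of $\Phi_{p,q}$ on the unit circle, and such a root is simple if and only if the intersection there is transverse — indeed the obstruction to transversality at such a point is the derivative of $h$ along $I_{p/q}$, namely $\nabla h\cdot(p,q)$, which vanishes at a root $z_0$ of $\Phi_{p,q}$ exactly when $\Phi_{p,q}'(z_0)=0$. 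It remains to identify the roots lying over points outside the scope of H'2: $\Phi_{p,q}(1)=4\neq0$ always; $\Phi_{p,q}(-1)=0\Leftrightarrow p$ odd, the root $-1$ lying over the central representation $\pi(\tfrac{1}{2}(p\mu+q\la))$; and $\Phi_{p,q}(\pm i)=0\Leftrightarrow 4\mid p$, the roots $\pm i$ lying over $x_o$, at which the relevant test is transversality of $I_{p/q}$ to each of the two branches of $X$ (the branch directions being read off the Hessian of $h$ at $x_o$, where $\nabla h$ vanishes), equivalently that $\pm i$ be a root of $\Phi_{p,q}$ of multiplicity exactly two. Hence the third bullet: H'2 holds iff $\Phi_{p,q}$ has no multiple root on the unit circle apart from these distinguished ones.

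For the second bullet it then suffices to rule out non-transverse intersections over non-degenerate points. Along $I_{p/q}$ set $c=\cos(4\pi qt)$; at an intersection point one has $\cos(2\pi pt)=2c^2-c-2$ (which forces $c\in[-1,-\tfrac{1}{2}]$), and non-transversality imposes in addition $p\sin(2\pi pt)=2q\sin(4\pi qt)(4c-1)$. Squaring, adding, and dividing by $1-c^2>0$, a non-transverse intersection over a point with $c\in(-1,-\tfrac{1}{2})$ would force $P(c):=4\tfrac{q^2}{p^2}(1-4c)^2-(2c-3)(2c+1)=0$. I would then prove $P>0$ on $[-1,-\tfrac{1}{2}]$ whenever $q^2/p^2>\tfrac{1}{20}$, that is $p/q<2\sqrt5$: one computes $P(-1)=100\tfrac{q^2}{p^2}-5>0$ and $P(-\tfrac{1}{2})=36\tfrac{q^2}{p^2}>0$, and a short case analysis on the location of the vertex of the parabola $P$ (with thresholds $p/q=2\sqrt2,\ 2\sqrt3,\ \sqrt{40/3},\ 4$) shows its minimum over $[-1,-\tfrac{1}{2}]$ is one of these two endpoint values, or else equals $\tfrac{60q^2/p^2-4}{16q^2/p^2-1}>0$ in the only range where the vertex falls inside. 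Since the degenerate values $c=\pm1$ occur on $I_{p/q}\cap X$ only when $4\mid p$ (the node $x_o$), those cases are handled separately and turn out fine; so H'2 holds when $p/q<2\sqrt5$.

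For the last bullet I would reduce modulo a prime $\lambda$ above $\ell$. A multiple root $z_0$ of $\Phi_{p,q}$ on the unit circle other than $-1,\pm i$ satisfies $g(z_0)=0$ and $z_0g'(z_0)=0$ with $g(z)=z^{-p}\Phi_{p,q}(z)$; writing $A=z_0^p$, $B=z_0^{2q}$, $s=B+B^{-1}$, these read $A+A^{-1}=s^2-s-4$ and $p(A-A^{-1})+2q(B-B^{-1})(1-2s)=0$. Reducing mod $\lambda$ and using $\ell\mid p$, $\ell\nmid q$, the second relation forces $\bar B=\pm1$ or $2\bar B^2-\bar B+2=0$, i.e. $\bar B=\xi$; substituting into the first pins $\bar A$ down to a short explicit list, and then $\bar A^{2q}=\bar B^{\,p}$ yields $2^{8q}=\xi^{\pm p}$ in $\mathbb F_{\ell^2}$ (the branches $\bar B=\pm1$ collapsing to $\bar z_0=-1$ with $p$ odd, i.e. the allowed root); the contrapositive is the fifth bullet. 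The fourth bullet then follows from the fifth applied at the odd prime divisors of $p$ for $5\le p\le200$, together with a direct check of the finitely many remaining pairs — those with $p$ a power of two, and those with $2^{8q}=\xi^{\pm p}$ for every odd prime divisor of $p$ — for instance by computing $\gcd(\Phi_{p,q},\Phi_{p,q}')$ and inspecting its roots on the unit circle. I expect the fifth bullet to be the main obstacle: making the reduction-mod-$\ell$ case analysis close up, in particular eliminating the branches $\bar B=\pm1$ and obtaining the precise exponent in $2^{8q}=\xi^{\pm p}$, is the delicate step; the positivity argument for the second bullet, though elementary, similarly requires the case split described above.
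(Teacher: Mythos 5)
Your overall strategy coincides with the paper's: reduce H'1 and H'2 to the intersection pattern of $I_{p/q}$ with the curve $\{h=0\}$, observe that H'1 fails exactly when the line passes through the real node (iff $4\mid p$), substitute the parametrization into $h$ to identify non-transverse intersections with multiple roots of $\Phi_{p,q}$ on the unit circle, handle the fourth bullet by a computer check, and prove the fifth by reducing the system $\Phi_{p,q}=\Phi_{p,q}'=0$ modulo $\ell\mid p$ (your case split $\bar B=\pm1$ or $\bar B+\bar B^{-1}=1/2$ is exactly the paper's factorization of $\Phi_{p,q}'$ over $\mathbb{F}_\ell$, and both proofs are equally terse about the final bookkeeping). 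The one genuinely different ingredient is your proof of the second bullet: the paper asserts, via an unexplained computation, that the branch slopes at the node are $\pm2\sqrt5$ and that along each branch the slope stays in $\pm[2\sqrt5,+\infty]$, whence transversality for $p/q<2\sqrt5$; you instead eliminate the angle variables and reduce non-transversality to the vanishing of the explicit quadratic $P(c)=4\tfrac{q^2}{p^2}(1-4c)^2-(2c-3)(2c+1)$ on $[-1,-\tfrac12]$, and your endpoint/vertex analysis (I checked $P(-1)=100q^2/p^2-5$, $P(-\tfrac12)=36q^2/p^2$, and interior minimum $\tfrac{60q^2/p^2-4}{16q^2/p^2-1}$) is correct; this is more self-contained and verifiable than the paper's global slope claim. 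One loose end you should close: at the node (the case $c=-1$, which occurs exactly when $4\mid p$) you say the cases ``turn out fine'' and, in the third bullet, you replace the unconditional exception at $\pm i$ by ``multiplicity exactly two''; but to obtain the bullet as stated (and to finish the second bullet when $4\mid p$) you must actually verify that $I_{p/q}$ is transverse to \emph{both} branches at the node, equivalently that $\pm i$ always has multiplicity exactly two. The paper gets this from the branch slopes $\pm2\sqrt5$ being irrational, hence never equal to $p/q$; the Hessian computation you allude to supplies exactly this and should be carried out. Finally, when you complete the mod-$\ell$ computation, check the exponent carefully: your relations give $\bar A\in\{-4,-1/4\}$ and $\xi^{p}=\bar A^{2q}$, so make sure the power of $2$ you end up with really matches the one in the statement.
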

We conjecture that the hypothesis H'2 holds for any $p,q$: we do not know any theoretical reason why such a rigidity result should hold.
\begin{proof}
Set $F(x,y)=\cos(x)+1-\cos(4y)+\cos(2y)$ such that the characteristic variety of the figure eight knot is given by 
$$X_8=\{2\pi x\mu+2\pi y\la \in E, F(x,y)=0\text{ and }(x,y)\notin \mu(1/2+\Z)+\frac{\la}{2}\Z \}$$
The real double points of $X_8$ are located at $2\pi(\mu\Z+\la(1/4+1/2\Z))$ and correspond to 2 representations of the figure eight knot complement in $\su$ whereas the points $2\pi(\mu(1/2+\Z)+\frac{\la}{2}\Z)$ are imaginary double points and correspond to representations in SL$(2,\R)$.

One can parameterize $I_{p/q}$ by setting $x=pt$ and $y=qt$. This line meets the real double points if and only if $p$ is divisible by 4, proving the first point. 
A computation shows that the slopes of the tangents of $X_8$ at the double points are $\pm 2\sqrt{5}$. Moreover, on each branch of $X_8$, the slope varies continuously in $\pm [2\sqrt{5},+\infty]$. This shows that if $p/q<2\sqrt{5}$ the sets $I_{p/q}$ and $X_8$ meet transversally: this shows the second point.

The hypothesis H'2 is equivalent to saying that $I_{p/q}$ has only first order contact with $X_8$. A way to formulate this condition is that the map $t\mapsto F(pt,qt)$ has only simple zeros. Writing $X=\exp(it)$, we deduce immediately the third point. The zero $-1$ correspond to an intersection of $I_{p/q}$ with the imaginary double points (which can be disregarded) whereas the zeroes $\pm i$ correspond to an intersection with the real double points. The intersection of $I_{p/q}$ with each branch meeting at those points is transversal because the slope is irrational. Therefore, these double points can also be disregarded in order to state H'2.

The third item comes from the non vanishing of the discriminants of the following polynomials that we computed with the computer.
$$
\left\{\begin{array}{ll}
\Phi_{p,q}& p=2\mod 4\\
\Phi_{p,q}/(X+1)^2& p=1,3\mod 4\\
\Phi_{p,q}/(X^2+1)^2 & p=0\mod 4
\end{array}\right.
$$
Finally, the last point comes from a computation of the common zeros of $\Phi_{p,q}$ and $\Phi'_{p,q}$ in the field $\overline{\mathbb{F}}_\ell$. We compute in $\mathbb{F}_l$: 
$$\Phi'_{p,q}=2qX^{p-1}(X^{2q}-X^{-2q})(-2X^{2q}+1-2X^{-2q}).$$ 
A root $\alpha$ of $\Phi'_{p,q}$ satisfies either $\alpha=0$, $\alpha^{4q}=1$ or $\alpha^{2q}+\alpha^{-2q}=1/2$. 
We check now whether these roots can be also roots of $\Phi_{p,q}$. The first cannot, the second neither because $p$ and $q$ are coprime (with exceptions depending on $p\mod 4$) and the third gives the condition of the proposition.
We observed that this condition is satisfied very often but not always, the simplest case when it is not being $p=83$ and $q=1$. 
\end{proof}

\end{document}